\documentclass[11pt]{amsart}
\usepackage[usenames,dvipsnames]{pstricks}
\usepackage[mathscr]{eucal}
\usepackage{amsfonts,amsmath,amssymb,amsthm,amscd
	,amsxtra}
\usepackage{enumerate,verbatim}
\usepackage[all,2cell,ps]{xy}
\usepackage[notcite, notref]{}
\usepackage[pagebackref]{hyperref}
\usepackage{todonotes}
\usepackage{calc,color}
\usepackage{wasysym}
\usepackage{mathptmx}
\theoremstyle{plain} 

\newtheorem{thm}{Theorem}[section]
\newtheorem{cor}[thm]{Corollary}
\newtheorem{lem}[thm]{Lemma}
\newtheorem{nota}[thm]{Notation}

\newtheorem{prop}[thm]{Proposition}
\newtheorem{defn}[thm]{Definition}

\newtheorem{exam}[thm]{Example}
\newtheorem{rem}[thm]{Remark}
\newtheorem{ques}[thm]{Question}

\newcommand{\Ann}{\mbox{Ann}\,}

\newcommand{\Hom}{\mbox{Hom}\,}
\newcommand{\Ext}{\mbox{Ext}\,}
\newcommand{\e}{\mbox{e}\,}

\newcommand{\Tor}{\mbox{Tor}\,}

\newcommand{\Spec}{\mbox{Spec}\,}

\newcommand{\Ass}{\mbox{Ass}\,}

\newcommand{\Supp}{\mbox{Supp}\,}

\newcommand{\depth}{\mbox{depth}\,}
\renewcommand{\dim}{\mbox{dim}\,}

\newcommand{\q}{\mbox{q}\,}

\newcommand{\Tr}{\mbox{Tr}\,}

\newcommand{\redu}{\mbox{r}\,}

\newcommand{\re}{\mbox{red}\,}

\newcommand{\dep}{\mbox{depth}\,}

\newcommand{\lo}{\longrightarrow}
\newcommand{\su}{\subseteq}

\newcommand{\fm}{\mathfrak{m}}

\newcommand{\fq}{\mathfrak{q}}

\newcommand{\C}{C}

\begin{document}
\bibliographystyle{amsplain}


\title[Ulrich modules : decomposition and relation with rings having minimal multiplicity]
 {Ulrich modules : decomposition and relation with rings having minimal multiplicity}

\bibliographystyle{amsplain}

     \author[M. Rahimi]{Mehran Rahimi}

	\email{mehran\_rahimy@yahoo.com}
	
	\email{mehran.rahimy65@gmail.com}

\keywords{Ulrich modules, minimal multiplicity, canonical reduction, almost Gorenstein  ring, canonical module, decomposable modules and ideals }

\maketitle

\begin{abstract}
 Some basic properties of Ulrich modules are studied and it is proved that  Ulrich modules in Cohen-Macaulay local rings are decomposable. As a result, some characterizations proved for rings having minimal multiplicity, and some properties for ideals which are Ulrich as module is discussed.
\end{abstract} 

\section{Introduction}

\emph{Ulrich modules} are first introduced in \cite{BHU} as \emph{maximally generated maximal Cohen-Macaulay modules}. When the base ring is local, maximally generated maximal Cohen-Macaulay modules defined as maximal Cohen-Macaulay modules with the same multiplicity, with respect to maximal ideal, and minimal number of generators. Also these modules are known as the name \emph{linear maximal Cohen-Macaulay modules}.  These modules  are less known. For example, it is not known that if every Cohen-Macaulay local ring has a maximal Cohen-Macaulay Ulrich module or not. There is a good convention on some basic properties of Ulrich modules in \cite[Chapter 2]{AGR}. Also Authors try to investigate properties of Ulrich modules in \cite{GOTWY1} and classify them over two-dimensional rational singularities in \cite{GOTWY2} and over minimal multiplicities in  \cite{KT}.

In chapter 2, one investigate some basic properties, examples and remarks about  Ulrich modules. We focus on a special case : maximal-primary ideals in one-dimensional Cohen-Macaulay local rings, as they are maximal Cohen-Macaulay as $R$-modules.

In chapter 3 author prove some fundamental lemmas about minimal generating set of Ulrich modules and finally gives a decomposition for Ulrich modules, which is the main purpose of this paper :

\begin{thm}\ref{main}Assume that $(R,\fm)$ is  a local ring,  $C$ is an Ulrich $R$-module and $P_1, \cdots , P_s$, where $s>1$, are associated primes of $C$. Then   $C=\oplus_{i=1}^s\Gamma_{P_i}(C)$.
\end{thm}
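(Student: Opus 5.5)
The plan is to compare the submodule $N:=\sum_{i=1}^s\Gamma_{P_i}(C)$ with $C$, where $\Gamma_{P_i}(C)$ is the submodule of elements of $C$ killed by a power of $P_i$.

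\emph{Step 1: the sum is direct.} Since $C$ is maximal Cohen--Macaulay (being Ulrich), $\Ass C=\{P_1,\dots,P_s\}$ consists exactly of the minimal primes of $\Supp C$, all with $\dim R/P_i=d=\dim C$. Each $\Gamma_{P_i}(C)$ is a submodule of $C$, and an associated prime of it contains $P_i$, so it lies in $\Ass C\cap V(P_i)=\{P_i\}$. Hence $\Ass\Gamma_{P_i}(C)=\{P_i\}$ whenever this module is nonzero. Now put $X=\Gamma_{P_i}(C)\cap\sum_{j\neq i}\Gamma_{P_j}(C)$. Its associated primes lie in $\{P_i\}\cap\{P_j: j\neq i\}=\emptyset$, so $X=0$. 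Therefore $N=\oplus_{i=1}^s\Gamma_{P_i}(C)$.

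\emph{Step 2: $C/N$ has small dimension.} Localize at $P_j$. The module $C_{P_j}$ has finite length, so every element of it is killed by a power of $P_jR_{P_j}$. This gives $\Gamma_{P_j}(C)_{P_j}=C_{P_j}$, hence $(C/N)_{P_j}=0$ for all $j$. Consequently $\dim C/N<d$. Additivity of multiplicity on $0\to N\to C\to C/N\to 0$ then gives
$$e(C)=e(N)=\sum_i e(\Gamma_{P_i}(C)).$$

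\emph{Step 3: $N=C$, the main obstacle.} Here the Ulrich property must be used. I would pass (after the standard residue-field extension) to a minimal reduction $\fq=(x_1,\dots,x_d)$ of $\fm$ generated by a $C$-regular sequence, for which Ulrichness means $\fm C=\fq C$. By Nakayama it then suffices to show $C=N+\fq C$: this gives $C/N=\fq(C/N)$, hence $C/N=0$.

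To get $C=N+\fq C$, I would first try a counting argument. The quotient $C/\fq C$ is a $k$-vector space of dimension $\mu(C)=e(C)$. The image of $N$ in $C/\fq C$ has dimension at least $\sum_i\mu(\Gamma_{P_i}(C))$ minus whatever is lost in $N\cap\fq C$.

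Key sub-claim: each $\Gamma_{P_i}(C)$ is again maximal Cohen--Macaulay, and then automatically Ulrich. If $\Gamma_{P_i}(C)$ is maximal Cohen--Macaulay, then $\mu(\Gamma_{P_i}(C))\le e(\Gamma_{P_i}(C))$. Also, by additivity over the disjoint supports, $\Gamma_{P_i}(C)\cap\fq C=\fq\Gamma_{P_i}(C)$. Together these force the image of $N$ to fill $C/\fq C$.

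To prove the sub-claim, I would use that $\Gamma_{P_i}(C)=\ker\bigl(C\to\prod_{j\ne i}C_{P_j}\bigr)$. I would try to show that $C/\Gamma_{P_i}(C)$ has positive depth, so that the depth lemma applied to $0\to\Gamma_{P_i}(C)\to C\to C/\Gamma_{P_i}(C)\to0$ pushes depth up. If this fails in general, the fallback is a direct element-wise argument. Choose, by prime avoidance, $a_i\in\bigcap_{j\neq i}P_j\setminus P_i$. Then try to show, using $\fm C=\fq C$, that every $c\in C$ can be written modulo $\fq C$ as a sum of components killed by powers of the $P_i$.

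I expect this last step — getting the equality $C=N+\fq C$ out of the Ulrich condition — to be the genuine difficulty. Steps 1 and 2 are formal.
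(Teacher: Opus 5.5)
Your Steps 1 and 2 are correct, but Step 3 has a gap that cannot be closed. The step that fails is ``together these force the image of $N$ to fill $C/\fq C$''. Grant that every $\Gamma_{P_i}(C)$ is Ulrich, so that $\sum_i\mu(\Gamma_{P_i}(C))=\sum_i e(\Gamma_{P_i}(C))=e(C)=\ell(C/\fq C)$. You would still need the map $\bigoplus_i\Gamma_{P_i}(C)/\fq\Gamma_{P_i}(C)\lo C/\fq C$ to be injective, i.e.\ $N\cap\fq C=\fq N$. The equalities $\Gamma_{P_i}(C)\cap\fq C=\fq\Gamma_{P_i}(C)$ give injectivity on each summand separately. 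They say nothing about the images of different summands overlapping.

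In dimension one this is not a side issue but the whole theorem. Indeed, $N\cap xC=xN$ holds iff $x$ is a non-zero-divisor on $C/N$. Since $C/N$ has finite length by your Step 2, this happens iff $C=N$. The paper's proof has the same hole. It passes from Corollary~\ref{genGamma}, which concerns one $\Gamma_P(C)$ at a time, to the claim that the whole sum $\oplus_i\Gamma_{P_i}(C)$ is part of a minimal generating set of $C$. It then obtains $\Tor_1^R(R/\fq,C/G)=0$ ``as in Lemma~\ref{Uldep}'', which is again exactly this injectivity.

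In fact no argument can succeed, because the statement is false. Let $k$ be an infinite field, $R=k[[x,y]]/(y(y-x^2))$ and $C=\fm$.
Since $y^2=x^2y$ in $R$, we get $\fm^2=(x^2,xy)=x\fm$, so $\fm$ is Ulrich with $\mu(\fm)=2=e(R)$. Moreover $R$ is Cohen--Macaulay, so even the extra hypothesis $\depth R\geq\dim C$ of the paper's Section 3 version holds.
Here $\Ass(\fm)=\{yR,(y-x^2)R\}$. Computing in the UFD $k[[x,y]]$ gives $\Gamma_{yR}(\fm)=(y-x^2)R$ and $\Gamma_{(y-x^2)R}(\fm)=yR$. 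Both are isomorphic to $k[[x]]$, hence Ulrich, so your sub-claim holds and the multiplicities add up correctly.
Yet both generators map to $\bar y$ in $\fm/x\fm=\fm/\fm^2$, and $N=(y,x^2)R\neq\fm$ since $x\notin N$. The same example also contradicts the paper's corollary that minimal multiplicity with $\Ass R$ not a singleton forces $\fm$ to be decomposable.

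Conceptually, $C=\oplus_i\Gamma_{P_i}(C)$ means exactly that $C$ is stable under the idempotents of the total quotient ring of $R/\Ann C$. Here the two branches are tangent. So the blow-up $R[\fm/x]$, over which $C$ is a module because $\fm C=xC$, is still local and contains no such idempotents. The theorem therefore needs an additional hypothesis of this kind.
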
 

This Theorem can reduce study of Ulrich modules to the case that base ring is a domain, or has just one associated prime.
As a result, one can see that if $R$ is a one-dimensional Cohen-Macaulay local ring then powers of maximal ideal are decomposable from somewhere if $|\Ass(R)|>1$. In particular if $R$ has minimal multiplicity such that $\Ass(R)$ is not singleton then $R$ has decomposable maximal ideal. This result, and other facts  comes in chapter three. 

chapter four discuss about some results cause by our main Theorem. First, some small facts about \emph{generalized Ulrich modules}, and then some notations about ideals in  one-dimensional Cohen-Macaulay local rings which are Ulrich as $R$-modules. We introduce a new ideal $\Gamma_I := \oplus_{P\in\Ass(R)}\Gamma_{P}(R)$ and use it to characterize rings with minimal multiplicity:  

\begin{thm}\ref{mainGamma}
		Assume that $(R,\fm)$ is a one-dimensional Cohen-Macaulay local ring, with infinite residue field, such that $\Ass(R)$ is not singleton. The following conditions are equivalent:
	\begin{itemize}
		\item[(1)] $R$ has minimal multiplicity.

		\item[(2)] There exists prime ideals $P,T\in\Ass(R)$ which are Ulrich as $R$-modules.
		
		\item[(3)] $\Gamma_P(R)$ is Ulrich as an $R$-module for all $P\in\Ass(R)$. 
	\end{itemize}
	When this is the case, $R$ has decomposable maximal ideal
\end{thm}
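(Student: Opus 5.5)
The plan is to prove the cycle $(1)\Rightarrow(3)\Rightarrow(2)\Rightarrow(1)$. Three standard facts will be used throughout. First, since $\dim R=1$, every nonzero ideal of $R$ that is not contained in a minimal prime has depth one, so it is maximal Cohen--Macaulay. Second, a direct summand of an Ulrich module is Ulrich, because $\mu$ and $e$ are additive on direct sums and $\mu\le e$ holds for each summand. Third, with infinite residue field, a maximal Cohen--Macaulay module $C$ is Ulrich exactly when $\fm C=xC$ for a minimal reduction $x$ of $\fm$. In dimension one, minimal multiplicity means $\mu(\fm)=e(R)$, and $e(\fm)=e(R)$ since $\fm$ has rank one wherever $R$ does. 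Hence $R$ has minimal multiplicity if and only if $\fm$ is Ulrich as an $R$-module.

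\emph{$(1)\Rightarrow(3)$.} By the remark above, $\fm$ is Ulrich, and $\Ass(\fm)=\Ass(R)$ has more than one element. Theorem \ref{main} then gives $\fm=\bigoplus_{P\in\Ass(R)}\Gamma_P(\fm)$, so each $\Gamma_P(\fm)$ is Ulrich as a direct summand. It remains to check that $\Gamma_P(\fm)=\Gamma_P(R)$, i.e.\ that $\Gamma_P(R)\subseteq\fm$. If $\Gamma_P(R)$ contained a unit, then $P^n=0$ for some $n$, so $P$ would lie in every prime. Since $R$ is Cohen--Macaulay, $\Ass(R)=\operatorname{Min}(R)$, so $P$ would be the only associated prime, contradicting that $\Ass(R)$ is not a singleton. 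This argument also gives the final assertion: once (1) holds, the decomposition $\fm=\bigoplus_{P}\Gamma_P(R)$ has at least two nonzero summands, so $\fm$ is decomposable.

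\emph{$(3)\Rightarrow(2)$.} Here I would identify associated primes with the modules $\Gamma_Q(R)$. In the decomposition above, for $P\ne Q$ one has $\Gamma_Q(R)\subseteq P$: indeed $\Gamma_Q(R)$ is killed by $Q^n\not\subseteq P$, and $P$ is prime. So I would try to show that $P=\bigoplus_{Q\ne P}\Gamma_Q(R)\oplus(P\cap\Gamma_P(R))$ and that the last term vanishes or is itself an Ulrich summand. Localizing at $P$, the ring $R_P$ is Artinian and $\Gamma_P(R)_P=R_P$, which should control $P\cap\Gamma_P(R)$. Then $P$ is Ulrich as a sum of Ulrich summands, and choosing two distinct primes gives $P$ and $T$.

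\emph{$(2)\Rightarrow(1)$, the main obstacle.} Suppose $P\ne T$ are associated primes that are Ulrich. Then $\fm P=xP$ and $\fm T=xT$, hence $\fm(P+T)=x(P+T)$. Moreover $P+T$ is $\fm$-primary, since a minimal prime containing both would force $P=T$. So $P+T$ is an $\fm$-primary Ulrich ideal. The difficulty is passing from this ideal to $\fm$ itself, since $\fm^n$ is Ulrich for large $n$ in any such ring. What I would try first is to show $P+T=\fm$ when $\Ass(R)=\{P,T\}$, and in general to compare $\mu(\fm)$ with $\mu(P)+\mu(T)=e(P)+e(T)$. For the latter, use additivity of multiplicity over the minimal primes, $e(R)=\sum_Q \ell(R_Q)\,e(R/Q)$, together with the fact that $P$ and $T$ have support on complementary sets of minimal primes. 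The aim is to get $\mu(\fm)\le e(R)$ with equality, which yields minimal multiplicity and closes the cycle.
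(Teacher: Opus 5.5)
\textbf{There is a genuine gap, and it cannot be repaired.} Your steps $(3)\Rightarrow(2)$ and $(2)\Rightarrow(1)$ cannot be completed, because both implications are false. The paper's own proof breaks at the same point: its step (v)$\Rightarrow$(i) relies on $\mu((\Gamma_R+xR)/xR)=\mu(\Gamma_R)$, which is false.

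For $(3)\not\Rightarrow(2)$ and $(3)\not\Rightarrow(1)$, take $R=k[[x,y]]/(f)$ with $f=xy(x-y)$ and $k$ infinite.
\begin{itemize}
\item For $P=\lambda R$ with $\lambda\in\{x,y,x-y\}$ one has $\Gamma_P(R)=(f/\lambda)R\cong R/\lambda R\cong k[[t]]$. So (3) holds.
\item However $e(R)=3>2=\mu(\fm)$, so (1) fails.
\item No $P$ is Ulrich, since $P=\lambda R\cong R/(f/\lambda)$ has $\mu=1<2=e$. So (2) fails.
\item Your proposed identity $P=\bigoplus_{Q\neq P}\Gamma_Q(R)\oplus(P\cap\Gamma_P(R))$ also fails. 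For $P=xR$ the right-hand side is $(x(x-y),xy)=x\fm\neq xR$.
\item This example also breaks the paper's step. Here $\Gamma_R=\fm^2$, whose image in $R/\ell R\cong k[t]/(t^3)$ ($\ell$ a general linear form) is one-dimensional, not of dimension $\mu(\Gamma_R)=3$.
\end{itemize}

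For $(2)\not\Rightarrow(1)$, let $A=k[[t^3,t^4]]$ and $R=\{(f,g)\in A\times A: f-g\in t^6k[[t]]\}$. This is the amalgamated duplication $A\bowtie\fm_A^2$, a one-dimensional reduced Cohen--Macaulay local ring.
\begin{itemize}
\item Its minimal primes $0\times\fm_A^2$ and $\fm_A^2\times 0$ are isomorphic, via the projections, to $\fm_A^2=t^6k[[t]]$.
\item This module is Ulrich because $\fm_A\fm_A^2=t^3\fm_A^2$. So (2) and (3) hold.
\item Yet $R$ does not have minimal multiplicity. The condition $\fm^2=x\fm$ would pass to $R/P\cong A$, whereas $e(A)=3>2=\mu(\fm_A)$. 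Directly, $\mu(\fm_R)=5<6=e(R)$.
\end{itemize}
So no comparison of $\mu(\fm)$ with $\mu(P)+\mu(T)$ can give $\mu(\fm)=e(R)$.

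\textbf{Your forward step is also unsound as written}, because Theorem \ref{main} fails, and the ``decomposable maximal ideal'' conclusion is false. Take the tacnode $R=k[[x,y]]/(y(y-x^2))$.
\begin{itemize}
\item Here $\fm^2=x\fm$, so (1) holds.
\item But $\Gamma_{(y)}(R)=(y-x^2)R$ and $\Gamma_{(y-x^2)}(R)=yR$ sum to $(y,x^2)\subsetneq\fm$. So Theorem \ref{main} fails for $C=\fm$.
\item This also shows that $P+T=\fm$ need not hold even under (1).
\item Moreover $\fm$ is indecomposable. Since $\mu(\fm)=2$, a decomposition would read $\fm=aR\oplus bR$ with $ab=0$. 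Then $a,b$ would lift to a regular system of parameters $a',b'$ of $k[[x,y]]$, with $a'b'$ a unit multiple of $y(y-x^2)$. This is impossible, since the initial form $y^2$ is not a product of two independent linear forms.
\end{itemize}

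What survives is $(1)\Rightarrow(2)$ and $(1)\Rightarrow(3)$, and these follow without Theorem \ref{main}. Suppose $\fm$ is Ulrich, and let $N$ be either $P$ or $\Gamma_P(R)$ for some $P\in\Ass(R)$.
\begin{itemize}
\item $N$ is nonzero and properly contained in $\fm$, because $|\Ass(R)|>1$.
\item $R/N$ has positive depth. For $N=\Gamma_P(R)$ this holds because $\Ass(R/\Gamma_P(R))=\Ass(R)\setminus\{P\}$.
\item Hence $\fm/N$ is a nonzero submodule of a depth-one module, so it is one-dimensional Cohen--Macaulay.
\item Proposition \ref{PropMuType}(i) then shows that $N$ is Ulrich.
\end{itemize}
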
  

Next one is characterization with first syzygy of canonical module. Remember that, $R$ is almost Gorenstein if possessed a canonical module $K$ and exact sequence $0\lo R \lo K \lo c \lo 0$ such that $C$ is Ulrich, see \cite{AGR}. Note that $\dim(C) = \dim(R)-1$ this case. Next we investigate some similar exact sequence for rings having minimal multiplicity :  
\begin{thm}\ref{Uprop}
		Assume that $(R,\fm)$ is a $d$-dimensional $(d>0)$ Cohen-Macaulay local ring with infinite residue field, and that $R$ admits a canonical module $K$. The following conditions are equivalent.
	\begin{itemize}
		\item [(i)] $R$ has minimal multiplicity.
		\item [(ii)] $\Omega^1(K)$ is Ulrich.
		\item [(iii)] There exist natural number $t$ and exact sequence $0 \lo R \lo K^{t} \lo D \lo 0$ such that $D$ is maximal Cohen-Macaulay Ulrich module.
	\end{itemize}
when this is the case $t = \redu(R)$.
\end{thm}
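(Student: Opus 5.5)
The plan is to reduce everything to the Artinian ring $A=R/Q$, where $Q$ is a minimal reduction of $\fm$ generated by a system of parameters. Such a $Q$ exists because the residue field is infinite, and it is generated by an $R$-regular sequence that is also regular on every maximal Cohen-Macaulay module. I will use the standard criterion that a maximal Cohen-Macaulay module $M$ is Ulrich if and only if $\fm M=QM$, i.e. $\fm_A(M/QM)=0$. I will also use that $R$ has minimal multiplicity if and only if $\fm^2=Q\fm$, i.e. $\fm_A^2=0$.

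\textbf{Setup.} Let $r=\redu(R)=\mu(K)$ and take a minimal presentation $0\lo \Omega\lo R^{r}\lo K\lo 0$, with $\Omega=\Omega^1(K)$. By the depth lemma $\Omega$ is maximal Cohen-Macaulay, since $d>0$. Reducing modulo $Q$ gives $0\lo \Omega_A\lo A^r\lo E\lo 0$ with $\Omega_A\subseteq \fm_A A^r$. Here $K/QK\cong E=E_A(k)$, and the sequence stays exact because $Q$ is $K$-regular. Applying $\Hom_R(-,K)$ and using $\Ext^1_R(K,K)=0$ gives $0\lo R\lo K^r\lo \Omega^\vee\lo 0$ with $\Omega^\vee=\Hom_R(\Omega,K)$. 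The dual of a maximal Cohen-Macaulay Ulrich module is again Ulrich, because $\Omega^\vee/Q\Omega^\vee\cong \Hom_A(\Omega_A,E)$ and a module is killed by $\fm_A$ if and only if its Matlis dual is. Hence (ii)$\Rightarrow$(iii) with $t=r$ and $D=\Omega^\vee$.

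\textbf{(i)$\Rightarrow$(ii).} If $\fm_A^2=0$, then $\fm_A\Omega_A\subseteq \fm_A^2A^r=0$, so $\Omega$ is Ulrich.

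\textbf{(iii)$\Rightarrow$(ii).} Dualize (iii) with respect to $K$. Since $\Ext^1_R(D,K)=0$, $\Hom_R(K,K)=R$ and $\Hom_R(R,K)=K$, this yields $0\lo D^\vee\lo R^t\lo K\lo 0$ with $D^\vee$ Ulrich. A non-minimal presentation differs from a minimal one by a free summand, so $D^\vee\cong \Omega\oplus R^{t-r}$. A direct summand of an Ulrich module is Ulrich, since the condition $\fm M=QM$ passes to summands. Thus $\Omega$ is Ulrich. Moreover $t=r$ unless $R$ itself is Ulrich, i.e. $e(R)=1$; in that case $R$ is regular and trivially has minimal multiplicity. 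This same comparison is what gives $t=\redu(R)$ in the final claim.

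\textbf{(ii)$\Rightarrow$(i): the main obstacle.} It is cleanest to work with the dual sequence modulo $Q$. This is $0\lo A\xrightarrow{\ \cdot v\ } E^r\lo D_A\lo 0$, where $v=(e_1,\dots,e_r)$ and $e_1,\dots,e_r$ minimally generate $E$. By assumption $\fm_A D_A=0$, so $\fm_A E^r\subseteq Av$. Since $v\notin\fm_A E^r$ (its coordinates are minimal generators), the only multiples of $v$ in $\fm_AE^r$ are those by non-units, so $\fm_AE^r\subseteq \fm_Av$. Comparing lengths, with $\ell(\fm_AE)=\ell(E)-\mu(E)=\ell(A)-r$ and $\ell(\fm_A v)=\ell(A)-1$ (the map $a\mapsto av$ is injective), we get
\[
r(\ell(A)-r)\le \ell(A)-1, \quad\text{i.e.}\quad (r-1)\ell(A)\le r^2-1 .
\]
So $\ell(A)\le r+1$ when $r\ge 2$. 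On the other hand, if $\fm_A^2\neq 0$ then $\operatorname{soc}(A)\subsetneq\fm_A$, so $r=\dim_k\operatorname{soc}(A)\le \ell(A)-2$, a contradiction. Hence $\fm_A^2=0$.

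The delicate point is the Gorenstein case $r=1$, where $\Omega=0$. Here I would either adopt the convention that the zero module is not Ulrich, or note that the statement must then be read with $r\ge 2$, since a Gorenstein ring need not have minimal multiplicity. I expect this length-counting step, and correctly handling that degenerate case, to be the crux of the proof.
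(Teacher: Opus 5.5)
Your argument is correct once $R$ is assumed non-Gorenstein, and it takes a partly different route from the paper. The paper simply asserts that one may assume $\dim R=1$, and then argues numerically. In Lemma~\ref{Ulem}(b), for the dual sequence $0\lo R\lo K^r\lo \Omega^\vee\lo 0$ with $\Omega^\vee=\Hom_R(\Omega^1K,K)$, it shows that the image of $1$ is a minimal generator of $K^r$, so $\mu(\Omega^\vee)=r^2-1$. Additivity then gives that the multiplicity of $\Omega^\vee$ is $(r-1)e$. Ulrichness therefore becomes the equation $r^2-1=(r-1)e$, and $t=r$ comes from a stable-isomorphism argument modulo a superficial element.

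You instead reduce modulo a full minimal reduction $Q$ of $\fm$. This treats every $d$ at once, and so actually supplies the dimension reduction the paper omits. Your (i)$\Rightarrow$(ii) is structural ($\Omega_A\subseteq \fm_A A^r$ together with $\fm_A^2=0$) rather than a count. Your length comparison in (ii)$\Rightarrow$(i) is the paper's identity in disguise, since $\ell(\fm_A D_A)=r(e-r)-(e-1)=(r-1)(e-r-1)$. Your treatment of $t$, splitting $R^{t-r}$ off a non-minimal presentation, is cleaner than the paper's. It also correctly identifies the regular case as the only one in which $t\neq r$ is possible.

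On the case $r=1$: you are right that this is a real defect of the statement. The paper's proof overlooks it, because its step (iii)$\Rightarrow$(i) cancels the factor $r-1$ in $r^2-1=e(r-1)$. However, only your second remedy works.
\begin{itemize}
\item If the zero module is declared non-Ulrich, then any regular ring, or $k[[x,y]]/(xy)$ with $k$ infinite, has minimal multiplicity while $\Omega^1K=0$ is not Ulrich. So (i)$\Rightarrow$(ii) fails.
\item If the zero module is declared Ulrich, then a Gorenstein ring of multiplicity at least $3$, such as $k[[t^3,t^4]]$, satisfies (ii) and (iii) (take $t=1$, $D=0$) but not (i).
\end{itemize}
So the theorem must be stated for non-Gorenstein $R$, i.e.\ $r\geq 2$. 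In that range your proof is complete, and it also yields $t=\redu(R)$, since $R$ is then not regular.
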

 Author end chapter by a proposition about rings with canonical reductions, whom admits ideals which are Ulrich as $R$-modules, and some open questions rises from discussions of this paper.

In this paper by local we mean  a Noetherian local ring and all modules are finitely generated. Let $(R,\fm)$ be a local ring. For an $R$-module $M$, the symbols $\e_{\fm}^0(R)$ and $\Gamma_I(M)$  denotes respectively the multiplicity, with respect to $\fm$, and the local cohomology module of $M$ with respect to ideal $I$.

\section{Preliminaries}

In the rest of this section $(R,\fm)$ is a local ring such that $\dim(R) = d$.

\begin{defn}\emph{Let $M$ be an finitely generated $R$-module. Then $M$ is said to be Ulrich if $M$ is Cohen-Macaulay and $\mu(M) = \e^0_{\fm}(M)$.  }
\end{defn}

If $\dim(M) = 0$ then $M$ is Ulrich if and only if $\fm M = 0$. If $\dim(M) = s > 0$ and residue field of $R$ is infinite, then $M$ is Ulrich if and only if $\fq M = \fm M$ for some parameter $\fq$ of $M$. One can show that, image of such parameter in $R/\Ann(M)$ generates a minimal reduction for maximal ideal (see \cite[Proposition 2.2.]{AGR}). In the rest of this chapter we assume that the residue field of $R$ is infinite.

First author shows a simple equivalence in definition of Ulrich modules, which may be useful.

\begin{lem}\label{FirstLemma}
	Assume that $(R,\fm)$ is a Cohen-Macaulay local ring.  Consider the followings for a Cohen-Macaulay $R$-module $M$ :
	\begin{itemize}
		\item[(i)] $\mu(M)=\e_{\fm}^0(M)$.
		\item[(ii)] $\e_{\fm}^0(M)=\redu(M)$.
		\item[(iii)] $\redu(M)=\mu(M)$. 
	\end{itemize}
	Then $(i)\Leftrightarrow(ii)\Rightarrow(iii)$. 
\end{lem}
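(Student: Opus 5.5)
The plan is to compare all three invariants inside a single Artinian module $M/\fq M$, where $\fq$ is a suitably chosen parameter ideal for $M$. Throughout, $\redu(M)$ is the Cohen--Macaulay type of $M$, namely $\dim_k \Ext^s_R(k,M)$ with $s=\dim M$. For a Cohen--Macaulay module this equals $\ell\bigl((\fq M:_M \fm)/\fq M\bigr)$, the length of the socle of $M/\fq M$, for any ideal $\fq$ generated by a system of parameters of $M$.

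\textbf{Step 1: choose $\fq$.} Since the residue field is infinite, I take $\fq=(x_1,\dots,x_s)$ with $x_1,\dots,x_s\in\fm$ whose images in $R/\Ann(M)$ generate a minimal reduction of $\fm(R/\Ann(M))$. Then $\fq$ is a parameter ideal for $M$ and $\fq$ is a reduction of $\fm$ with respect to $M$, so $\e_{\fm}^0(M)=\e_{\fq}^0(M)$. Because $M$ is Cohen--Macaulay, this gives $\e_{\fm}^0(M)=\ell(M/\fq M)$. If $s=0$, take $\fq=0$; then the statement reduces to comparing $\ell(M)$, $\ell(0:_M\fm)$ and $\ell(M/\fm M)$, and the argument below applies verbatim.

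\textbf{Step 2: record the three numbers and the two inequalities.} With this $\fq$,
\[
\mu(M)=\ell(M/\fm M),\qquad \redu(M)=\ell\bigl(\mathrm{Soc}(M/\fq M)\bigr),\qquad \e_{\fm}^0(M)=\ell(M/\fq M).
\]
Since $M/\fm M$ is a quotient of $M/\fq M$ and $\mathrm{Soc}(M/\fq M)$ is a submodule of it, we have
\[
\mu(M)\le \e_{\fm}^0(M) \quad\text{and}\quad \redu(M)\le \e_{\fm}^0(M).
\]

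\textbf{Step 3: prove the implications.} Both equalities characterize the same condition $\fm M=\fq M$.
\begin{itemize}
\item[] $(i)\Rightarrow(ii)$: equality $\mu(M)=\e_{\fm}^0(M)$ forces $\fm M=\fq M$. Then $M/\fq M$ is annihilated by $\fm$, so it equals its own socle, and hence $\redu(M)=\e_{\fm}^0(M)$.
\item[] $(ii)\Rightarrow(i)$: equality $\redu(M)=\e_{\fm}^0(M)$ means $\mathrm{Soc}(M/\fq M)=M/\fq M$, so $\fm M\subseteq \fq M$. Hence $\fm M=\fq M$, and $\mu(M)=\ell(M/\fq M)=\e_{\fm}^0(M)$.
\item[] $(i)\Rightarrow(iii)$: combine the two previous equalities to get $\redu(M)=\e_{\fm}^0(M)=\mu(M)$.
\end{itemize}

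The only point needing care is Step 1. There I must make sure that a reduction of $\fm$ relative to $M$ exists with exactly $s$ generators, which uses the infinite residue field applied to $R/\Ann(M)$, as in \cite[Proposition 2.2.]{AGR}. I must also make sure the type may be computed modulo this particular $\fq$, which holds for any system of parameters of a Cohen--Macaulay module. Everything else is length counting.
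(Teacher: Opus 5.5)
Your proposal is correct and follows essentially the paper's own proof. Both fix a parameter ideal $\fq$ whose image is a minimal reduction of $\fm$ modulo $\Ann(M)$. Both then read $\e_{\fm}^0(M)$, $\redu(M)$ and $\mu(M)$ off as the lengths of $M/\fq M$, of its socle $(\fq M:_M\fm)/\fq M$, and of its quotient $M/\fm M$, and reduce each equality to $\fm M=\fq M$. The only cosmetic differences are these: you make the inequalities $\mu(M),\redu(M)\le \e_{\fm}^0(M)$ explicit, and you obtain (ii)$\Rightarrow$(iii) by passing through (i) rather than arguing it directly.
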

\begin{proof}
	Choosing a suitable parameter ideal $\fq$ for $M$, which is also a reduction for maximal ideal of $R/\Ann(M)$, one can easily see that $\e_{\fm}^0(M) = M/\fq M$ and $\redu(M) = \Hom_R(R/\fm , M/\fq M) = (\fq M:_M \fm)/\fq M$.
	
	$(i)\Rightarrow(ii)$. By assumption $\fq M = \fm M$ which means $\redu(M)= (\fq M: \fm)/\fq M = (\fm M:_M \fm)/\fm M = M/\fm M = \mu(M) = \e_{\fm}^0(M)$.
	
	$(ii)\Rightarrow(i)$. By assumption $(\fq M: \fm)/\fq M = M/\fq M$ which means $M = (\fq M:_M \fm)$ and so $\fm M = \fq M$.
	
	$(ii)\Rightarrow(iii)$. By assumption $M/\fq M = (\fq M:_M \fm)/\fq M$ which means $M = (\fq M:_M \fm)$ and so $\fm M=\fq M$. Therefore $\redu(M) = M/\fq M = M/\fm M = \mu(M)$.
	
\end{proof}

So, if $M$ is Ulrich, one have $\mu(M)=\e_{\fm}^0(M) =\redu(M)$. If $R$ is a singular Gorenstein ring then $\mu(R) = \redu(R)=1$ where $\e_{\fm}^0(R)>1$. This example shows that all three conditions in above proposition are not equivalent.

\begin{rem}\label{1Rem} \emph{Let $R$ be a one-dimensional Cohen-Macaulay local ring. Then all non-zero ideals are maximal Cohen-Macaulay modules and one may concern when they are Ulrich as an $R$-module and what is the properties. Some of these propertes may use freely in coming chapters. }
\begin{itemize}	
		\item[(1)]  $\fm^n$ is Ulrich as an $R$-module for $n\geq \re(\fm)$.
		\item[(2)] If $I$ is Ulrich then so is trace of $I$, and $JI$ for any  ideal $J$ of $R$.
	\item[(3)] $I$ is $\fm$-primary and Ulrich if and only if $\mu(I) = \e_{\fm}^0(R)$.
	
	\item[(4)] If $I$ is Ulrich then $I$ is $\fm$-full. More precise, $(\fm I : x)=(\fm I : \fm) = I$ for some superficial element $x\in\fm$. Also, if $I\su xR$ then   $I=\fm(I:\fm)$.
	\item[(5)] If $I$ is Ulrich and also reduction of $J$ then $J^n$ is Ulrich for $n>>0$.
	
	\item[(6)] If $I$ is Ulrich and contains a superficial element for $R$ then $I=\fm$.
	
	\item[(7)] Let $K$ be a canonical ideal of $R$ and $I$ an arbitrary ideal such that $K \subseteq I$. Then, $(K:I)$ is Ulrich if and only if $I$ is Ulrich. 
	
	\item[(8)] if $I$ is $\fm$-primary and $\redu(R/I) = \e_{\fm}^0(R)$ then $I$ is Ulrich.
\end{itemize}

\end{rem} 
\begin{proof} Let $x\in \fm$ be a superficial element. Hence, if $I$ is Ulrich as an $R$-module then $\fm I = xI$.
	
$(1)$. If $r=\re(\fm)$ then	$\fm(\fm^n) = x(\fm^n)$ for $n\geq r$, which shows the result.
	
$(2)$. If $y\in I$ be a non-zero-divisor then $I(y:I)=y\Tr_R(I)$. Now	$y\fm\Tr_R(I) = \fm I(y:I) = xI(y:I) = yx\Tr_R(I)$ which follows the result. Also multiplying $J$ to $\fm I=xI$ gives us the Ulrichness of $JI$.

$(3)$. If $I$ is Ulrich as an $R$-module then $\fm I = xI$ and $\mu(I) = \l(I/\fm I) = \l(I/xI) = \e_{\fm}^0(R)$.  Conversely we have $I/\fm I = \mu(I) = \e_{\fm}^0(R) = I/xI$  which means $\fm I = xI$.

$(4)$. First one can see that $(\fm I : x) = (xI : xR) = I$. Then $I \subseteq (\fm I : \fm) = (\fm I : xR) = I$ which shows other qualities.  Also, if $I = xJ$  for some ideal $J$ of $R$ then $\fm(I:\fm) =\fm(xJ:\fm) = \fm(\fm J : \fm) = \fm J = xJ = I$.

$(5)$. Clear as $J^n= IJ^{n+1}$ for $n>>0$. Now multiplying with $\fm$ and $xR$ gives the result.

$(6)$. Let $xI = \fm I$ while $x\in\fm$. Therefore we got $xI \subseteq x\fm \su I\fm = xI$ which means $I=\fm$. 

$(7)$. By \cite[Lemma 2.1]{CanRed} one has $\mu(K:I) = \redu(I)$. Now by $(2)$, $(K:I)$ is Ulrich if and only if $\mu(K:I)= \e_{\fm}^0(R)$, if and only if $\redu(I) = \e_{\fm}^0(R)$, if and only if $I$ is Ulrich by Lemma\ref{FirstLemma}.

$(8)$. Applying $\Hom_R(R/\fm , -)$ over $0\lo I \lo R \lo R/I \lo 0$ we embedding $\Hom_R(R/\fm , R/I) \lo \Ext_R^1(R/\fm , I)$. Therefore, as $\Ext_R^1(R/\fm , I)\cong  \Hom_R(R/\fm , I/xI) \cong (xI:_I \fm)/xI \su I/xI$, one get $\e_{\fm}^0(R) =\redu(R/I) =\l(\Hom_R(R/\fm , R/I))\leq \l(I/xI) = \e_{\fm}^0(R)$. Now the result follows by Lemma\ref{FirstLemma}(ii). 
\end{proof}

 \begin{exam}
 \emph{	Let $(R,\fm)$ be a one-dimensional Cohen-Macaulay local ring. Remark\ref{1Rem} shows that powers of maximal ideal are Ulrich as an $R$-module from somewhere. Multiplying by other ideals and taking dual with canonical module, if exists, gives new Ulrich modules. One famous example is the case $R$ is    analytically unramified ring. Then the integral closure $\bar{R}$ of $R$ is finite as an $R$-module. Set $\C = (R : \bar{R})$, the conductor ideal. Then both $\bar{R}$ and $\C$ are Ulrich modules.    }
 \end{exam}

Next we show some basic facts about Ulrich modules and ideals which are Ulrich as $R$-modules.  

\begin{lem}\label{Lemx}
	Assume that $(R,\fm)$ is a   Cohen-Macaulay local ring with infinite residue field and that $M$ is a one-dimensional  Cohen-Macaulay$R$-module. If $T$ is an Ulrich submodule of $M$, Then  for any non-zero-divisor $x\in\fm$ there exists a sub-module $N$ of $M$ such that $T\subseteq N$, $N\nsubseteq xM$, $N$ is Ulrich  $R$-module and $\mu(N)=\mu(T)$.  
\end{lem}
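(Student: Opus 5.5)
The plan is to take $N$ to be the "saturation" of $T$ with respect to $x$: divide $T$ by the largest power of $x$ that divides it inside $M$. We assume $T\neq 0$ (for $T=0$ the statement is degenerate) and read "non-zero-divisor" as non-zero-divisor on $M$, as the statement implicitly requires. The argument rests on two observations.
\begin{itemize}
\item[(a)] Multiplication by $x^k$ is injective on $M$. Hence a submodule $N\subseteq M$ with $x^kN=T$ satisfies $N\cong T$ as $R$-modules.
\item[(b)] Being Ulrich and the number $\mu$ are isomorphism invariants. So such an $N$ is automatically Ulrich with $\mu(N)=\mu(T)$.
\end{itemize}

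First, if $T\nsubseteq xM$ we simply take $N=T$. Otherwise we look at the set of $k\geq 0$ with $T\subseteq x^kM$. This set is bounded. Indeed, if $T\subseteq x^kM$ for all $k$, then $T\subseteq\bigcap_k x^kM\subseteq\bigcap_k \fm^kM=0$ by the Krull intersection theorem, which contradicts $T\neq 0$. So let $k$ be the largest integer with $T\subseteq x^kM$, and define
$$N=(T:_M x^k)=\{m\in M : x^km\in T\}.$$

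Next we check the required properties of $N$.
\begin{itemize}
\item[(1)] $T\subseteq N$, since $x^kT\subseteq T$.
\item[(2)] $x^kN=T$. The inclusion $\subseteq$ holds by definition. For $\supseteq$, every $t\in T$ can be written $t=x^km$ with $m\in M$, because $T\subseteq x^kM$, and then $m\in N$.
\item[(3)] By (a) and (2), $N\cong T$. Then (b) gives that $N$ is Ulrich with $\mu(N)=\mu(T)$. The Cohen-Macaulay property and the multiplicity $\e_{\fm}^0$ both transfer along the isomorphism.
\item[(4)] $N\nsubseteq xM$. Otherwise $T=x^kN\subseteq x^{k+1}M$, which contradicts the maximality of $k$.
\end{itemize}

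The only step needing real care is the existence of the maximal $k$, that is, the Krull intersection argument. There we must make sure that $T\neq 0$ and that $x\in\fm$, so that $x^kM\subseteq\fm^kM$. Everything else is formal once injectivity of multiplication by $x$ on $M$ is available. The one-dimensionality of $M$ and the infinite residue field are not actually needed for this argument.
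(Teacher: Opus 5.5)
Your proof is correct and is essentially the paper's argument: the paper also divides $T$ repeatedly by $x$, and by $M$-regularity of $x$ its modules $T_i$ are forced to be your $(T:_M x^i)$, each isomorphic to $T$ and hence Ulrich with the same $\mu$. The only difference is how termination is shown. The paper uses the ascending chain $T\subseteq T_1\subseteq T_2\subseteq\cdots$ together with Nakayama to force $T=0$, whereas you bound the exponent directly via the Krull intersection theorem; both are valid.
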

\begin{proof}
If $T\nsubseteq xR$ then there is nothing to proof. Assume that $T\subseteq xM$ and  Let $T=xT_1$ for some submodule $T_1\in M$. By definition of Ulrich modules, it immediately conclude that $T_1$ is Ulrich. If $T_1 \nsubseteq xM$ we are done. Otherwise there exists  $T_2 \subseteq M$ such that $T_1 = xT_2$ and we have chain of submodules $T \subseteq T_1 \subseteq T_2 \subseteq \cdots $. As $R$ is Noetherian, this chain must stop somewhere which means $xT_{s+1} = T_s =T_{s+1}$ and so $T_{s+1}=0$ and consequently $T=0$. Therefore, to avoid contradiction, there must be natural number $r$ such that $T_r \nsubseteq xM$.
\end{proof}
\begin{prop}\label{PropMuType}
Assume that $M$ is a one-dimensional Cohen-Macaulay $R$-module. Let $N$ be a submodule of $M$ such that $M/N$ is again one-dimensional Cohen-Macaulay $R$-module. 
\begin{itemize}
\item [(i)] If $M$ is Ulrich then so is $N$ and $M/N$.
\item [(ii)] If $N$ is Ulrich then $\mu(N)\leq\redu(M)$.
\end{itemize}
. In particular if $I$ is an ideal of a one-dimensional Cohen-Macaulay local ring $R$, which is Ulrich as an $R$-module, such that $R/I$ is one-dimensional Cohen-Macaulay, then $ \mu(I)\leq \redu(R)$.
\end{prop}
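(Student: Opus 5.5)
For (i) I will take $x\in\fm$ with $xM=\fm M$, and then show that $x$ is regular on $M/N$ and that $N\cap xM=xN$. For (ii) I will take a general element $x$ that is regular on $M$ and $M/N$ and satisfies $\fm N=xN$, and then embed $N/\fm N$ into the socle of $M/xM$. Throughout I use that the residue field is infinite (the standing assumption of this chapter) and I may assume $N\neq 0$.

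\emph{Preliminary observation.} $M/N$ is one-dimensional Cohen-Macaulay with $\Supp(M/N)\subseteq \Supp(M)$ and $\dim M=1$. Hence $\Ass(M/N)$ consists of primes $P$ with $\dim R/P=1$ in $\Supp M$, i.e.\ of minimal primes of $M$. Consequently any parameter element $x$ for $M$ is a non-zero-divisor on $M$ (as $M$ is Cohen-Macaulay) and on $M/N$.

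Applying $-\otimes R/xR$ to $0\lo N\lo M\lo M/N\lo 0$ and using that $x$ is regular on $M/N$ gives $N\cap xM=xN$. By the depth lemma $N$ has depth $\ge 1$ and dimension $\le 1$, so $N$ is one-dimensional Cohen-Macaulay.

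\emph{Proof of (i).} Since $M$ is Ulrich and the residue field is infinite, choose $x\in\fm$ with $\fm M=xM$ (see the discussion after the definition); $x$ is a parameter for $M$. Then
\[
\fm (M/N)=(\fm M+N)/N=(xM+N)/N=x(M/N),
\]
so $M/N$ is Ulrich.

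For $N$ we get $\fm N\subseteq \fm M\cap N=xM\cap N=xN$, hence $\fm N=xN$. So $xR$ induces a reduction of $\fm$ on $N$. Therefore $\e^0_{\fm}(N)=\l(N/xN)=\l(N/\fm N)=\mu(N)$, and $N$ is Ulrich.

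\emph{Proof of (ii).} Now $N$ is Ulrich. I choose $x\in\fm$ such that:
\begin{itemize}
\item $\fm N=xN$;
\item $x$ avoids all minimal primes of $M$, and hence, by the observation, is regular on $M$ and $M/N$.
\end{itemize}
Then $N\cap xM=xN$, so the natural map $N/xN\lo M/xM$ is injective. Its image is killed by $\fm$, because $\fm N=xN\subseteq xM$. Hence
\[
N/\fm N=N/xN\hookrightarrow \Hom_R(R/\fm, M/xM).
\]
Since $x$ is $M$-regular, the length of the right-hand side is $\redu(M)$, which gives $\mu(N)\le \redu(M)$. The particular statement follows by taking $M=R$ and $N=I$.

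\emph{Main obstacle.} The delicate point is the simultaneous choice of $x$ in (ii). I would argue that the elements $x$ generating a (minimal) reduction of $\fm$ with respect to $N$ form a nonempty Zariski-open set in $\fm/\fm^2$, using the infinite residue field. Avoiding the finitely many minimal primes of $M$ is also an open condition, so a general element satisfies both requirements. Everything else is the exact-sequence bookkeeping above.
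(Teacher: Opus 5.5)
Your proof is correct and follows the paper's route: you reduce $0\lo N\lo M\lo M/N\lo 0$ modulo an element $x$ regular on $M$ and $M/N$, obtain (i) from $\fm M=xM$ together with $N\cap xM=xN$, and for (ii) embed $N/xN=N/\fm N$ into the socle $(xM:_M\fm)/xM$ of $M/xM$, whose length is the type of $M$. Your general-element argument produces a single $x$ with $\fm N=xN$ that is also regular on $M$ and $M/N$; this makes explicit a step the paper leaves tacit (the paper instead asks, unnecessarily, that $x$ be regular on $R$), and your check that $N$ is Cohen--Macaulay is likewise a useful addition.
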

\begin{proof} $(i)$. Choose $x\in\fm$ regular over $R$ and $M$ and $M/N$. Applying $\Tor^R(R/xR , -)$ over exact sequence $0 \lo N \lo M \lo M/N \lo 0$ we get  $0 \lo N/xN \lo M/xM \lo M/xM+N \lo 0$ which follows the result.

$(ii)$. Let $x\in R$ be a non-zero-divisor for both $M$, $M/N$ and $R$. Proof of $(i)$. shows that $N\nsubseteq xM$. Now,  By definition                   $\redu(M) = \l(\Ext_R^1(R/\fm , M)) = \l(\Hom_R(R/\fm , M/xM)) = \l((xM :_M \fm)/xM)$.  Therefore
\begin{center}
	$\redu(M)=\l((xM :_M \fm)/xM) \geq \l((N+xM)/xM) =\l(N/xM\cap N)= \mu(N)$,
\end{center}	
where the last equality comes from the facts that $xM\cap N = xN$ and  $N$ is Ulrich.
\end{proof}

First, we investigate about $\fm$-primary and Cohen-Macaulay ideals which are Ulrich as $R$-module.

\begin{prop}\label{Lemtype}
Assume that $(R,\fm)$ is a one-dimensional Cohen-Macaulay local ring with infinite residue field. Assume that $I$ is an  ideal of $R$ which is Ulrich as an $R$-module. Then:
\begin{itemize}
	\item[(i)] If $I$ is $\fm$-primary then $\e_{\fm}^0(R)\leq\redu(R)+\redu(R/I)$. In particular, if $R/I$ is Gorenstein then $R$ has minimal multiplicity. 
	\item[(ii)] If $R/I$ is again one-dimensional Cohen-Macaulay ring then  $ \mu(I)\leq \redu(R) $. 
\end{itemize} 
In particular, if $R$ is Gorenstein then  equality hold in both cases. Therefore, in $(ii)$ the ideal $I$ is cyclic.
\end{prop}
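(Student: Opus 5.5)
The plan is to treat the two parts separately: part (ii) reduces to earlier results, while part (i) comes from a socle/type count along the sequence $0\lo I\lo R\lo R/I\lo 0$.

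For $(ii)$ nothing new is needed. Here $R/I$ is one-dimensional Cohen-Macaulay and $I$ is Ulrich. Apply Proposition \ref{PropMuType}(ii) with $M=R$ and $N=I$; this gives $\mu(I)\leq\redu(R)$ directly, as already recorded in the ``in particular'' clause there. If $R$ is Gorenstein, then $\redu(R)=1$, so $\mu(I)\leq 1$. Since $I\neq 0$, we get $\mu(I)=1$, i.e.\ $I$ is cyclic and equality holds.

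For $(i)$, apply $\Hom_R(R/\fm,-)$ to $0\lo I\lo R\lo R/I\lo 0$. Since $\depth R=1$, we have $\Hom_R(R/\fm,R)=0$, and the long exact sequence gives
\[0\lo \Hom_R(R/\fm,R/I)\lo \Ext^1_R(R/\fm,I)\lo \Ext^1_R(R/\fm,R)\lo \Ext^1_R(R/\fm,R/I),\]
so $\redu(I)\leq \redu(R/I)+\redu(R)$. Because $I$ is $\fm$-primary, $\e_\fm^0(I)=\e_\fm^0(R)$. Because $I$ is Ulrich, Lemma \ref{FirstLemma} $(i)\Rightarrow(ii)$ gives $\redu(I)=\e^0_\fm(I)=\e^0_\fm(R)$. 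Hence $\e^0_\fm(R)\leq\redu(R)+\redu(R/I)$. If $R/I$ is Gorenstein, then $\redu(R/I)=1$, so $\e^0_\fm(R)\leq \redu(R)+1$. Now compare with the standard inequality for a one-dimensional Cohen-Macaulay ring with a minimal reduction $xR$ of $\fm$:
\[\redu(R)=\l\bigl((xR:\fm)/xR\bigr)\leq \l(\fm/xR)=\e^0_\fm(R)-1.\]
Equality holds iff $\fm^2\su xR$, i.e.\ $\fm^2=x\fm$, which is minimal multiplicity. Combining the two inequalities forces $\redu(R)=\e^0_\fm(R)-1$, hence minimal multiplicity. (If $R$ is regular, it trivially has minimal multiplicity.)

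The remaining, and main, obstacle is the Gorenstein equality in $(i)$. There we need $\e^0_\fm(R)=1+\redu(R/I)$, i.e.\ the map $\Ext^1_R(R/\fm,I)\lo\Ext^1_R(R/\fm,R)\cong R/\fm$ must be onto. Equivalently, the next map $\Ext^1_R(R/\fm,R)\lo\Ext^1_R(R/\fm,R/I)$ must vanish. I would compute it via $\Ext^1_R(R/\fm,-)\cong\Hom_R(R/\fm,-\otimes R/xR)$ for a superficial non-zero-divisor $x$ with $\fm I=xI$. In these terms it is the map induced by $\operatorname{soc}(R/xR)\lo R/(I+xR)$. The Artinian Gorenstein ring $R/xR$ has a simple socle contained in every nonzero ideal. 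So whenever $I\nsubseteq xR$, the socle lies in $(I+xR)/xR$ and the map is zero, giving equality. If $I\su xR$, I would try to choose $x$ generic, so that $I\nsubseteq xR$, using the infinite residue field: superficial elements form a nonempty Zariski-open set, and $I\su xR$ is a closed condition on $x$. This generic-choice step is the one I expect to need the most care.
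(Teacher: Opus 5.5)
Your proofs of (ii), of the inequality in (i), and of the minimal-multiplicity consequence are correct. For the inequality you use $0\lo\Hom_R(R/\fm,R/I)\lo\Ext^1_R(R/\fm,I)\lo\Ext^1_R(R/\fm,R)$ together with $\redu(I)=\e_{\fm}^0(I)=\e_{\fm}^0(R)$. The paper instead counts lengths in $R/xR$: $\redu(R)=\ell((xR:\fm)/xR)\geq\ell((I+xR)/xR)=\mu(I)-\ell((I:\fm)/I)$. The two are the same computation once you identify $\Ext^1_R(R/\fm,M)$ with $\Hom_R(R/\fm,M/xM)$. Your version avoids Lemma~\ref{Lemx} and the identity $(I:x)=(I:\fm)$, and it makes explicit the bound $\redu(R)\leq\e_{\fm}^0(R)-1$ for non-regular $R$, which the paper leaves implicit.

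The step that fails is the Gorenstein equality in (i). It cannot be repaired, because that equality is false.
\begin{itemize}
\item Already $R=k[[t]]$, $I=\fm$ gives $\e_{\fm}^0(R)=1<2=\redu(R)+\redu(R/I)$.
\item For a non-regular example take $R=k[[t^2,t^3]]$ and $I=t^2\fm=(t^4,t^5)=t^4k[[t]]$. Then $\mu(I)=2=\e_{\fm}^0(R)$, so $I$ is an $\fm$-primary Ulrich ideal. But $R/I$ has basis $1,t^2,t^3$ with socle spanned by $t^2,t^3$, so $\redu(R)+\redu(R/I)=3$.
\end{itemize}

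Your generic choice of $x$ cannot help, and the reason is visible in your own sequence. Take any non-zero-divisor $x$ with $\fm I=xI$. The map $\Ext^1_R(R/\fm,I)\lo\Ext^1_R(R/\fm,R)$ becomes $I/xI\lo(xR:\fm)/xR$, with image $(I+xR)/xR$. Hence $\ell((I+xR)/xR)=\e_{\fm}^0(R)-\redu(R/I)$, which does not depend on $x$. So whether $I\su xR$ holds is the same for all admissible $x$. In the second example every admissible $x$ has $t$-order $2$, and $I=x\,t^2k[[t]]\su xR$. Incidentally, $\Ext^1_R(R/\fm,-)\cong\Hom_R(R/\fm,-\otimes_R R/xR)$ is valid only for modules on which $x$ is regular, so it does not compute $\Ext^1_R(R/\fm,R/I)$.

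What is true for Gorenstein $R$ is this: $\e_{\fm}^0(R)=1+\redu(R/I)$ when $I\nsubseteq xR$, and $\e_{\fm}^0(R)=\redu(R/I)$ when $I\su xR$. The paper's own proof has the same hole. It invokes Lemma~\ref{Lemx} to assume $I\nsubseteq xR$, but that lemma replaces $I$ by an ideal $N$ with $I=x^jN$, which changes $R/I$; for $R=k[[t]]$, $I=\fm$ it gives $N=R$. Only the Gorenstein equality in (ii), that $I$ is cyclic, holds without the extra hypothesis $I\nsubseteq xR$.
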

\begin{proof}
Note that choosing $x\in\fm$ as a superficial element for $R$  we have $I\fm=xI$ by assumption. Also, by Lemma\ref{Lemx} we may assume that $I\nsubseteq xR$. Therefore one have
\begin{center}
	$\redu(R)=\l((x:\fm)/xR) \geq \l((I+xR)/xR) =\l(I/xR\cap I)= \mu(I) - \l((I:\fm)/I) $.
\end{center}  
where the last equality comes from the exact sequence $0\lo x(I:\fm)/xI \lo I/xI \lo I/x(I:\fm)\lo 0$ and the fact that $(I:\fm)=(I:xR)$ since $I$ is Ulrich as an $R$-module.

$(i)$. The inequality is clear as $\mu(I)=\e_{\fm}^0(R)$ by Lemma\ref{FirstLemma}.

$(ii)$. The inequality is clear as $\l((I:\fm)/I) =0$ when $R/I$ is Cohen-Macaulay.

In particular, if $R$ is Gorenstein then $\fm = (x:I)$ which means $x:\fm=x:(x:I)=I + xR$ and equality holds.
\end{proof}

Next, we see what happens when one of the associated primes is Ulrich as an $R$-module.

\begin{prop}\label{Ptype}
	Assume that $(R,\fm)$ is a one-dimensional Cohen-Macaulay local ring with infinite residue field. Assume that $P\in\Spec(R)$ such that $P$ is Ulrich as an $R$module. Then
	\begin{itemize}
		\item [(i)] $\e_{\fm}^0(R) \leq \redu(R)+\e_{\fm}^0(R/P)$.
		\item[(ii)]  if  $R/P$ is regular then $R$ has minimal multiplicity.
		\item[(iii)] if $R$ is Gorenstein then $R$ is hypersurface and $\Ass(R) = \{ P \}$.
	\end{itemize}

\end{prop}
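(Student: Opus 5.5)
The plan is to treat the case $P=\fm$ separately and otherwise assume $P$ is a minimal prime. If $P=\fm$, then $\fm$ Ulrich means $\mu(\fm)=\e_{\fm}^0(R)$, i.e. $R$ has minimal multiplicity. Then $\redu(R)=\e_{\fm}^0(R)-1$, so (i) holds with $\e_{\fm}^0(R/\fm)=1$ and (ii) is immediate. If $P\neq \fm$, then since $\dim R=1$ the prime $P$ has height $0$. So $P\in\Ass(R)$ and $R/P$ is a one-dimensional domain, hence Cohen-Macaulay. We are then in the situation of Proposition \ref{PropMuType} with $M=R$ and $N=P$.

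For (i), choose $x\in\fm$ (possible since the residue field is infinite) that is simultaneously superficial for $R$, for $R/P$ and for $P$, and a non-zero-divisor on $R$ and on $R/P$. Then $\fm P=xP$. Tensoring $0\lo P\lo R\lo R/P\lo 0$ with $R/xR$ and using $\Tor_1^R(R/xR,R/P)=0$ gives the exact sequence $0\lo P/xP\lo R/xR\lo R/(P+xR)\lo 0$. Taking lengths gives
$\e_{\fm}^0(R)=\l(R/xR)=\l(P/xP)+\l(R/(P+xR))=\mu(P)+\e_{\fm}^0(R/P)$.
Here $\l(P/xP)=\mu(P)$ because $\fm P=xP$, and $\l(R/(P+xR))=\e_{\fm}^0(R/P)$ because $R/P$ is Cohen-Macaulay and $x$ is superficial for it. On the other hand $\fm P\subseteq xR$, so $(P+xR)/xR$ lies in the socle of $R/xR$, and $P\cap xR=xP$. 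This gives $\mu(P)\le \redu(R)$, which is exactly Proposition \ref{PropMuType}(ii). Combining the two yields $\e_{\fm}^0(R)\le \redu(R)+\e_{\fm}^0(R/P)$; in fact we get the equality $\e_{\fm}^0(R)=\mu(P)+\e_{\fm}^0(R/P)$.

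For (ii), if $R/P$ is regular then $\e_{\fm}^0(R/P)=1$, so (i) gives $\e_{\fm}^0(R)\le \redu(R)+1$. The socle of $R/xR$ lies in $\fm/xR$, so $\redu(R)\le \mu(\fm/xR)=\mu(\fm)-1$. Hence $\e_{\fm}^0(R)\le\mu(\fm)$. Abhyankar's inequality $\mu(\fm)\le \e_{\fm}^0(R)+\dim R-1$ then forces equality, which is minimal multiplicity.

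For (iii), assume $R$ is Gorenstein, so $\redu(R)=1$. Then $\mu(P)\le1$, so $P=aR$ is cyclic. Since $P$ is Ulrich, $\e_{\fm}^0(P)=1$. Hence $R/Q$, with $Q=\Ann(a)=\Ann(P)$, is a one-dimensional Cohen-Macaulay ring of multiplicity one, so it is a DVR. In a Gorenstein ring, linkage gives $\Ann(Q)=P$. The plan is to show that $Q$ is also Ulrich, via the duality of Remark \ref{1Rem}(7) with $K=R$, and then apply (ii) to $Q$ to get minimal multiplicity. For a Gorenstein ring, minimal multiplicity forces $\e_{\fm}^0(R)\le 2$, hence $\mu(\fm)\le2$ and $R$ is a hypersurface. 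The main obstacle is the final claim $\Ass(R)=\{P\}$. The multiplicity count $\e_{\fm}^0(R)=1+\e_{\fm}^0(R/P)$ still allows $\e_{\fm}^0(R)=2$ with two minimal primes $P\neq Q$ (for example $k[[x,y]]/(xy)$). So I would first try to prove $P=Q$ directly, for instance by showing $a^2\in \fm P=xP$ forces $a\in Q$. I expect this step may require an additional hypothesis, or else careful use of the hypothesis that $P$ itself, not just $Q$, is Ulrich.
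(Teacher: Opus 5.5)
Your part (i) is correct and follows the paper's argument: additivity of $\e_{\fm}^0$ along $0\lo P\lo R\lo R/P\lo 0$, together with $\mu(P)\le\redu(R)$ from Proposition \ref{PropMuType}(ii). Handling $P=\fm$ separately is a useful addition, since the paper tacitly needs $\dim R/P=1$.

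In (ii), however, the step $\redu(R)\le\mu(\fm/xR)$ is false in general. For example, $R=k[[x,y,z]]/(y,z)^3$ has $\redu(R)=3>2=\mu(\fm/xR)$. The socle containment only gives $\redu(R)\le\ell(\fm/xR)=\e_{\fm}^0(R)-1$. Combined with (i), this forces the socle of $R/xR$ to be all of $\fm/xR$, i.e.\ $\fm^2\subseteq xR$, whence $\fm^2=x\fm$. That is exactly the paper's chain $\redu(R)+1\le\e_{\fm}^0(R)\le\redu(R)+1$, so the repair is easy.

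The real gap is (iii), and it cannot be closed, because (iii) is false as stated. Your doubt about $\Ass(R)=\{P\}$ is justified, and your example is a genuine counterexample: in $k[[x,y]]/(xy)$, the prime $P=(x)\cong R/(y)\cong k[[x]]$ is Ulrich but $\Ass(R)=\{(x),(y)\}$. For $P=\fm$ the claim fails trivially, since $\fm\notin\Ass(R)$. Your example is precisely the case the paper's proof misses. From $1=\e_{\fm}^0(P)=\ell(PR_P)\e_{\fm}^0(R/P)+\sum_{q\neq P}\ell(R_q)\e_{\fm}^0(R/q)$ it concludes that $R/P$ is regular and $\Ass(R)=\{P\}$, overlooking the possibility $PR_P=0$.

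Your route to ``hypersurface'' fails as well. Nothing you cite makes $Q=\Ann(P)$ Ulrich. Remark \ref{1Rem}(7) with $K=R$ only concerns ideals containing $R$. And $\Hom_R(-,R)$ sends $P\cong R/Q$ to $\Ann(Q)=P$, so duality says nothing about $Q$. In fact both that step and the hypersurface conclusion are false:
\begin{itemize}
\item Take $R=k[[X,Y,Z]]/(XZ-Y^2,\,X^3-YZ)$ with $k$ infinite. It is a complete intersection, hence Gorenstein, with $\mu(\fm)=3$ (both equations lie in $(X,Y,Z)^2$), so it is not a hypersurface.
\item The ideal $P=(Z^2-X^2Y)R$ is prime with $R/P\cong k[[t^3,t^4,t^5]]$, and one checks $XP=YP=0$. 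Since $P$ is cyclic with positive depth, $P\cong R/(X,Y)R\cong k[[Z]]$. Hence $\mu(P)=\e_{\fm}^0(P)=1$, and $P$ is Ulrich.
\item Here $\e_{\fm}^0(R)=4$ and $\redu(R)=1$, so $R$ does not have minimal multiplicity. Also $Q=(X,Y)R$ has $\mu(Q)=2<3=\e_{\fm}^0(Q)$, so $Q$ is not Ulrich.
\end{itemize}

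What the associativity formula really yields is a dichotomy. If $PR_P\neq 0$, then $\Ass(R)=\{P\}$, $P=Q$ (so $P^2=0$), $R/P$ is a DVR and $\e_{\fm}^0(R)=2$, so $R$ is a hypersurface. If $PR_P=0$, then $R$ is reduced, $\Ass(R)=\{P,Q\}$, $R/Q$ is a DVR, and $\e_{\fm}^0(R)=1+\e_{\fm}^0(R/P)$. This last value can be arbitrarily large, e.g.\ $k[[x,y]]/(xg)$ with $P=(g)$, $g$ irreducible and $x\nmid g$.
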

\begin{proof}
	$(i)$. Consider the exact sequence $0\lo P\lo R\lo R/P\lo0$ which give us $\e_{\fm}^0(R)=\e_{\fm}^0(P) + \e_{\fm}^0(R/P)$. Now the result follows by Proposition\ref{PropMuType}.
	
	$(ii)$. Clear as then $\redu(R) + 1 \leq \e_{\fm}^0(R) \leq \redu(R)+\e_{\fm}^0(R/P) \leq  \redu(R) + 1$
	
	$(iii)$. By Proposition\ref{Lemtype} we  have $\e_{\fm}^0(P) = \mu(P) = \redu(R) = 1$. Now $\e_{\fm}^0(P) =\l(PR_P)\e_{\fm}^0(R/P) + \Sigma_{q\neq p}\l(R_q)\e_{\fm}^0(R/q)$ where Sigma runs over other associated primes of $R$. Therefore $R/P$ is regular and $P$ is the only associated prime of $R$ and $\e_{\fm}^0(R) = 2$ by $(i)$, which shows the result.
\end{proof}

\begin{rem} \emph{Assume that $(R,\fm)$ is a one-dimensional Cohen-Macaulay local ring with infinite residue field, and Assume that $P\in\Spec(R)$. One may see Proposition\ref{Ptype} in another way: If $R/P$ is regular then there exists $x\in\fm$ such that $\fm= xR + P$. Now, if  $P\subseteq xR$  then $\fm^2=(P+xR)^2 \subseteq xR$ which means $R$ has minimal multiplicity. In particular, if $\Ass(R) = \{ P \}$ such that $R/P$ is regular and $P^2=0$ then $R$ has minimal multiplicity.}
	
\end{rem}

Let $(R,\fm)$ be a one-dimensional Cohen-Macaulay local ring. It is clear that when $\fm$ is Ulrich then $R$ has minimal multiplicity, means $\e_{\fm}^0(R)=\redu(R)+1$. Next proposition prepare some upper bound for multiplicity of the ring $R$ when some $\fm$-primary ideal, not necessary maximal ideal, is Ulrich as an $R$-module.

\begin{prop}\label{PropType}Let $R$ be a  one-dimensional Cohen-Macaulay local ring and assume that $\Ass(R)$ is not singleton. Let $I$ be an $\fm$-primary ideal of $R$ which is Ulrich as an $R$-module. If	 $I=I_1\oplus I_2$ such that $R/I_1$ and $R/I_2$ are one-dimensional Cohen-Macaulay. Then $1 + \redu(R) \leq\e_{\fm}^0(R)\leq 2\redu(R)$.

\end{prop}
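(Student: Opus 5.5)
The lower bound $1+\redu(R)\leq \e_{\fm}^0(R)$ holds in every one-dimensional Cohen-Macaulay local ring, independently of the hypotheses on $I$. I will use the standard Abhyankar-type inequality: with $x$ a superficial element, $\e_{\fm}^0(R)=\l(R/xR)\geq \l((x:\fm)/xR)+1=\redu(R)+1$. The inequality $\l(R/xR)\geq \l((x:\fm)/xR)+1$ holds because $(x:\fm)\neq R$, so the quotient $R/(x:\fm)$ has positive length. All the content is therefore in the upper bound $\e_{\fm}^0(R)\leq 2\redu(R)$.

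For the upper bound, the plan is to reduce to Proposition \ref{PropMuType}(ii), applied twice, once to each summand. First, since $I$ is Ulrich and $\fm$-primary, Remark \ref{1Rem}(3) gives $\mu(I)=\e_{\fm}^0(R)$. Next, a direct summand of an Ulrich module is Ulrich. Indeed, each $I_j$ is a nonzero ideal of a one-dimensional Cohen-Macaulay ring, hence maximal Cohen-Macaulay. Moreover, $\fm I=xI$ splits componentwise as $\fm I_j=xI_j$, because the decomposition $I=I_1\oplus I_2$ is a decomposition of $R$-modules, so multiplication by $\fm$ or by $x$ respects it. Thus $I_1,I_2$ are Ulrich and $\mu(I)=\mu(I_1)+\mu(I_2)$.

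Now I apply the ``in particular'' part of Proposition \ref{PropMuType} (equivalently Proposition \ref{Lemtype}(ii)) to each $I_j$. Each $I_j$ is an ideal that is Ulrich as an $R$-module, and by hypothesis $R/I_j$ is one-dimensional Cohen-Macaulay. Hence $\mu(I_j)\leq\redu(R)$ for $j=1,2$. Summing gives
\[
\e_{\fm}^0(R)=\mu(I)=\mu(I_1)+\mu(I_2)\leq 2\redu(R).
\]

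The only delicate point is the claim that summands of an Ulrich module are Ulrich, together with the precise meaning of $I=I_1\oplus I_2$. I read the latter as an internal direct sum of ideals, i.e.\ $I=I_1+I_2$ with $I_1\cap I_2=0$. Under this reading the splitting of $\fm I=xI$ is immediate, since $\fm I_j\subseteq I_j$ and $xI_j\subseteq I_j$. Alternatively, one can avoid the splitting argument by using additivity of multiplicity and of $\mu$ on direct sums: $\mu(I_j)\leq \e_{\fm}^0(I_j)$ holds for each $j$ because $I_j$ is maximal Cohen-Macaulay, and summing gives equality overall, which forces equality for each $j$. The hypothesis that $\Ass(R)$ is not a singleton plays no direct role beyond guaranteeing that such a decomposition can exist, so I will not use it.
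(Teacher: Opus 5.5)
Your upper bound is the paper's argument: the paper applies Proposition \ref{Lemtype}(ii) to $I_1$ and $I_2$ and sums, $\e_{\fm}^0(R)=\mu(I)=\mu(I_1)+\mu(I_2)\le 2\redu(R)$. Your observation that $\fm I=xI$ splits componentwise as $\fm I_j=xI_j$ supplies a step the paper leaves implicit: each summand must be Ulrich before that proposition can be invoked.

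The one incorrect step is your justification of the lower bound. The inequality $1+\redu(R)\le\e_{\fm}^0(R)$ does \emph{not} hold in every one-dimensional Cohen-Macaulay local ring; for a discrete valuation ring, $\e_{\fm}^0(R)=\redu(R)=1$. Your argument breaks exactly there, because $(x:\fm)=R$ if and only if $\fm\subseteq xR$, i.e.\ $\fm=xR$, i.e.\ $R$ is regular. So you need $R$ to be non-regular, and this is supplied precisely by the hypothesis you chose to discard. A regular local ring is a domain, so $|\Ass(R)|>1$ excludes it. Alternatively, both summands are nonzero, since $I_2=0$ would give $R/I_1=R/I$ of dimension $0$ (and symmetrically for $I_1$). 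Then $I_1I_2\subseteq I_1\cap I_2=0$, so $R$ is not a domain. With that sentence added, $(x:\fm)\neq R$ and your length count $\ell(R/xR)=\ell(R/(x:\fm))+\ell((x:\fm)/xR)\ge 1+\redu(R)$ is valid. The paper does not prove the lower bound at all; it treats it as the standard bound $\redu(R)\le\e_{\fm}^0(R)-1$ for non-regular Cohen-Macaulay local rings.
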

\begin{proof}
By Proposition\ref{Lemtype} we have $\mu(I_1) \leq\redu(R)$ and $\mu(I_2)\leq \redu(R)$. Therefore
\begin{center}
	$\e_{\fm}^0(R) = \mu(I) = \mu(I_1)+\mu(I_2)\leq 2\redu(R)$.
\end{center}
\end{proof}

\section{Decomposability of Ulrich modules}

\begin{prop}\label{2A}Let $(R,\fm)$ be a local Noetherian ring with infinite residue field. Let $C$ be a Cohen-Macaulay $R$-module of dimension $d>0$ and that $P \in \Ass(C)$. If $\q$ is a regular sequence for $C$ then one has $\q C\cap\Gamma_{P}(C) = \q\Gamma_{P}(C)$.
\end{prop}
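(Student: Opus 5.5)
Write $\q = x_1,\dots,x_n$ for the $C$-regular sequence and $\Gamma=\Gamma_P(C)$. The inclusion $\q\Gamma\subseteq \q C\cap\Gamma$ is clear, because $\Gamma$ is a submodule of $C$. I will prove the reverse inclusion by induction on $n$, using only two facts: every associated prime of $C$ is minimal, and $\Gamma$ is a submodule of $C$ killed by a power of $P$.

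First I check that $C/\Gamma$ has no nonzero elements killed by $x_1$. Since $C$ is Cohen-Macaulay, $\Ass(C)$ consists of minimal primes of $\Supp(C)$, all of dimension $d$. We have $\Ass(\Gamma)=\{P\}$. I claim $\Ass(C/\Gamma)\subseteq \Ass(C)\setminus\{P\}$. The argument: if $z\in C$ and $\Ann(\bar z)=Q$, then $Q$ lies in $\Supp(C)$. If $Q$ contains $P$, then $P^k Q^m z\subseteq \Gamma$ for suitable $k,m$, so a power of $P$ kills $z$ (using minimality, after localizing at $Q$), hence $z\in\Gamma$. Otherwise $Q$ is minimal, since it is associated to a quotient whose support sits inside $\Supp(C)$. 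In the cleanest version: localize at a prime $Q\in\Ass(C/\Gamma)$. If $Q\neq P$, then $Q$ minimal gives the claim. If $Q=P$, then $\Gamma_P=\Gamma_{P}(C)_P=C_P$, because $C_P$ has finite length, so $(C/\Gamma)_P=0$, a contradiction. If $Q$ is non-minimal, then $\dim C/QC<d$; but $C/\Gamma$ embeds in $\prod_{Q'\neq P}C_{Q'}$ via the primary decomposition $\Gamma=\bigcap_{Q'\neq P}\ker(C\to C_{Q'})$, so its associated primes are among the $Q'$. In every case the associated primes of $C/\Gamma$ are minimal primes of $C$ other than $P$. Because $x_1$ is $C$-regular, $x_1$ avoids all of them, and therefore $x_1$ is a nonzerodivisor on $C/\Gamma$.

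Base step. Take $z\in x_1C\cap\Gamma$ and write $z=x_1c$. Then $x_1\bar c=0$ in $C/\Gamma$, so $c\in\Gamma$ and $z\in x_1\Gamma$.

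Inductive step. Pass to $\bar C=C/x_1C$, which is Cohen-Macaulay of dimension $d-1$ with regular sequence $x_2,\dots,x_n$. I need to relate $\Gamma_P(C)$ to the corresponding object in $\bar C$. By the base step, $\Gamma/x_1\Gamma$ embeds into $\bar C$. This step is the main obstacle, because $P\notin\Ass(\bar C)$. To get around it, I replace $\Gamma_P$ by the kernel description $\Gamma=\ker(C\to\bigoplus_{Q'\neq P}C_{Q'})$. Equivalently, $\Gamma=(0:_C y)$ for an element $y$ in the other associated primes but not in $P$; such a $y$ exists by prime avoidance and a suitable power. The exact sequence
$$0\to\Gamma\to C\to C/\Gamma\to 0$$
has $x_1,\dots,x_n$ regular on all three terms. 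Applying the base argument iteratively to this sequence, $\Tor_1(R/(x_1,\dots,x_i),C/\Gamma)=0$ for each $i$. Tensoring with $R/\q$ then gives the injection $\Gamma/\q\Gamma\hookrightarrow C/\q C$, which is exactly $\q C\cap\Gamma=\q\Gamma$.

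For the iteration, regularity of $x_{i+1}$ on $(C/\Gamma)/(x_1,\dots,x_i)(C/\Gamma)$ follows because $C/\Gamma$ is itself Cohen-Macaulay of dimension $d$: from the depth lemma on the sequence, $\Gamma$ is a nonzero submodule of a CM module of dimension $d$, so $\depth(\Gamma)\geq 1$, and an inductive depth count gives the rest. Finally, a system of parameters of $C$ remains a system of parameters on $C/\Gamma$, since $\Supp(C/\Gamma)\subseteq\Supp(C)$ and $\dim C/\Gamma=d$.
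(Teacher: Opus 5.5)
Your base step is correct: it is the paper's $d=1$ argument, and it works for a single $C$-regular element in any dimension. Your reduction is also the right one. Since $\mathfrak q$ is $C$-regular, the Koszul homology sequence of $0\to\Gamma\to C\to C/\Gamma\to 0$ gives $(\mathfrak qC\cap\Gamma)/\mathfrak q\Gamma\cong H_1(\mathfrak q;C/\Gamma)$, so the statement is \emph{equivalent} to $\mathfrak q$ being regular on $C/\Gamma$. The gap is exactly your claim that $C/\Gamma$ is Cohen--Macaulay. The depth lemma gives only $\depth(C/\Gamma)\ge\min\{\depth C,\depth\Gamma-1\}$, which is at most $d-1$ because $\depth\Gamma\le d$. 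No ``inductive depth count'' can improve this, since $H^{d-1}_{\fm}(C/\Gamma)\cong\ker\bigl(H^d_{\fm}(\Gamma)\to H^d_{\fm}(C)\bigr)$ can be nonzero even when $\Gamma$ is Cohen--Macaulay. (Also, $\Gamma_P(C)=(0:_Cz^n)$, $n\gg0$, for $z\in P$ avoiding the other associated primes, not for $y$ in the other primes and outside $P$; you do not actually use that slip.)

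The gap cannot be closed, because the proposition is false for regular sequences of length $\ge 2$. Let $S=k[[x,y,z]]$ with $k$ infinite, and let $C=\operatorname{coker}\left(\begin{smallmatrix} y & z\\ 0 & -x\end{smallmatrix}\right)$, i.e.\ $C=Su_1+Su_2$ with relations $yu_1=0$ and $zu_1=xu_2$.
\begin{itemize}
\item The determinant is $-xy\neq 0$, so $C$ has projective dimension one. Hence $C$ is Cohen--Macaulay of dimension $2$ with $\Ass C=\{(x),(y)\}$.
\item One checks that $\Gamma_{(x)}(C)=S\,yu_2\cong S/(x)$, and that $C/\Gamma_{(x)}(C)\cong (x,z)S/(y)$, which has depth $1$.
\item Take the system of parameters (hence $C$-regular sequence) $\mathfrak q=(x+y,z)$. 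Then $yu_2=(x+y)u_2-zu_1\in\mathfrak qC\cap\Gamma_{(x)}(C)$.
\item But $yu_2\notin\mathfrak q\Gamma_{(x)}(C)$, because $\Gamma_{(x)}(C)/\mathfrak q\Gamma_{(x)}(C)\cong S/(x,y,z)\neq 0$.
\end{itemize}
The paper's inductive step breaks at the matching point. Taking $p=x\in P$ and writing $p\,c_i=(\prod_{j\ne i}x_j)\,n_i$, one only obtains $\sum_i n_i=0$ elementwise, not $n_i=0$; here $c_1=u_2$, $c_2=-u_1$, and $n_1=u_1=-n_2$.

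Moreover $\fm C=\mathfrak qC$, so this $C$ is Ulrich. Thus even the consequence $\fm C\cap\Gamma_P(C)=\fm\Gamma_P(C)$ used in Corollary~\ref{genGamma} fails. Also $C\neq\Gamma_{(x)}(C)\oplus\Gamma_{(y)}(C)=S\,yu_2\oplus Su_1$, although $\depth S\ge\dim C$. Only the length-one case, which is your base step, survives.
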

\begin{proof}
	Proof is by induction on $d$. First assume that $d=1$ and $x\in\fm$ is regular for $C$. Let $a\in xC\cap\Gamma_{p}(C)$. Therefore, there exists $b\in C$ such that $a=xb$. As $a\in\Gamma_{P}(C)$, there exists $t\in N$ such that $P^t xb=0$ which means $b\in \Gamma_{P}(C)$ since $x$ is regular over $C$. 
	
	Now assume that $d>1$ and inductive hypothesis holds for $s<d$. Assume that $\q=x_1R+x_2R + \cdots +x_sR$ is a regular sequence for $C$. Let $a = \Sigma_{i=1}^sx_ic_i \in \q C\cap\Gamma_{P}(C)$ and set $\q_j=\Sigma_{i=1,i\neq J}^sx_iR$. As $C/x_iC$ is $d-1$-dimensional Cohen-Macaulay, we got $ \q_i(C/x_iC)\cap\Gamma_{P}(C/x_iC)=\q_i\Gamma_{P}(C/x_iC)$ for all $i$ and so, by choosing a suffice large $t$,  we have $P^tc_i \in x_jC$ for all $j$ such that $j\neq i$. Therefor $P^tc_i \in \cap_{j=1,j\neq i}^{s}x_jC = \prod_{j=1,j\neq i}^sx_jC$. Let $P^tc_i= \prod_{j=1,j\neq i}^sx_jN_i$ for $N_i\subseteq C$ for all $i$. Therefore, we got
	\begin{center}
	$0=P^ta=\Sigma_{i=1}^sP^tx_ic_i=\Sigma_{i=1}^sx_i\prod_{j=1,j\neq i}^sx_jN_i =\Sigma_{i=1}^s\prod_{j=1}^sx_jN_i = \prod_{j=1}^sx_j(\Sigma_{i=1}^sN_i)$.			
	\end{center}
As $\prod_{j=1}^sx_j$ is regular element of $C$, must have $\Sigma_{i=1}^sN_i = 0$ which means $N_i=0$ for all $i$; and so $P^tc_i=0$ for all $i$. Now the proof is complete.
\end{proof}

\begin{cor}\label{genGamma}
	
	Assume that $C$ is a $d$-dimensional Ulrich module and $P\in \Ass(C)$. Then any minimal generating set for $\Gamma_{P}(C)$ is a part of a minimal generating set for $C$.
\end{cor}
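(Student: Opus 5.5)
The plan is to show that the natural map $\Gamma_P(C)/\fm\Gamma_P(C)\lo C/\fm C$ is injective, i.e. that $\Gamma_P(C)\cap\fm C=\fm\Gamma_P(C)$. Granting this, Nakayama's lemma finishes the argument. The images of a minimal generating set of $\Gamma_P(C)$ form a basis of $\Gamma_P(C)/\fm\Gamma_P(C)$. By injectivity they stay linearly independent in $C/\fm C$, so they extend to a basis of $C/\fm C$, and any lift of that basis is a minimal generating set of $C$ containing the original set.

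The first step uses that the residue field is infinite (the standing assumption of this section) to choose a system of parameters $\q=x_1,\dots,x_d$ for $C$ with $\fm C=\q C$. Such a $\q$ exists because $C$ is Ulrich, as recalled right after the definition of Ulrich modules, and it is a $C$-regular sequence because $C$ is Cohen-Macaulay. Then
\[
\Gamma_P(C)\cap\fm C=\Gamma_P(C)\cap \q C=\q\,\Gamma_P(C)\subseteq \fm\Gamma_P(C),
\]
where the middle equality is Proposition~\ref{2A}. The reverse inclusion $\fm\Gamma_P(C)\subseteq \Gamma_P(C)\cap\fm C$ is trivial, so the required equality holds.

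There is no real obstacle once Proposition~\ref{2A} is available; the whole corollary reduces to it. The one point to check is that a single $\q$ can serve both as the reduction with $\fm C=\q C$ and as a $C$-regular sequence, and this follows from $C$ being Cohen-Macaulay. If $d=0$, we have $\fm C=0$, so the statement is immediate, since any minimal generating set of the submodule $\Gamma_P(C)$ of the $k$-vector space $C$ extends to a basis. The substantive content has already been proved in Proposition~\ref{2A}.
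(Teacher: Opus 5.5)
Your argument is the paper's own proof, and it inherits the paper's gap. The failing step is the middle equality $\Gamma_P(C)\cap\q C=\q\Gamma_P(C)$, i.e.\ Proposition~\ref{2A}, which is correct only for $d=1$.

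Since $\q$ is $C$-regular, the Koszul homology sequence of $0\lo\Gamma_P(C)\lo C\lo C/\Gamma_P(C)\lo 0$ gives $(\q C\cap\Gamma_P(C))/\q\Gamma_P(C)\cong H_1(\q;C/\Gamma_P(C))$. This vanishes iff $\q$ is regular on $C/\Gamma_P(C)$, i.e.\ iff $C/\Gamma_P(C)$ is Cohen--Macaulay (or zero). For $d=1$ this is automatic, because $\Ass(C/\Gamma_P(C))=\Ass(C)\setminus\{P\}$ forces positive depth, and the base case of Proposition~\ref{2A} is fine. For $d\ge 2$ the quotient need not be Cohen--Macaulay. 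The inductive step of Proposition~\ref{2A} is not valid: it applies the induction hypothesis to $\Gamma_P(C/x_iC)$ although $P\notin\Ass(C/x_iC)$. Moreover, the Ulrich hypothesis does not rescue even the weaker inclusion $\Gamma_P(C)\cap\fm C\subseteq\fm\Gamma_P(C)$ that you actually need. The statement itself is false for $d=2$.

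Here is a counterexample. Let $k$ be an infinite field and $A=k[[x,y,z,w]]/(xz,xw,yw)$, the Stanley--Reisner ring of the path with edges $\{x,y\},\{y,z\},\{z,w\}$. It is Cohen--Macaulay of dimension $2$ and multiplicity $3$. Let $C=(z,w)A$.

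\begin{enumerate}
\item From $0\lo C\lo A\lo k[[x,y]]\lo 0$, $C$ is Cohen--Macaulay of dimension $2$ with $\mathrm{e}^0_{\fm}(C)=3-1=2=\mu(C)$, so $C$ is Ulrich. Explicitly, for $\q=(x+z,y+w)$ one has $\fm C=\q C=(yz,z^2,zw,w^2)A$.
\item Let $P=(x,w)A=\Ann(yz)$, a prime in $\Ass(C)$. Then $\Gamma_P(C)=yzA$, and $C/\Gamma_P(C)\cong(z,w)k[[z,w]]$ has depth $1$.
\item We have $yz=(y+w)z-(x+z)w\in\q C=\fm C$, whereas $yz\notin yz\fm=\fm\Gamma_P(C)$.
\end{enumerate}

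So the minimal generating set $\{yz\}$ of $\Gamma_P(C)$ is not part of any minimal generating set of $C$. Your proof, and the statement, are correct for $d\le 1$, and more generally whenever $C/\Gamma_P(C)$ is Cohen--Macaulay, but not as stated.
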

\begin{proof}
	
	It is enough to prove that $\fm C\cap\Gamma_{P}(C)=\fm\Gamma_{P}(C)$ for  $P\in \Ass(C)$. Assume that $\q$ is a regular sequence for $C$ such that $\fm C=\q C$. Therefore 
	\begin{center}
		
	$	\fm C\cap\Gamma_{p}(C) =\q C\cap\Gamma_{P}(C) =\q\Gamma_{P}(C) \subseteq \fm\Gamma_{P}(C)$.
	\end{center}
\end{proof}

\begin{nota}\label{n1}\emph{
It is a well-known fact that, for an $R$-module $C$ and $P\in \Ass(C)$, $\Ass(\Gamma_{P}(C))=P$ and $\Ass(C/\Gamma_{P}(C))=\Ass(C) -\{ P \}$. Therefore $C_P = \Gamma_{PR_P}(C_P)$. Now, let $C$ be a Cohen-Macaulay $R$-module of dimension $d$ and $\Ass(C)= \{ P_1, P_2, \cdots, P_t\}$. It is straightforward to see that $\Gamma_{P_i}(C) \cap\Gamma_{P_j}(C)=0$ for all $i$ and $j$ such that $i\neq j $ and $1\leq i,j \leq t$. Therefore the summation $\Gamma_{P_1}(C)+\Gamma_{P_2}(C)+\cdots \Gamma_{P_t}(C)$ is direct. Set $G=\oplus_{i=1}^t\Gamma_{P_i}(C)$ as submodule of $C$ and consider the exact sequence $0\lo G\lo C \lo C/G \lo 0$. Then $P_i \notin \Ass(C/G)$ for all $i$, $1\leq i\leq t$ which means $\dim(C/G) < \dim(C)$. There is two different cases:
\begin{itemize}
	\item[(i)] $d=1$. Then $G$ is also Cohen-Macaulay of dimension one and $\Ass(C/G) \subseteq \{ \fm \}$.	
	\item[(ii)] $d>1$. This case, as $\dep(C/\Gamma_{P_i}(C))>0$ for all $P_i\in\Ass(C)$, we have $\dep(C/G)>0$. This means if $\dim(C)=2$ then all $\Gamma_{P_i}(C)$s are Cohen-Macaulay.
\end{itemize}}
\end{nota}
\begin{lem}\label{tor}
	Assume that $R$ is a local ring and  $M$ is an $R$-module such that $\dep(R)=\dim(M)>0$. If $\q$ is a regular sequence of length $s$ for $R$ such that $\Tor_1^R(R/\q,M)=0$ then $\dep(M)\geq s$. 
\end{lem}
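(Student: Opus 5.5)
We argue by induction on $s$, using the Koszul complex to control how $\Tor_1$ behaves as the sequence is built up one element at a time. Write $\q=(x_1,\dots,x_s)$ with $x_1,\dots,x_s$ an $R$-regular sequence, and set $\q_j=(x_1,\dots,x_j)$. Because the sequence is $R$-regular, the Koszul complex $K(x_1,\dots,x_s;R)$ is a free resolution of $R/\q$. Hence
\[
\Tor_i^R(R/\q,M)\cong H_i(x_1,\dots,x_s;M).
\]
The hypothesis $\Tor_1^R(R/\q,M)=0$ therefore says $H_1(\q;M)=0$. By rigidity of Koszul homology over a local ring, $H_1(\q;M)=0$ forces $H_i(\q;M)=0$ for all $i\geq 1$. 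Alternatively, one can use the standard fact that $H_1(\q;M)=0$ implies $x_1,\dots,x_s$ is an $M$-regular sequence, provided $M\neq 0$ and $\q\subseteq\fm$.

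The key step is to show that vanishing of $H_1$ makes $\q$ an $M$-regular sequence. I would argue by induction on $s$, using the long exact sequence
\[
\cdots\lo H_1(\q_{s-1};M)\xrightarrow{\pm x_s} H_1(\q_{s-1};M)\lo H_1(\q_s;M)\lo H_0(\q_{s-1};M)\xrightarrow{\pm x_s} H_0(\q_{s-1};M)\lo\cdots
\]
If $H_1(\q_s;M)=0$, then $x_s$ acts surjectively on the finitely generated module $H_1(\q_{s-1};M)$, so Nakayama gives $H_1(\q_{s-1};M)=0$. In addition, $x_s$ is injective on $H_0(\q_{s-1};M)=M/\q_{s-1}M$. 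Applying induction to $\q_{s-1}$ shows that $x_1,\dots,x_{s-1}$ is $M$-regular. Combined with the injectivity of $x_s$ on $M/\q_{s-1}M$, this makes $x_1,\dots,x_s$ an $M$-regular sequence. The base case $s=1$ is immediate: $H_1(x_1;M)=(0:_M x_1)$, so its vanishing means $x_1$ is a non-zero-divisor on $M$. Consequently $\dep(M)\geq s$.

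The main obstacle is making sure that the Koszul identification and the Nakayama step are legitimate. This requires $\q\subseteq\fm$, which holds since $\q$ is a regular sequence in a local ring, so its elements cannot be units. It also requires $M\neq 0$, which follows from $\dim(M)>0$. Once these are checked, the hypotheses $\dep(R)=\dim(M)$ are only needed to guarantee that $s\leq \dim(M)$, so that the conclusion is not vacuous and such sequences exist. The argument itself only uses the $R$-regularity of $\q$ and the vanishing of $\Tor_1$.
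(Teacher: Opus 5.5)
Your proof is correct and is essentially the paper's argument: the paper applies $M\otimes_R-$ to $0\lo R/\q'\overset{x_s}{\lo}R/\q'\lo R/\q\lo 0$, and the resulting $\Tor$ long exact sequence is exactly your Koszul sequence under the identification $\Tor_i^R(R/\q_j,M)\cong H_i(\q_j;M)$. It then uses the same two steps you do, namely Nakayama to kill $\Tor_1^R(R/\q',M)$ and injectivity of $x_s$ on $M/\q'M$, followed by induction; your passage through Koszul homology and the remark on rigidity are a harmless but unnecessary detour.
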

\begin{proof}
	Let $\q=x_1R+x_2+\cdots x_sR$ be a regular sequence for $R$. Set $\q'=x_1R+\cdots+x_{s-1}R$. Applying $M\otimes_R-$ over exact sequence $0\lo R/\q' \overset{\eta}{\lo} R/\q' \lo R/\q \lo 0$, with $\eta$ multiplicative map by $x_s$, we get long exact sequence
	\begin{center}
$\Tor_1^R(R/\q',M)\overset{\phi}{\lo}\Tor^R_1(R/\q',M)\lo\Tor_1^R(R/\q,M)\lo M/\q'M \overset{M\otimes \eta}{\lo}M/\q'M\lo M/\q M\lo 0$.		
	\end{center}
By assumption $\Tor_1^R(R/\q,M)=0$ which means 
\begin{itemize}
	\item[(i)] $M\otimes\eta$ is one to one which means so $x_s$ is regular over $M/\q'M$ and so $\dep(M)\geq1$. 
	\item[(ii)] $\phi$ is onto which means $\Tor^R_1(R/\q',M)=0$ as it is multiplicative by $x_s$.
\end{itemize}
Continuing this way, starting from $\q'$ instead of $\q$ in above discussion, the result will follows. 
\end{proof}
\begin{lem}\label{Uldep}
	Assume that $C$ is an Ulrich $R$-module and $P\in\Ass(C)$. Then, $\Gamma_{P}(C)$ is Ulrich if and only if it is Cohen-Macaulay. In particular, if $\dep(R) \geq \dim(C)$ then $\Gamma_{P}(C)$ is Ulrich.
\end{lem}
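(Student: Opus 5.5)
We prove Lemma~\ref{Uldep} by using the description of Ulrich modules through a parameter ideal $\fq$ of $C$ with $\fm C=\fq C$, whose image in $R/\Ann(C)$ is a minimal reduction of the maximal ideal (residue field infinite). Set $d=\dim(C)$. Since $\Gamma_P(C)\subseteq C$ is a nonzero submodule with $\Ass(\Gamma_P(C))=\{P\}$ (Notation~\ref{n1}), and $P$ is minimal of dimension $d$ because $C$ is Cohen-Macaulay, we have $\dim\Gamma_P(C)=d$. The elements of $\fq$ are therefore also a system of parameters for $\Gamma_P(C)$.

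One direction is immediate, since Ulrich modules are Cohen-Macaulay by definition. For the converse, assume $\Gamma_P(C)$ is Cohen-Macaulay. By Proposition~\ref{2A} applied to the regular sequence $\fq$ of $C$, and as in the proof of Corollary~\ref{genGamma},
\[
\fm\Gamma_P(C)\subseteq \fm C\cap \Gamma_P(C)=\fq C\cap\Gamma_P(C)=\fq\Gamma_P(C)\subseteq \fm\Gamma_P(C).
\]
Hence $\fm\Gamma_P(C)=\fq\Gamma_P(C)$ with $\fq$ a parameter ideal of the $d$-dimensional Cohen-Macaulay module $\Gamma_P(C)$. So $\Gamma_P(C)$ is Ulrich, since then $\e^0_\fm(\Gamma_P(C))=\l(\Gamma_P(C)/\fq\Gamma_P(C))=\mu(\Gamma_P(C))$, using that $\fq$ generates a reduction of $\fm$ modulo $\Ann(C)\subseteq\Ann(\Gamma_P(C))$.

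For the ``in particular'' part, assume $\dep(R)\geq\dim(C)$; we must show $\Gamma_P(C)$ is Cohen-Macaulay. We aim to apply Lemma~\ref{tor} to $M=\Gamma_P(C)$. First choose $\fq=(x_1,\dots,x_d)$ to be simultaneously a reduction parameter ideal for $C$ and an $R$-regular sequence. This is possible by prime avoidance with infinite residue field, choosing general elements: $\dep(R)\ge d$ lets us avoid the associated primes of $R/(x_1,\dots,x_{i})$ at each step, while keeping the superficial/reduction condition for $C$. This simultaneous choice is the delicate point. Also, $\dim\Gamma_P(C)=d\leq\dep(R)$, and Lemma~\ref{tor} is really used for $s=\dim M$ regular elements; we may shrink the hypothesis to what the proof of that lemma actually uses.

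It remains to verify the hypothesis of Lemma~\ref{tor}, namely $\Tor_1^R(R/\fq,\Gamma_P(C))=0$. Apply $R/\fq\otimes-$ to $0\to\Gamma_P(C)\to C\to C/\Gamma_P(C)\to0$. Since $\fq$ is $R$-regular, $\Tor_i^R(R/\fq,C)$ is the Koszul homology $H_i(\fq;C)$, which vanishes for $i\ge1$ because $\fq$ is $C$-regular. The long exact sequence gives
\[
0\to\Tor_2(R/\fq,C/\Gamma_P(C))\to\Tor_1(R/\fq,\Gamma_P(C))\to 0\to \Tor_1(R/\fq,C/\Gamma_P(C))\to \Gamma_P(C)/\fq\Gamma_P(C)\to C/\fq C.
\]
The last map is injective precisely by Proposition~\ref{2A}. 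Thus $\Tor_1(R/\fq,C/\Gamma_P(C))=0$, so by Lemma~\ref{tor} (rigidity of Koszul homology) $\fq$ is regular on $C/\Gamma_P(C)$, and then $\Tor_2(R/\fq,C/\Gamma_P(C))=H_2=0$. Hence $\Tor_1(R/\fq,\Gamma_P(C))=0$, and Lemma~\ref{tor} gives $\dep\Gamma_P(C)\ge d$, so $\Gamma_P(C)$ is Cohen-Macaulay. If the dimension hypothesis of Lemma~\ref{tor} fails for $C/\Gamma_P(C)$, we instead use directly that $H_1(\fq;N)=0$ forces $\fq$ to be $N$-regular for a finitely generated module over a local ring. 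The main obstacle is this simultaneous choice of $\fq$ and the rigidity step.
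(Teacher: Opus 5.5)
Your argument follows the paper's proof step for step. The ``if'' direction goes through Proposition~\ref{2A} ($\fm\Gamma_P(C)\subseteq\fq C\cap\Gamma_P(C)=\fq\Gamma_P(C)$). The ``in particular'' goes through $\Tor_1^R(R/\fq,C/\Gamma_P(C))=0$ (again Proposition~\ref{2A}) plus rigidity as in Lemma~\ref{tor}. You are even more careful than the paper about choosing $\fq$ both $R$-regular and a reduction.

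\textbf{The gap.} That load-bearing input is false once $\dim C\geq 2$, and the lemma itself fails. (The printed proof of Proposition~\ref{2A} conflates the elementwise relations coming from $ra=0$, $r\in P^t$, with an identity of submodules.) Here is a counterexample. Let $k$ be an infinite field, $A=k[[x_1,x_2]]$, $R=A[[y]]$ (regular, so $\dep R=3$), and $C=A^3$ with $y$ acting by
\[
M=\begin{pmatrix} x_1 & x_2 & x_2\\ 0 & 0 & -x_1\\ 0 & 0 & 0\end{pmatrix}.
\]
This action is well defined because the entries of $M$ lie in $(x_1,x_2)$. For the same reason $\fm C=(x_1,x_2)C$, so $C$ is Ulrich of dimension $2$ with $\mu(C)=\e_{\fm}^0(C)=3$.

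Put $g=(x_2,-x_1,0)^{T}$. Then $Mg=0$, so $P=yR\in\Ass(C)$. Since $M^2v=x_1(x_1v_1+x_2v_2)e_1$ and $M^n=x_1^{n-2}M^2$ for $n\geq 2$, you get $\Gamma_P(C)=\bigcup_n\ker M^n=\ker M^2=Ag\oplus Ae_3$. This is $A$-free of rank $2$, hence Cohen--Macaulay of dimension $2$. But $ye_3=g$, so $\Gamma_P(C)=Re_3\cong R/y^2R$, which has $\mu=1<2=\e_{\fm}^0(R/y^2R)$.

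So both the implication ``Cohen--Macaulay $\Rightarrow$ Ulrich'' and the ``in particular'' fail. Concretely, take $\fq=(x_1,x_2)$, which is regular on $R$ and on $C$ and satisfies $\fq C=\fm C$. Then $g\in\fq C\cap\Gamma_P(C)$ but $g\notin\fq\Gamma_P(C)$. The injectivity of $\Gamma_P(C)/\fq\Gamma_P(C)\to C/\fq C$ that you and the paper both use breaks down; indeed $C/\Gamma_P(C)\cong(x_1,x_2)A$ has depth $1$.

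Your closing remark was already a warning sign. If Proposition~\ref{2A} held for every $C$-regular sequence, Koszul rigidity alone would make $C/\Gamma_P(C)$, hence $\Gamma_P(C)$, Cohen--Macaulay for every Cohen--Macaulay $C$, with no hypothesis on $\dep R$ at all.

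\textbf{What survives.} Dimension one is fine. There the $d=1$ case of Proposition~\ref{2A} is correct: $xb\in\Gamma_P(C)$ forces $b\in\Gamma_P(C)$ since $x$ is $C$-regular. So your ``if'' argument is valid. Moreover, $\Gamma_P(C)$ is automatically Cohen--Macaulay as a nonzero submodule of a depth-one module. Thus for $\dim C=1$ every $\Gamma_P(C)$ is Ulrich, without the $\Tor$/rigidity step or any assumption on $\dep R$. For $\dim C\geq 2$ no argument of this shape can work, because the statement is false.
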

\begin{proof}
	The only if part is clear as every Ulrich module is Cohen-Macaulay. For if part, let $\Gamma_{P}(C)$ be Cohen-Macaulay and $\q$ a superficial sequence for $C$, which means $\fm C=\fq C$. Since $\q$ is also a regular $\Gamma_{P}(C)$, it is enough to show that $\fm\Gamma_{P}(C)\subseteq\q\Gamma_{P}(C)$. Applying $R/\q$ over exact sequence $0\lo \Gamma_{P}(C)\lo C\lo C/\Gamma_{P}(C)\lo 0$ we get long exact sequence
	\begin{center}
		$0\lo \Tor_1^R(R/\q,C/\Gamma_{P}(C)) \lo \Gamma_{P}(C)/\q\Gamma_{P}(C) \overset{\eta}{\lo}  C/\q C\lo C/\q C+\Gamma_{P}(C)\lo 0$.
	\end{center} 
	Note that as $\Gamma_{P}(C)\cap\q C=\q\Gamma_{P}(C)$ by---, $\eta$ is one to one and so $\Tor_1^R(R/\q,C/\Gamma_{P}(C))=0$. Also, as $\fq C=\fm C$, if $a\in\fm\Gamma_{P}(C)$ then $\eta(a)\in\fm C=\q C$ which means $a\in\q\Gamma_{P}(C)$. 
	
	In particular, assume that $\dep(R)\geq\dim(C)$. Then $\dep(R)\geq\dim(C/\Gamma_{P}(C))$ and, as $\Tor_1^R(R/\q,C/\Gamma_{P}(C))=0$, by Lemma\ref{tor} we have $C/\Gamma_{P}(C)$ is Cohen-Macaulay; either $\Gamma_{P}(C)$. 
\end{proof}
Now we are ready to main Theorem of this section.
\begin{thm}\label{main}Assume that $C$ is an Ulrich $R$-module of dimension $d>0$ and $\Ass(C) = \{ P_1 , P_2 , \cdots , P_s \}$. Then $\oplus_{i=1}^s\Gamma_{P_i}(C)$ minimally generated by a part of a minimal generating set for $C$. If $\dep(R)\geq\dim(C)$ then  $C=\oplus_{i=1}^s\Gamma_{P_i}(C)$.
	
\end{thm}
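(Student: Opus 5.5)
Write $G=\oplus_{i=1}^s\Gamma_{P_i}(C)$, which is a direct sum inside $C$ by Notation~\ref{n1}. Fix a regular sequence $\q$ for $C$ with $\fm C=\q C$; it exists because the residue field is infinite and $C$ is Ulrich.

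\emph{First assertion.} I want to show $\fm C\cap G=\fm G$. Then $G/\fm G\to C/\fm C$ is injective, so a minimal generating set of $G$ extends to one of $C$. The natural route is to prove $\q C\cap G=\q G$, which generalizes Proposition~\ref{2A}. For $d=1$, $\q=xR$: if $a=xb\in G$, write $a=\sum a_i$ with $a_i\in\Gamma_{P_i}(C)$. A single $t$ gives $(P_1\cdots P_s)^t a=0$, hence $(P_1\cdots P_s)^t b=0$ because $x$ is $C$-regular. Then $b\in\Gamma_{P_1\cdots P_s}(C)$. It remains to check that this module equals $G$, i.e. that any element killed by a power of $P_1\cdots P_s$ splits into its $P_i$-torsion pieces.

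That step is the real obstacle, and it fails in general: only $C/G$ of smaller dimension is guaranteed, not $C=G$. So I would argue differently. Take the exact sequence $0\to G\to C\to C/G\to 0$ and reduce mod $\q$. Since $\q$ is $C$-regular, $\Tor_1^R(R/\q,C)=0$, and the kernel of $G/\q G\to C/\q C$ is $\Tor_1^R(R/\q,C/G)$. To control this kernel, use Corollary~\ref{genGamma}: each summand $\Gamma_{P_i}(C)$ satisfies $\fm C\cap\Gamma_{P_i}(C)=\fm\Gamma_{P_i}(C)$. Then apply induction on the number of summands to the torsion-free part $C/\Gamma_{P_1}(C)$. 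By Notation~\ref{n1} its associated primes are $P_2,\dots,P_s$, and $\Gamma_{P_j}(C)$ injects into it. The catch is that $C/\Gamma_{P_1}(C)$ need not be Ulrich without a depth hypothesis, so this induction may fail.

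\emph{Second assertion} (the case $\dep R\ge\dim C$). By Lemma~\ref{Uldep}, every $\Gamma_{P_i}(C)$ is Ulrich. Its proof also gives $\Tor_1^R(R/\q,C/\Gamma_{P_i}(C))=0$, so $C/\Gamma_{P_i}(C)$ is Cohen--Macaulay by Lemma~\ref{tor}. It is Ulrich because it is a quotient of $C/\q C$ killed by $\fm$. Its associated primes are $\Ass C\setminus\{P_i\}$, so induction on $s$ applies to it. Since $\Gamma_{P_j}(C)$ injects into $\Gamma_{P_j}(C/\Gamma_{P_1}(C))$ for $j\ne1$, induction yields
\[
C/\Gamma_{P_1}(C)=\oplus_{j\ge2}\Gamma_{P_j}(C/\Gamma_{P_1}(C)).
\]

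Next I compare multiplicities. Additivity of $\e^0_\fm$ on the sequences above, together with the associativity formula, gives $\e(C)=\sum_i\e(\Gamma_{P_i}(C))$, because $\dim C/G<d$. Ulrichness of $C$ and of each $\Gamma_{P_i}(C)$ then gives $\mu(C)=\sum_i\mu(\Gamma_{P_i}(C))=\mu(G)$. From the first assertion, $G/\fm G\to C/\fm C$ is injective, and it is also surjective because both sides have the same length. Hence $G+\fm C=C$, and Nakayama gives $G=C$.

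This multiplicity-counting argument also yields the first assertion directly whenever the summands are Ulrich. So the step I expect to be hardest is the first assertion without any depth hypothesis, namely proving $\fm C\cap G=\fm G$ for the full sum rather than for one summand at a time. I would first try to prove $\q C\cap G=\q G$ by induction on $s$, copying the argument of Proposition~\ref{2A} summand by summand.
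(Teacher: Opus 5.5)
Your proposal has a genuine gap, exactly where you suspected it. You never prove that $G/\fm G\to C/\fm C$ is injective (equivalently, $\fm C\cap G=\fm G$). Yet your proof of the second assertion uses this injectivity in its final Nakayama step, and the count $\mu(G)=\mu(C)$ cannot replace it. The gap cannot be filled, because both assertions of the statement are false.

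Let $k$ be an infinite field, $R=k[[x,y]]/(y^2-x^2y)$, and $C=\fm$. Geometrically, $R$ consists of two smooth branches $y=0$ and $y=x^2$ with a common tangent line.
\begin{itemize}
\item Since $y^2=x\cdot xy$ in $R$, we have $\fm^2=x\fm$. So $\fm$ is Ulrich with $\mu(\fm)=\mathrm{e}^0_{\fm}(R)=2$, and $\dep R=\dim\fm=1$.
\item Here $\Ass R=\{P_1,P_2\}$ with $P_1=yR$ and $P_2=(y-x^2)R$. Unique factorization in $k[[x,y]]$ gives $\Gamma_{P_1}(\fm)=(0:_R y)=(y-x^2)R$ and $\Gamma_{P_2}(\fm)=(0:_R (y-x^2))=yR$.
\item Hence $G=(y,x^2)R\subsetneq\fm$, with $\fm/G\cong k$.
\item Moreover $y-x^2\equiv y$ modulo $\fm^2$. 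So the two minimal generators of $G$ have the same image in $\fm/\fm^2$, and $G/\fm G\to\fm/\fm^2$ has a nonzero kernel. Thus the first assertion fails as well.
\end{itemize}
Note that each $\Gamma_{P_i}(\fm)\cong k[[x]]$ is Ulrich, $C/\Gamma_{P_1}(C)$ (the maximal ideal of $R/P_2\cong k[[x]]$) is Ulrich, and $\mu(G)=2=\mu(\fm)$. So every intermediate conclusion of your second-part argument holds, including your induction step, and still $C\neq G$. The same example contradicts the ``if'' direction of Proposition~\ref{UlAll}.

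The paper's proof has the same hole.
\begin{itemize}
\item It obtains the first claim ``by Corollary~\ref{genGamma} and Notation~\ref{n1}''. These give $\fm C\cap\Gamma_{P_i}(C)=\fm\Gamma_{P_i}(C)$ only one summand at a time. They say nothing about whether the images of different summands in $C/\fm C$ are independent; in the example each summand passes individually.
\item In the second part it appeals to ``the same discussion as in Lemma~\ref{Uldep}'' for $G$. That would need $\mathfrak{q}C\cap G=\mathfrak{q}G$ for the whole sum, whereas Proposition~\ref{2A} proves this only for a single $\Gamma_P(C)$. In the example, $x^2=x\cdot x\in xC\cap G$ but $x^2\notin xG=(xy,x^3)R$.
\end{itemize}
So your diagnosis was right. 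In general one only gets $\dim C/G<\dim C$ (in dimension one, $C/G$ has finite length). The conclusion $C=G$ needs a hypothesis that the statement does not contain.
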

\begin{proof}
	By Corollary\ref{genGamma} and Notation\ref{n1}, $\oplus_{1}^s(\Gamma_{P_i}(C))$ generates minimally by part of a minimal generating set for $C$. Set $G=\oplus_{1}^s(\Gamma_{P_i}(C))$. If $C= \oplus_{1}^s(\Gamma_{P_i}(C))$ then we are done. Otherwise, choose $a_1\in C$ such that $a_1\notin\fm C\cup\oplus_{1}^s(\Gamma_{P_i}(C))$. Continuing this way, we get $a_1, \cdots, a_t$ such that $C=\oplus_{1}^s(\Gamma_{P_i}(C))+\Sigma_{j=1}^ta_j$. So, we may set $N=a_1R+\cdots +a_tR$.
	
	 If $\dep(R)\geq\dim(C)$ then $\Gamma_{P_i}(C)$ is Ulrich for all $P_i\in\Ass(C)$. If $\q$ is a regular sequence of maximal length for $C$ which is also a regular sequence for $R$, applying $R/\q\otimes-$ over $0\lo G\lo C\lo C/G\lo0$ we get $\Tor_1^R(R/\fq,C/G)=0$ by the same discussion as in proof of Lemma\ref{Uldep}. Therefore, by Lemma\ref{tor} we have $\dep(C/G) \geq \dim(C)$ which is a contradiction since $\dim(C/G)<\dim(C)$. Therefore $C/G=0$ and the result follows.

\end{proof}
\begin{cor}
	Assume that $C$ is Ulrich $R$-module of dimension $d>0$ and that $\Ass(D) = \{ P_1 , P_2 , \cdots , P_s \}$. Let $N$ be submodule of $C$ such that  $C = (\oplus_{i=1}^s\Gamma_{P_i}(C))+N$ minimally by method describes in proof of Theorem\ref{main}. Then
	\begin{itemize}
		\item[(i)] $\mu(C) = \Sigma_{i=1}^s\mu(\Gamma_{P_i}(C)) +\mu(N) $. In case $N\neq0$ we have $\mu(C/N)=\Sigma_{i=1}^s\mu(\Gamma_{P_i}(C))$. In particular $\mu(C) \geq s$.
	 \item[(ii)] If $R$ is Cohen-Macaulay and $C$ an Ulrich $R$-module then $C=\oplus_{i=1}^s\Gamma_{P_i}(C)$.
	\end{itemize}
\end{cor}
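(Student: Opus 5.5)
Both parts come from Theorem~\ref{main} together with the construction in its proof. Set $G=\oplus_{i=1}^s\Gamma_{P_i}(C)$, so that $C=G+N$ with $N=a_1R+\cdots+a_tR$ built as in that proof.

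For (i), I will use the first assertion of Theorem~\ref{main}: $G$ is minimally generated by a subset of a minimal generating set of $C$. Equivalently, by Corollary~\ref{genGamma} summed over the direct sum in Notation~\ref{n1}, $\fm C\cap G=\fm G$. The elements $a_j$ were chosen one at a time, each outside $\fm C$ plus the previously chosen generators, so that together with a minimal basis of $G$ they form a minimal generating set of $C$. This gives
\[
\mu(C)=\mu(G)+t=\Sigma_{i=1}^s\mu(\Gamma_{P_i}(C))+\mu(N).
\]
Here $\mu(G)=\Sigma\mu(\Gamma_{P_i}(C))$ because $G$ is a direct sum.

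For $\mu(C/N)$, I will argue that the images of the minimal generators of $G$ generate $C/N$ minimally. They generate because $C=G+N$. For minimality, suppose a relation among these images lies in $\fm(C/N)$; lift it to $C$. The lifted relation expresses a minimal generator of $G$ modulo $\fm C+N$. Since the whole chosen set is a minimal generating set of $C$, its image in $C/\fm C$ is a basis, and a nontrivial dependence modulo $\fm C+N$ is impossible. Hence $\mu(C/N)=\mu(G)$.

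For $\mu(C)\geq s$: each $P_i\in\Ass(C)$ gives $\Gamma_{P_i}(C)\neq0$, so each summand needs at least one generator.

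For (ii), suppose $R$ is Cohen-Macaulay. Then $\dep(R)=\dim R\geq\dim C$, so the second assertion of Theorem~\ref{main} gives $C=G$ directly.

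The only delicate step is the minimality claim for $\mu(C/N)$, which needs a small linear-algebra check over $C/\fm C$. The rest is bookkeeping from Theorem~\ref{main}.
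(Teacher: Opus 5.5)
Your outline follows the paper's own proof. For (ii) the paper likewise notes $\dep(R)=\dim R\geq\dim C$ and invokes Theorem~\ref{main}. For (i) it uses $C/N\cong G/(G\cap N)$ with $G\cap N\subseteq\fm G$, which is the same linear algebra in $C/\fm C$ that you carry out.

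The trouble is the input you take from Theorem~\ref{main}. Your justification ``Corollary~\ref{genGamma} summed over the direct sum gives $\fm C\cap G=\fm G$'' does not work. Knowing $\fm C\cap\Gamma_{P_i}(C)=\fm\Gamma_{P_i}(C)$ for each $i$ says nothing about an element $g_1+\cdots+g_s\in\fm C$ whose summands $g_i\in\Gamma_{P_i}(C)$ are not themselves in $\fm C$. The paper's proof of Theorem~\ref{main} has the same gap: its claim $\Tor_1^R(R/\fq,C/G)=0$ needs $\fq C\cap G=\fq G$, and Proposition~\ref{2A} only gives this one summand at a time. Moreover, the conclusion is actually false, so (ii) cannot be proved.

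\textbf{Counterexample.} Take $k$ infinite, $R=k[[x,y]]/(y(y-x^2))$ and $C=\fm$.
\begin{itemize}
\item $R$ is a reduced one-dimensional hypersurface with $\Ass(R)=\{P_1,P_2\}$, where $P_1=(y)$ and $P_2=(y-x^2)$.
\item The relation $y^2=x^2y$ gives $\fm^2=x\fm$, so $C$ is Ulrich with $\mu(C)=\mathrm{e}^0_{\fm}(R)=2$.
\item $x$ is a non-zero-divisor, and $y^n=x^{2(n-1)}y$, $(y-x^2)^n=(-x^2)^{n-1}(y-x^2)$. Hence $\Gamma_{P_1}(R)=(0:_Ry)=(y-x^2)R$ and $\Gamma_{P_2}(R)=yR$, both inside $\fm$.
\end{itemize}
Therefore
\[
G=\Gamma_{P_1}(C)\oplus\Gamma_{P_2}(C)=(y-x^2,\,y)R=(x^2,y)R\subsetneq\fm=C,
\]
contradicting (ii).

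The same example breaks (i). We have $x^2=y-(y-x^2)\in G\cap\fm C$, while $x^2\notin\fm G=(x^3,xy)R$. So $G/\fm G\to C/\fm C$ is not injective, and no minimal generating set of $G$ extends to one of $C$. With the paper's literal construction ($a_1=x\notin\fm C\cup G$, $N=xR$), both formulas in (i) fail:
\begin{itemize}
\item $\mu(C)=2<3=\mu(\Gamma_{P_1}(C))+\mu(\Gamma_{P_2}(C))+\mu(N)$;
\item $\mu(C/N)=1<2=\mu(G)$.
\end{itemize}

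\textbf{What survives.} Your conditional linear algebra in (i) is correct: if a minimal basis of $G$ together with $a_1,\dots,a_t$ is a minimal generating set of $C$, then $\mu(N)=t$ and $\mu(C/N)=\mu(G)$. The bound $\mu(C)\geq s$ is also true, but it should be proved directly from the associativity formula rather than from the splitting:
\[
\mu(C)=\mathrm{e}^0_{\fm}(C)=\sum_i\ell(C_{P_i})\,\mathrm{e}^0_{\fm}(R/P_i)\geq s .
\]
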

\begin{proof}
	(i). Note that if $N\neq0$ then $\mu(C/N) = \oplus_{i=1}^s\mu((\Gamma_{P_i}(C)/(\oplus_{i=1}^s\Gamma_{P_i}(C))\cap N)$. By the method of choosing $N$, $\oplus_{i=1}^s\Gamma_{P_i}(C)\cap N\subseteq \fm \oplus_{i=1}^s\Gamma_{P_i}(C)$ and so is non-generator.
	
	(ii). If $R$ is Cohen-Macaulay then $\dep(R)=\dim(R)\geq\dim(C)$. Now the result follows by Theorem\ref{main}.
\end{proof}

\begin{cor}
	Assume that $(R,\fm)$ is a one-dimensional Cohen-Macaulay local ring, with infinite residue field, and that $\Ass(R)$ is not singleton. If $R$ has minimal multiplicity then  $R$ has decomposable maximal ideal.
\end{cor}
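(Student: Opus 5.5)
Since $R$ has minimal multiplicity, the maximal ideal $\fm$ is Ulrich as an $R$-module: with a minimal reduction $xR$ of $\fm$ (available because the residue field is infinite) we have $\fm^2 = x\fm$. Equivalently, by Remark\ref{1Rem}(1) with $\re(\fm)\le 1$, $\mu(\fm)=\e_{\fm}^0(R)$. Also, $\fm$ is a nonzero ideal of a one-dimensional Cohen-Macaulay ring, so it is a maximal Cohen-Macaulay module of dimension $1$. This lets us apply the main decomposition result to $C=\fm$.

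\textbf{Step 1: $\Ass(\fm)=\Ass(R)$.} Since $\fm\subseteq R$, we have $\Ass(\fm)\subseteq\Ass(R)$. Conversely, take $P\in\Ass(R)$, say $P=(0:r)$ for some $r\in R$. Because $\dim R=1$, $P\neq\fm$, so we can pick a non-zero-divisor $x\in\fm$. Then $xr\in\fm$ and $(0:xr)=(0:r)=P$, since $x$ is regular. Hence $P\in\Ass(\fm)$. Write $\Ass(R)=\{P_1,\dots,P_s\}$ with $s>1$ by hypothesis.

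\textbf{Step 2: decompose.} Since $R$ is Cohen-Macaulay, $\dep(R)=1=\dim(\fm)$. By Theorem\ref{main} (equivalently part (ii) of the preceding corollary),
\[
\fm=\oplus_{i=1}^s\Gamma_{P_i}(\fm).
\]

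\textbf{Step 3: nontriviality of the summands.} For a genuine decomposition we need each $\Gamma_{P_i}(\fm)\neq 0$. By Notation\ref{n1}, $\Ass(\Gamma_{P_i}(\fm))=\{P_i\}$, which is nonempty because $P_i\in\Ass(\fm)$. So each $\Gamma_{P_i}(\fm)$ is nonzero. Since $s\ge 2$, $\fm$ is a direct sum of at least two nonzero submodules, i.e.\ $\fm$ is decomposable.

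The only point needing care is Step 3, together with the identification $\Ass(\fm)=\Ass(R)$ in Step 1. This is exactly where the hypothesis that $\Ass(R)$ is not a singleton is used. Everything else is a direct application of Theorem\ref{main}.
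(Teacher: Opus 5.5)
Your proof follows the paper's own route: minimal multiplicity gives $\fm^2=x\fm$, so $\fm$ is Ulrich, and then Theorem~\ref{main} is applied to $C=\fm$. Your Steps 1 and 3 are correct bookkeeping that the paper skips. The gap is Step 2. Theorem~\ref{main} is false, and so is the corollary itself.

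\textbf{Counterexample.} Let $k$ be an infinite field and $R=k[[x,y]]/(y^2-x^2y)$ (the tacnode).
\begin{enumerate}
\item[(a)] $R$ is a reduced one-dimensional hypersurface with $\Ass(R)=\{P_1,P_2\}$, where $P_1=yR$ and $P_2=(y-x^2)R$.
\item[(b)] Since $y^2=x\cdot xy$, we have $\fm^2=(x^2,xy,y^2)=x\fm$. So $R$ has minimal multiplicity and $\fm$ is Ulrich.
\item[(c)] Because $R$ is reduced, $\Gamma_{P_1}(\fm)=P_2$ and $\Gamma_{P_2}(\fm)=P_1$. Hence $\Gamma_{P_1}(\fm)\oplus\Gamma_{P_2}(\fm)=(y,x^2)R\neq\fm$, since $R/(y,x^2)R\cong k[x]/(x^2)$.
\item[(d)] In fact $\fm$ is indecomposable. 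Suppose $\fm=I\oplus J$ with $I,J\neq 0$. Then $IJ\subseteq I\cap J=0$, so each $P_i$ contains $I$ or $J$. Neither $I$ nor $J$ can lie in $P_1\cap P_2=0$. So after relabeling $I\subseteq P_1$ and $J\subseteq P_2$, giving $\fm=I+J\subseteq(y,x^2)R$, a contradiction.
\end{enumerate}

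\textbf{Where the paper's proof breaks.} The error is in passing from a single summand to the sum. Proposition~\ref{2A} and Corollary~\ref{genGamma} give $\fq C\cap\Gamma_{P_i}(C)=\fq\Gamma_{P_i}(C)$ and $\fm C\cap\Gamma_{P_i}(C)=\fm\Gamma_{P_i}(C)$ for each $i$ separately. The proof of Theorem~\ref{main} needs both equalities for $G=\oplus_i\Gamma_{P_i}(C)$:
\begin{enumerate}
\item[(i)] the first, to get $\Tor_1^R(R/\fq,C/G)=0$ ``as in Lemma~\ref{Uldep}'';
\item[(ii)] the second, for the claim that $G$ is minimally generated by part of a minimal generating set of $C$.
\end{enumerate}
Neither follows from the summand-by-summand statements, and both fail in the example. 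There $x\fm\cap G=(x^2,xy)R$, while $xG=(xy,x^3)R$ does not contain $x^2$: in $R/(xy,x^3)R\cong k[[x,y]]/(y^2,xy,x^3)$ the class of $x^2$ is nonzero. Equivalently, the generators $y$ and $y-x^2$ of the two summands have the same image in $\fm/\fm^2$. So Step 2 cannot be repaired by a better argument, since the conclusion itself fails without additional hypotheses.
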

\begin{proof}
	As $R/\fm$ is infinite, there exists superficial element $x\in\fm$ such that $\fm^2 =x\fm$ which means $\fm$ is Ulrich as an $R$-module. Now the result follows by Theorem\ref{main}
\end{proof}

As it is clear, the corollary above does not work when $R$ is a domain; like numerical semigroup rings.

\begin{prop}\label{UlAll}
	Assume that $C$ is an Ulrich $R$-module of dimension $d$ and $\Ass(C)=\{P_1, P_2, \cdots, P_s\}$. Then $C=\oplus_{i=1}^s(\Gamma_{P_i}(C))$ if and only if $\Gamma_{P_i}(C)$ is Ulrich for all $i$.
\end{prop}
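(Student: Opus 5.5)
Both directions will be proved directly, using Lemma \ref{Uldep} together with the fact that a direct summand of a Cohen--Macaulay module of dimension $d$ is either zero or Cohen--Macaulay of dimension $d$. Throughout, fix a sequence $\q$ of length $d$ that is a system of parameters (hence a regular sequence) for $C$ with $\fm C=\q C$. Such a $\q$ exists because the residue field is infinite and $C$ is Ulrich.

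\emph{($\Rightarrow$).} Assume $C=\oplus_{i=1}^s\Gamma_{P_i}(C)$. Each $\Gamma_{P_i}(C)$ is nonzero because $P_i\in\Ass(C)$. It is a direct summand of $C$, so $\Ext^j_R(R/\fm,\Gamma_{P_i}(C))$ is a direct summand of $\Ext^j_R(R/\fm,C)$. Since $C$ is Cohen--Macaulay of dimension $d$, this gives $\dep\Gamma_{P_i}(C)\ge d$. Also $\dim\Gamma_{P_i}(C)\le d$, and in fact $\dim\Gamma_{P_i}(C)=\dim R/P_i=d$ because associated primes of a Cohen--Macaulay module are minimal of dimension $d$. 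Hence each $\Gamma_{P_i}(C)$ is Cohen--Macaulay, and by Lemma \ref{Uldep} it is Ulrich. One could instead check the Ulrich property directly: $\fm C=\q C$ splits componentwise as $\fm\Gamma_{P_i}(C)=\q\Gamma_{P_i}(C)$, which also works.

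\emph{($\Leftarrow$).} Assume every $\Gamma_{P_i}(C)$ is Ulrich, and set $G=\oplus_i\Gamma_{P_i}(C)$. The plan is to show $C/G=0$. I would imitate the proof of Theorem \ref{main}, but replace the hypothesis $\dep R\ge\dim C$ by the Cohen--Macaulayness of the summands.

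First, $G$ is a finite direct sum of Cohen--Macaulay modules of dimension $d$, so $G$ is Cohen--Macaulay and $\q$ is $G$-regular. Next, Proposition \ref{2A} gives $\q C\cap\Gamma_{P_i}(C)=\q\Gamma_{P_i}(C)$. To obtain $\q C\cap G=\q G$, I would induct on $s$, in the form $\q C\cap(\Gamma_{P_1}(C)\oplus\cdots)$. For the inductive step I would pass to $C/\Gamma_{P_1}(C)$. This module is Cohen--Macaulay: by the depth lemma, using $\dep\Gamma_{P_1}(C)=d$, it has depth $\ge d-1$. In dimension one, Notation \ref{n1}(i) gives this directly, and in general one should handle it through the $\Tor_1$ computation.

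This is the main obstacle. I expect to avoid the induction by the $\Tor$ argument of Lemma \ref{Uldep}, which yields $\Tor_1^R(R/\q,C/G)=0$ once the map $G/\q G\to C/\q C$ is injective. The key identity is then
\[
\l(G/\q G)=\e_{\fm}^0(G)=\textstyle\sum_i\e_{\fm}^0(\Gamma_{P_i}(C))=\e_{\fm}^0(C)=\l(C/\q C).
\]
The third equality holds by additivity of multiplicity, since $\dim C/G<d$ by Notation \ref{n1}.

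Granting injectivity, $G/\q G\to C/\q C$ is an injection between modules of equal finite length, so it is an isomorphism. Its cokernel $C/(\q C+G)$ therefore vanishes. Nakayama's lemma applied to $C/G$, using $\q\subseteq\fm$, gives $C/G=0$, so $C=G$.

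If proving injectivity directly proves awkward, here is a fallback. By Corollary \ref{genGamma}, $G$ is minimally generated by part of a minimal generating set of $C$, so $\mu(G)\le\mu(C)$. On the other hand, Ulrichness of the summands gives
\[
\mu(G)=\sum_i\mu(\Gamma_{P_i}(C))=\sum_i\e_{\fm}^0(\Gamma_{P_i}(C))=\e_{\fm}^0(C)=\mu(C),
\]
again by additivity of $\e_{\fm}^0$ over the filtration with $\dim C/G<d$. Hence $G$ contains a full minimal generating set of $C$, and $G=C$. This second route seems cleanest, and I would present it as the main argument.
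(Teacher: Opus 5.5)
There is a genuine gap in your ``if'' direction, and it cannot be repaired. Your ``only if'' direction is fine and is essentially the paper's argument; your componentwise check $\fm\Gamma_{P_i}(C)=\mathfrak{q}\Gamma_{P_i}(C)$ is also a valid alternative to Lemma~\ref{Uldep}.

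Both of your ``if'' routes need $\fm C\cap G=\fm G$ for $G=\oplus_i\Gamma_{P_i}(C)$, or equivalently injectivity of $G/\mathfrak{q}G\to C/\mathfrak{q}C$. Corollary~\ref{genGamma} does not give this. It gives $\fm C\cap\Gamma_{P_i}(C)=\fm\Gamma_{P_i}(C)$ only for each $i$ separately, and minimal generators taken from different summands can become linearly dependent in $C/\fm C$. Without that injectivity, the count $\mu(G)=\mu(C)$ proves nothing (compare $xC\subseteq C$). The paper's proof has the same hole. The equality $\mu(C)=\sum_i\mu(\Gamma_{P_i}(C))+\mu(N)$ that it takes from Theorem~\ref{main} rests on the same unjustified passage from one summand to the direct sum.

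In fact this step fails, and the ``if'' direction is false. Let $k$ be an infinite field, $R=k[[x,y]]/(y(y-x^2))$, and $C=\fm=(x,y)R$.
\begin{itemize}
\item In $R$ we have $y^2=x\cdot xy$, so $\fm^2=x\fm$. Hence $C$ is Ulrich with $\mu(C)=e_{\fm}^0(C)=2$.
\item $\Ass(C)=\{P_1,P_2\}$ with $P_1=yR$ and $P_2=(y-x^2)R$.
\item $\Gamma_{P_1}(C)=(y-x^2)R\cong R/P_1$ and $\Gamma_{P_2}(C)=yR\cong R/P_2$. Both are isomorphic to $k[[x]]$, on which $\fm$ acts through the uniformizer, so each is Ulrich of multiplicity one.
\item Nevertheless $G=(x^2,y)R\subsetneq\fm$, with $\fm/G\cong k$.
\end{itemize}
The breakdown is exactly at the step above. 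The minimal generators $y$ and $y-x^2$ of the two summands differ by $x^2\in\fm C$. Thus $x^2\in\fm C\cap G$, while $x^2\notin\fm G=(x^3,xy)R$.

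Since $R$ is Cohen--Macaulay with $\dim C=1$, the same example also contradicts the second assertion of Theorem~\ref{main}, on which you modeled your first route. Only the ``only if'' half of the statement can be proved.
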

\begin{proof}
First let $C=\oplus_{i=1}^s(\Gamma_{P_i}(C))$. Then for all $i$, $0\leq i\leq d-1$, we have 
\begin{center}
	$ 0=\Ext^i_R(R/\fm,C) = \Ext^i_R(R/\fm,\oplus_{i=1}^s(\Gamma_{P_i}(C))) \cong \oplus_{i=1}^s\Ext^i_R(R/\fm,\Gamma_{P_i}(C))$
\end{center}
which means $\Gamma_{P_i}(C)$ is Cohen-Macaulay and so Ulrich by Lemma\ref{Uldep}, for all $i$, $1\leq i\leq s$.

Now, for "if" part, let $\Gamma_{P_i}(C)$ is Ulrich for all $i$. Therefore $\e^0_{\fm}(\Gamma_{P}(C))=\mu(\Gamma_{P}(C))$ for all $P\in\Ass(C)$. By Theorem\ref{main}, there exists submodule $N$ of $C$ such that $C=\oplus_{i=1}^s(\Gamma_{P_i}(C))+N$ and so $\mu(C)=\Sigma_{i=1}^s(\mu(\Gamma_{P_i}(C)))+\mu(N)$. Now, by multiplicity formula we have
\begin{center}
	$\mu(C)=\e_{\fm}^0(C) = \Sigma_{i=1}^s\ell(C_P)\e_{\fm/P}^0(R/P) = \Sigma_{i=1}^s\e_{\fm}^0(\Gamma_{P_i}(C))=\Sigma_{i=1}^s\mu(\Gamma_{P_i}(C)) $
\end{center}
which means $N=0$ and the result follows.
\end{proof}

\section{Applications}

\begin{lem}\label{C}Assume that $(R,\fm)$ is a local ring and that $C$ is a finitely generated $R$-module such that $C$ has no embed associated prime. Let $P \in \Ass(C)$. The following conditions are equivalent.
	\begin{itemize}
		\item[(a)]$(PC)R_P = 0$, it means $C_P$ is Ulrich.  
		\item [(b)] $P\Gamma_P(C) = 0$, it means $\Gamma_P(C) = (0:_C P)$. 
	\end{itemize}
\end{lem}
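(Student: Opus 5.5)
The plan is to compare both conditions through localization at $P$, using the identification $C_P=\Gamma_{PR_P}(C_P)$ from Notation~\ref{n1}. Since $P$ is a minimal prime of $\Supp(C)$ (no embedded primes), $C_P$ has finite length over $R_P$. Localizing the inclusion $\Gamma_P(C)\subseteq C$ at $P$ gives $\Gamma_P(C)_P=C_P$, because the cokernel $C/\Gamma_P(C)$ has associated primes $\Ass(C)\setminus\{P\}$, none of which is contained in $P$ (they are all minimal and distinct from $P$). Hence $(C/\Gamma_P(C))_P=0$.

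For (b)$\Rightarrow$(a), assume $P\Gamma_P(C)=0$. Localizing gives $PR_P\,\Gamma_P(C)_P=0$, i.e.\ $PR_P\,C_P=0$, which is $(PC)R_P=0$. By the remark after the definition of Ulrich modules (a zero-dimensional module is Ulrich iff it is killed by the maximal ideal), this says exactly that $C_P$ is Ulrich over $R_P$. The identification $\Gamma_P(C)=(0:_CP)$ is then immediate: $(0:_CP)\subseteq\Gamma_P(C)$ always holds, and $P\Gamma_P(C)=0$ gives the reverse inclusion.

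For (a)$\Rightarrow$(b), assume $PR_P\,C_P=0$. Set $M=P\Gamma_P(C)\subseteq\Gamma_P(C)$. Then $M_P=PR_P\,\Gamma_P(C)_P=PR_P\,C_P=0$. On the other hand $\Ass(M)\subseteq\Ass(\Gamma_P(C))=\{P\}$. If $M\neq 0$, then $P\in\Ass(M)$, so $M_P\neq 0$, a contradiction. Therefore $M=0$, which is (b).

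The only delicate step is justifying $\Gamma_P(C)_P=C_P$, which is where the hypothesis of no embedded primes is used. It ensures that every $Q\in\Ass(C)\setminus\{P\}$ satisfies $Q\not\subseteq P$, so localization at $P$ kills $C/\Gamma_P(C)$. The argument for (a)$\Rightarrow$(b) then rests on the standard fact that a module whose only associated prime is $P$ is zero as soon as its localization at $P$ vanishes. I expect no further obstacles.
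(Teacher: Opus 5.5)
Your proof is correct and is essentially the paper's argument. In both, (a)$\Rightarrow$(b) comes down to the fact that a nonzero $P\Gamma_P(C)\subseteq PC$ would have $P$ as an associated prime, contradicting $(PC)_P=0$: the paper exhibits an explicit element $rc$ with $\Ann(rc)=P$, while you get it from $\Ass(P\Gamma_P(C))\subseteq\Ass(\Gamma_P(C))=\{P\}$.

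One correction to your closing remark: the no-embedded-primes hypothesis is also needed to get $\Ass(\Gamma_P(C))=\{P\}$, since an associated prime $Q\supsetneq P$ of $C$ would lie in $\Ass(\Gamma_P(C))$. Conversely, that direction does not actually need $\Gamma_P(C)_P=C_P$, since $M_P\subseteq(PC)_P$ already gives $M_P=0$.
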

\begin{proof}
	(b)$\Rightarrow$(a) is clear as $C_P = \Gamma_{PR_P}(C_P)$.
	
	(a)$\Rightarrow$(b). Assume contrarily that there exists $c \in C$ such that $P^ic = 0$ but $Pc \neq 0$. Therefore, there exist $t$, $1<t<i$, such that $P^{t+1}c = 0$ but $P^tc \neq 0$, which means there exist $r \in P^t$ such that $rc \neq 0$ and we have $Prc = 0$. Hence $P \subset \Ann(rc)$ and, since $C$  has no embedded associated primes, we have $P = \Ann(rc)$. Hence
	$P \in \Ass(P^iC) \subset \Ass(PC)$ which means $(PC)_P \neq 0$, a contradiction.

\end{proof}

\begin{prop}
	Assume that $(R,\fm)$ is a local ring and that $C$ is a finitely generated $R$-module such that $C$ has no embed associated prime. Let $\Ass(C)=\{ P_1,\cdots P_s\}$. Then followings are equivalent.
	\begin{itemize}
		\item [(i)] $C_P$ is Ulrich for all $P\in\Ass(C)$.
		\item [(ii)] $P_1P_2 \cdots P_s \subset \Ann(C)$.
	\end{itemize}
	In particular  localization of $R$-modules with radical annihilator ideal in their associated primes is Ulrich.
\end{prop}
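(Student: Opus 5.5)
The plan is to reduce the equivalence to Lemma \ref{C}, applied prime by prime, together with the observation that $C$ embeds into $\oplus_{i=1}^s C/\Gamma_{P_i}$-type quotients. Concretely, by Lemma \ref{C}, condition (i) says exactly that $P_i\Gamma_{P_i}(C)=0$ for every $i$, i.e. $\Gamma_{P_i}(C)=(0:_C P_i)$. Here we interpret "$C_P$ is Ulrich" as $(PC)_P=0$: $C_P$ has finite length and is Ulrich exactly when it is killed by $PR_P$, which is the zero-dimensional case noted after the definition. So the proposition becomes: $P_i(0:_C P_i)$-type annihilation for all $i$ $\iff$ $P_1\cdots P_s C=0$.

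For (ii)$\Rightarrow$(i): assume $P_1\cdots P_s C=0$. Localize at $P=P_j$. Since there are no embedded primes, the $P_i$ are pairwise incomparable, so for $i\neq j$ we have $P_i\not\subseteq P_j$ and hence $(P_i)_{P_j}=R_{P_j}$. Therefore $(P_jC)_{P_j}=(P_1\cdots P_sC)_{P_j}=0$, which is (a) of Lemma \ref{C}, so $C_{P_j}$ is Ulrich.

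For (i)$\Rightarrow$(ii): by Lemma \ref{C}, $P_i\Gamma_{P_i}(C)=0$ for all $i$. Set $D=P_1\cdots P_sC$. I would show that $\Ass(D)=\emptyset$. Let $Q\in\Ass(D)\subseteq\Ass(C)$, say $Q=P_j$. Since $D\subseteq C$, we have $\Gamma_{P_j}(D)\subseteq\Gamma_{P_j}(C)$, and $\Gamma_{P_j}(D)\neq0$ because $P_j\in\Ass(D)$. Equivalently, it suffices to check that $D_{P_j}=0$ for each $j$, because a prime $P_j\in\Ass(D)$ forces $D_{P_j}\neq0$. Localizing, $D_{P_j}=(P_jC)_{P_j}$, since the other factors become the unit ideal, and this vanishes by (a). Hence $D$ has no associated primes, so $D=0$, which gives (ii). This localization argument is cleaner than working with $\Gamma$, so I would use it, and then the (b) form of Lemma \ref{C} is only needed for interpretation.

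For the "in particular": if $\Ann(C)$ is radical and $C$ has no embedded primes, then $\Ann(C)=\bigcap P_i\supseteq P_1\cdots P_s$, so (ii) holds and (i) follows. The main obstacle I expect is justifying that minimal primes $P_i\neq P_j$ satisfy $P_i\not\subseteq P_j$. This follows because with no embedded primes all associated primes are minimal in $\Supp(C)$, hence pairwise incomparable. The second point to check carefully is the identification of "Ulrich" for the finite-length module $C_P$ over $R_P$ with $PR_PC_P=0$.
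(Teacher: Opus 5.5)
Your argument is correct and is essentially the paper's proof. Localize $P_1\cdots P_sC$ at each $P_j$, and use the incomparability of the associated primes to get $(P_i)_{P_j}=R_{P_j}$ for $i\neq j$, so that $(P_1\cdots P_sC)_{P_j}=(P_jC)_{P_j}$; for (i)$\Rightarrow$(ii), conclude from $\Ass(P_1\cdots P_sC)\subseteq\Ass(C)$ that $P_1\cdots P_sC=0$. You label the two directions correctly, whereas the paper's write-up swaps them.

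The opening remark about embedding $C$ into the quotients $C/\Gamma_{P_i}(C)$ and the detour through $\Gamma_{P_j}(D)$ are never used and can be deleted; the embedding claim is in fact false when $s=1$, since then $\Gamma_{P_1}(C)=C$. Your treatment of the ``in particular'' via $\Ann(C)=\bigcap_i P_i\supseteq P_1\cdots P_s$ is fine.
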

\begin{proof}
	$(i)\Rightarrow(ii)$ is easy to gain by localizing $P_1P_2 \cdots P_s\Ann(D)=0$ at associated primes of $D$.
	
	$(ii)\Rightarrow(i)$, Assume that $(PD)R_P = 0$ for each $P \in \Ass(D)$. Since $\Ass(D)$ has no embedded prime, we have ${P_i}R_{P_j} = R_{P_j}$ when $i \neq j$. Hence
	for each $j$, $1\leq j \leq s$, $(P_1P_2\cdots P_sD)_{P_j} = {P_1}_{P_j}{P_2}_{P_j}\cdots {P_s}_{P_j}D_{P_j} = {P_j}_{P_j}D_{P_j} = 0$ by assumption and $P_j$ does not belong to $\Supp(P_1P_2\cdots P_sD)$. But $\Ass(P_1P_2\cdots P_sD) \subset \Ass(D)$, so $\Ass(P_1P_2\cdots P_sD)$ is empty and we have $(P_1P_2\cdots P_sD)
	= 0$.
\end{proof}

\begin{cor}\label{C1}
	Assume that $(R,\fm)$ is a local ring and $C$ is an Ulrich $R$-module. The followings are equivalent:
	\begin{itemize}
		\item[(i)] $\Ann(C) $ is a radical ideal.
		
		\item[(ii)] for each $P \in \Ass(C)$ we have $(PC)R_P = 0$, i.e. $C_P$ is Ulrich.
		
	\end{itemize} 
	In particular, faithful Ulrich modules are generically Ulrich if and only if the base ring is reduced.
\end{cor}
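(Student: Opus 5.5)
The plan is to reduce everything to the previous Proposition (and, for the converse, to a direct support argument), using the Cohen--Macaulayness of $C$ to control its associated primes. Since $C$ is Ulrich, it is Cohen--Macaulay, so $C$ has no embedded associated primes. Hence $\Ass(C)=\{P_1,\dots,P_s\}$ is exactly the set of minimal primes of $\Ann(C)$, and
\[
\sqrt{\Ann(C)}=P_1\cap\cdots\cap P_s .
\]
In particular the hypotheses of the preceding Proposition and of Lemma~\ref{C} are satisfied.

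For $(i)\Rightarrow(ii)$: if $\Ann(C)$ is radical, then $\Ann(C)=\cap_i P_i\supseteq P_1P_2\cdots P_s$. Condition (ii) of the previous Proposition therefore holds, and that Proposition gives $(P_iC)R_{P_i}=0$ for every $i$. This is exactly (ii).

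For $(ii)\Rightarrow(i)$: the Proposition only yields $P_1\cdots P_s\subseteq \Ann(C)$, which is not enough, so I will argue directly that $\cap_iP_i\subseteq\Ann(C)$.
\begin{itemize}
\item Take $r\in\cap_iP_i$ and $c\in C$.
\item For each $j$ we have $(rc)/1 \in P_jR_{P_j}C_{P_j}=(P_jC)R_{P_j}=0$, so the cyclic module $R\,rc$ vanishes at every $P_j$.
\item Since $\Ass(R\,rc)\subseteq\Ass(C)=\{P_1,\dots,P_s\}$ and no $P_j$ lies in $\Supp(R\,rc)$, the module $R\,rc$ has no associated primes, hence $rc=0$.
\end{itemize}
Thus $\cap_iP_i\subseteq\Ann(C)\subseteq\sqrt{\Ann(C)}=\cap_iP_i$, so $\Ann(C)$ is radical. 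The only delicate point is noticing that the product bound from the Proposition must be upgraded to the intersection; the localization argument above handles this cleanly.

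For the ``in particular'': if $C$ is faithful then $\Ann(C)=0$, which is radical exactly when $R$ is reduced. Moreover $\Ass(C)$ is then the set of minimal primes of $R$. So ``generically Ulrich'' (i.e.\ $C_P$ Ulrich, meaning $(PC)R_P=0$, at each $P\in\Ass(C)$) is precisely condition (ii), and the equivalence with reducedness follows.
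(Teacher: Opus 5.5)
Your proof is correct. Your (i)$\Rightarrow$(ii) is essentially the paper's argument. Radicality gives $P_1\cdots P_s\subseteq\bigcap_iP_i=\Ann(C)$, and localizing at each $P_j$ gives $(P_jC)R_{P_j}=0$, because the $P_i$ are pairwise incomparable when $C$ is Cohen--Macaulay.

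For (ii)$\Rightarrow$(i) you take a genuinely different route. The paper uses Theorem~\ref{main} to write $C=(\oplus_i\Gamma_{P_i}(C))+N$. It then uses Lemma~\ref{C} to get $P_i\Gamma_{P_i}(C)=0$, hence $\Ann(\Gamma_{P_i}(C))=P_i$ and $\Ann(C)=\bigcap_iP_i\cap(0:N)$. This is immediate when $N=0$, but Theorem~\ref{main} guarantees $N=0$ only under $\dep(R)\ge\dim(C)$. Otherwise one still needs $\bigcap_iP_i\subseteq(0:N)$. The paper closes that step with the reverse inclusion $(0:N)\subseteq\bigcap_iP_i$, which does not yield the conclusion.

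Your elementwise argument localizes $rc$ at each $P_j$ and uses $\Ass(R\,rc)\subseteq\Ass(C)$. It proves $\bigcap_iP_i\subseteq\Ann(C)$ directly, and needs neither the decomposition theorem nor any depth assumption on $R$. In fact it works for any finitely generated module. The Cohen--Macaulay hypothesis enters only in (i)$\Rightarrow$(ii), through the absence of embedded primes.

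What the paper's route offers in return is structure. When $N=0$ it exhibits $C$ as a direct sum of the $R/P_i$-modules $\Gamma_{P_i}(C)$, and radicality of $\Ann(C)$ can be read off at once.
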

\begin{proof} Let $\Ass(C)=\{P_1,\cdots, P_s\}$.
	
	$(i)\Rightarrow(ii)$. The result follows by Lemma\ref{C} since  $P_1P_2 \cdots P_sC=0$. Now, localizing at associated
	
	$(ii)\Rightarrow(i)$. By Theorem\ref{main} we have $C=\Gamma_{P_i}^s(C)+N$. Therefore, by Lemma\ref{C} we have \begin{center}
		$(0 : C) = (0 : \Gamma_{P_i}^s(C)+N)=\cap_{i}^s(0 : \Gamma_{P_i}(C))\cap(0 : N)=\cap_{i}^sP_i\cap N$	
	\end{center}
	which follows the result as $(0 : N)\subseteq\cap_{i}^sP_i=\sqrt{\Ann(C)}$.
\end{proof}

\begin{nota}
	Assume that $R$ is a one-dimensional Cohen Macaulay local ring, with infinite residue field, which have minimal multiplicity and that $\Ass(R)$ is not singleton. Set $|\Ass(R)|=s$. 
	\begin{itemize}
		\item[(i)] If $\redu(R)=1$ then $s=2$ and $R$ is fiber product of two regular ring. If $\redu(R)=2$ then $R$ is fiber product of rings which at least one of them is regular.
		\item[(ii)] If $\redu(R)\geq s$ then $R$ is fiber product of local rings which at least associated prime of one them is singleton.
		\item[(iii)] $R$ is fiber product of regular rings if and only if $\redu(R)+1=s$.
	
	\end{itemize}
To proof and other conventions, one may consult with \cite[Fact 2.6]{NT}
\end{nota}
Next Remark, shows some elementary properties of ideals which are Ulrich as an $R$-module and make some statements which will be needed next. 
\begin{rem}
	\emph{Assume that $(R,\fm)$ is a one-dimensional Cohen-Macaulay local ring. Therefore,  every non-zero ideal of $R$ is a one-dimensional Cohen-Macaulay module. Let $\Ass(R) = \{ P_1, P_2, \cdots P_s\}$ for $s>1$, and $I$ be an ideal of $R$. For $R$-module $M$, set $\Gamma_M := \oplus_{P\in\Ass(M)}\Gamma_{P}(M)$. Clearly $\Gamma_R := \oplus_{i=1}^s\Gamma_{P_i}(R)$ is an $\fm$-primary ideal of $R$ and contains $\Gamma_I$. Also $\Gamma_R = \Gamma_{\fm}$ unless $|\Ass(R)|$ is singleton. Here comes some basic facts about ideals of $R$ which are Ulrich as an $R$-module.}
	\begin{itemize}
		\item[(1)] If $I$ is Ulrich  Then $I\subseteq\Gamma_R$.  
		\item[(2)]  $I$ is $\fm$-primary if and only if $\Gamma_I$ is $\fm$-primary.
	
		\item[(3)] If $P\in\Ass(R)$ then $\Gamma_P=\Gamma_P(P)\oplus_{q\neq P}\Gamma_{q}(R)$.

	\end{itemize}
\end{rem}
\begin{proof}
	$(1)$. By Theorem\ref{main}, we have $I=\oplus_{i=1}^s\Gamma_{P_i}(I) \subseteq \oplus_{i=1}^s\Gamma_{P_i}(R)$. Note that $\Gamma_P(I)=I\cap\Gamma_P(R)$.
	
	$(2)$. If part is clear as $\Gamma_I \subseteq I$. For only if part, let $I$ be $\fm$-primary and $\Gamma_I \subseteq P$ for some $P\in\Ass(R)$. Then, localizing $\Gamma_P(I) \subseteq P$ at $P$ we get $R_P = I_P = \Gamma_{PR_P}(I_P) \subseteq PR_P$, which is a contradiction.
	
	$(3)$. Note that if $q\in\Ass(R)$ and $\alpha\in\Gamma_{q}(R)$ then there exists natural number $t$ such that $q^t\alpha=0$. Now, if $q \neq P$ then $\alpha\in P$ since $q^t\nsubseteq P$. Therefore $\Gamma_{q}(P)=\Gamma_{q}(R)$ if $q\neq P$.
\end{proof}
 

\begin{prop}\label{PUlrich}	Assume that $(R,\fm)$ is a one-dimensional Cohen-Macaulay local ring, with infinite residue field, and that $\Ass(R)=\{P_1,P_2,\cdots P_s\}$ with $s>1$.
	\begin{itemize}
		\item[(i)] If $P\in\Ass(R)$ then $R/\Gamma_{P}(R)$ and $R/\Gamma_{P}(P)$ are Cohen-Macaulay rings.
		\item[(ii)] If $P\su \Gamma_R$ then $P=\Gamma_P$.
		\item[(iii)] If $I$ is Ulrich and $P\su I$ then $P$ is Ulrich.
	\end{itemize}
\end{prop}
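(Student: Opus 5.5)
For \emph{(i)} I will only use associated primes and depth. We have $\Ass(R/\Gamma_P(R))=\Ass(R)-\{P\}$ by Notation~\ref{n1}. This set is nonempty because $s>1$, and it does not contain $\fm$. Hence $\dep(R/\Gamma_P(R))>0$, and since each remaining $q\in\Ass(R)$ has $\dim R/q=1$, the ring $R/\Gamma_P(R)$ is one-dimensional Cohen-Macaulay.

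For $R/\Gamma_P(P)$, note that $\Gamma_P(P)=P\cap\Gamma_P(R)$. I will use the exact sequence
\begin{center}
$0\lo \Gamma_P(R)/\Gamma_P(P)\lo R/\Gamma_P(P)\lo R/\Gamma_P(R)\lo 0$.
\end{center}
The left term is $\Gamma_P(R)/(P\cap \Gamma_P(R))\cong (\Gamma_P(R)+P)/P\su R/P$. It is therefore either zero or has $\Ass=\{P\}$. In either case $\Ass(R/\Gamma_P(P))\su\Ass(R)$ avoids $\fm$, so $\dep(R/\Gamma_P(P))>0$. Since $\dim(R/\Gamma_P(P))=1$, this ring is Cohen-Macaulay.

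For \emph{(ii)}, the inclusion $\Gamma_P\su P$ is trivial, so I need the reverse. Take $a\in P\su\Gamma_R$ and write $a=a_P+\Sigma_{q\neq P}a_q$ with $a_q\in\Gamma_q(R)$. The proof of part $(3)$ of the preceding Remark shows $\Gamma_q(R)\su P$ for $q\neq P$. Hence $a_P=a-\Sigma a_q\in P\cap\Gamma_P(R)=\Gamma_P(P)$. Thus $a\in \Gamma_P(P)\oplus_{q\neq P}\Gamma_q(R)$, which equals $\Gamma_P$ by part $(3)$ of that Remark. Therefore $P=\Gamma_P$.

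For \emph{(iii)}, the plan is to decompose $P$ and show that every summand is Ulrich.
\begin{itemize}
\item Part $(1)$ of the preceding Remark gives $P\su I\su\Gamma_R$. So $P=\Gamma_P(P)\oplus_{q\neq P}\Gamma_q(R)$ by (ii).
\item Since $R$ is Cohen-Macaulay, Theorem~\ref{main} and Proposition~\ref{UlAll} give $I=\oplus_q\Gamma_q(I)$ with each $\Gamma_q(I)$ Ulrich.
\item For $q\neq P$ we have $\Gamma_q(R)\su P\su I$, so $\Gamma_q(I)=I\cap\Gamma_q(R)=\Gamma_q(R)$. These summands of $P$ are therefore Ulrich.
\item For the $P$-summand, $\Gamma_P(P)=P\cap\Gamma_P(I)$. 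The quotient $\Gamma_P(I)/\Gamma_P(P)$ embeds in $R/P$, so it is zero or a one-dimensional Cohen-Macaulay module. If it is zero, $\Gamma_P(P)=\Gamma_P(I)$ is Ulrich. Otherwise Proposition~\ref{PropMuType}(i) shows that the submodule $\Gamma_P(P)$ is Ulrich. If $\Gamma_P(P)=0$ it simply drops out as a summand.
\end{itemize}
Finally, a finite direct sum of one-dimensional Ulrich modules is Ulrich, because $\mu$ and $\e_{\fm}^0$ are both additive and depth is preserved. Hence $P$ is Ulrich.

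The step I expect to need the most care is this dimension bookkeeping. One must make sure the relevant quotients are genuinely one-dimensional Cohen-Macaulay or zero, so that Proposition~\ref{PropMuType}(i) applies, and handle the degenerate zero cases separately.
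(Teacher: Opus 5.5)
Your proofs of (i) and (ii) are correct. In (i) you phrase the paper's depth argument through associated primes of the short exact sequence, which is the same argument. In (ii) you take a genuinely simpler route: since $\bigoplus_{q\neq P}\Gamma_q(R)\subseteq P$, the modular law gives $P\cap\Gamma_R=\Gamma_P(P)\oplus\bigoplus_{q\neq P}\Gamma_q(R)=\Gamma_P$. This replaces the paper's depth argument on $0\lo P/\Gamma_P\lo \Gamma_R/\Gamma_P\lo \Gamma_R/P\lo 0$.

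The gap is in (iii). Your first two bullets rest on Theorem~\ref{main}, through part (1) of the preceding Remark and Proposition~\ref{UlAll}. The paper's own proof has the same dependence, and Theorem~\ref{main} does not hold as stated. Take the tacnode $R=k[[x,y]]/(y(y-x^2))$ with $k$ infinite. There $y^2=x^2y$, so $\fm^2=x\fm$ and $\fm$ is Ulrich. But $\Gamma_{(y)}(\fm)=(0:_R y^\infty)=(y-x^2)R$ and $\Gamma_{(y-x^2)}(\fm)=yR$, and their sum $(x^2,y)R$ does not contain $x$. So Remark (1) fails, since $\fm\not\subseteq\Gamma_R$.

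The inclusion you actually use, $P\subseteq\Gamma_R$, can also fail under the hypotheses of (iii). Let $R=k[[x,y,z]]/(xz,yz,y(y-x^2))$. This ring is reduced with $\Ass(R)=\{(x,y),(y,z),(y-x^2,z)\}$. The ideal $I=\fm$ is Ulrich because $\fm^2=(x+z)\fm$, and $P=(x,y)\subseteq I$. Since $\Gamma_q(R)\subseteq P'$ whenever $q\neq P'$, the image of $\Gamma_R$ in $R/zR=k[[x,y]]/(y(y-x^2))$ lies in $(y-x^2)+(y)=(x^2,y)$. Hence $x\in P$ but $x\notin\Gamma_R$. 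So the decomposition $P=\Gamma_P(P)\oplus\bigoplus_{q\neq P}\Gamma_q(R)$ that your last step assembles does not exist here, and the paper's intermediate claim $P=\Gamma_P$ fails as well.

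The decomposition is not needed. Apply Proposition~\ref{PropMuType}(i) directly to $N=P\subseteq M=I$. Here $I$ is a nonzero ideal, hence a one-dimensional Cohen--Macaulay module, and it is Ulrich by hypothesis. If $I\neq P$, then $I/P$ is a nonzero ideal of the one-dimensional local domain $R/P$, hence one-dimensional Cohen--Macaulay. So $P$ is Ulrich. This is exactly the argument of your last bullet, applied to $P\subseteq I$ rather than to $\Gamma_P(P)\subseteq\Gamma_P(I)$, and it makes (iii) independent of Theorem~\ref{main}.
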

\begin{proof}
	$(i)$. Let $a\in\fm$ such that $a\fm \su \Gamma_{P}(R)$. As $\Gamma_{P}(R)$ is finite, there exists natural number $n$ such that $P^n\Gamma_{P}(R)=0$. Therefore $P^na\fm = 0$, and so $P^na=0$ and $a\in\Gamma_{P}(R)$, which means $\fm\notin\Ass(R/\Gamma_{P}(R))$. Now if $a\fm \su \Gamma_{P}(P) \su P$ then by above discussion $a\in \Gamma_{P}(R)\cap P = \Gamma_{P}(P)$, and the result follows. 
	
	$(ii)$. Consider the exact sequence $0\lo P/\Gamma_P\lo \Gamma_R/\Gamma_P \lo \Gamma_R/P\lo0$. Note that $\Ass(P/\Gamma_P)\subseteq \{\fm\}$ and $\Gamma_R/\Gamma_P\cong \Gamma_P(R)/\Gamma_P(P)$ which is one-dimensional Cohen-Macaulay $R$-module by proof of $(i)$. Therefore $P=\Gamma_P$.
	
	$(iii)$. By Theorem\ref{main} one has $I=\oplus_{i=1}^s\Gamma_{P}(I)$.
	Therefore  inclusions
	$\Gamma_P=\Gamma_{P}(P)\oplus_{q\neq P}\Gamma_{q}(R) \subseteq P\subseteq \oplus_{i=1}^s\Gamma_{P}(I)$ yield that  $\Gamma_{q}(R)=\Gamma_{q}(I)$, and is Ulrich obviously, for all $q\in\Ass(R)$ with $q\neq P$; and we have exact sequence $0\lo \Gamma_P(P) \lo \Gamma_P(I)\lo \Gamma_P(I)/\Gamma_P(P)\lo0$. If $\alpha\fm \subseteq \Gamma_P(P)$ then $\alpha\in \Gamma_P(P)$ as $P$ is a prime ideal. Therefore $\depth(\Gamma_P(I)/\Gamma_P(P)) > 0$ and $\Gamma_P(P)$ is Ulrich by Proposition\ref{PropMuType}. Now consider the exact sequence $0\lo P/\Gamma_P\lo I/\Gamma_P \lo I/P\lo0$. Note that $\Ass(P/\Gamma_P)\subseteq \{\fm\}$ and $I/\Gamma_P\cong \Gamma_P(I)/\Gamma_P(P)$ which is one-dimensional Cohen-Macaulay $R$-module by same discussion as in proof of $(i)$. Therefore $P=\Gamma_P$ and is Ulrich
\end{proof}

\begin{rem}
	\emph{	Assume that $(R,\fm)$ is a one-dimensional Cohen-Macaulay local ring, with infinite residue field, such that $\Ass(R)=\{P_1,P_2,\cdots P_s\}$ with $s>1$. Following proof of Proposition\ref{PUlrich}, one can see if $P\su \Gamma_R$ then $P=\Gamma_P$, no need for $\Gamma_R$ to be Ulrich. It is not known that, if $P$ is Ulrich in this case or not.}
	
\end{rem}

\begin{thm}\label{mainGamma}
	Assume that $(R,\fm)$ is a one-dimensional Cohen-Macaulay local ring, with infinite residue field, such that $\Ass(R)$ is not singleton. The following conditions are equivalent:
	\begin{itemize}
		\item[(i)] $R$ has minimal multiplicity.
		\item[(ii)] $\fm$ is Ulrich as an $R$-module.
		\item[(iii)] There exists an $\fm$-primary ideal $I$, which is Ulrich as an $R$-module, such that $P , T\subseteq I$ for some $P , T\in\Ass(R)$. 
		\item[(iv)] There exists prime ideals $P,T\in\Ass(R)$ which are Ulrich as $R$-modules.
		\item[(v)] $\Gamma_P(R)$ is Ulrich as an $R$-module for all $P\in\Ass(R)$. 
	\end{itemize}
	When this is the case, $R$ has decomposable maximal ideal.

\end{thm}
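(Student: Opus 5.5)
I would prove the cycle $(i)\Leftrightarrow(ii)$, $(ii)\Rightarrow(iii)\Rightarrow(iv)\Rightarrow(ii)$, and close with $(v)$ via $(ii)\Leftrightarrow(v)$. The final assertion about decomposability is the Corollary following Theorem~\ref{main}.

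\emph{$(i)\Leftrightarrow(ii)$.} For a one-dimensional Cohen--Macaulay ring with infinite residue field, minimal multiplicity means $\fm^2=x\fm$ for a minimal reduction $x$. This is exactly $\fm\fq=\fm\fm$ with $\fq=xR$, i.e.\ $\fm$ is Ulrich as a module (Remark~\ref{1Rem}(1), (3), with $\mu(\fm)=\e^0_\fm(R)$).

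\emph{$(ii)\Rightarrow(iii)$.} Take $I=\fm$, which contains every $P\in\Ass(R)$; since $\Ass(R)$ has at least two elements, we can choose $P\neq T$.

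\emph{$(iii)\Rightarrow(iv)$.} Apply Proposition~\ref{PUlrich}(iii) to $P\subseteq I$ and to $T\subseteq I$.

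\emph{$(iv)\Rightarrow(i)$.} This is the step I expect to be the main obstacle. The plan is to use Theorem~\ref{main} on the Ulrich ideal $P$, giving $P=\oplus_{q\in\Ass(P)}\Gamma_q(P)$. Since $P$ is prime and $R$ has no embedded primes, the associated primes of $P$ (as an $R$-module) are the $q\neq P$ together with $P$ itself, provided $\Gamma_P(P)\neq 0$. By Remark (3) before Proposition~\ref{PUlrich}, we get $P=\Gamma_P$, which is $\fm$-primary modulo the component $\Gamma_P(R)/\Gamma_P(P)$.

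Do the same for $T$. Then form the sum of the two Ulrich ideals: $P+T$ contains $\Gamma_P(P)\oplus\Gamma_T(T)\oplus\bigoplus_{q\neq P,T}\Gamma_q(R)$, and also $\Gamma_T(R)\subseteq P$ and $\Gamma_P(R)\subseteq T$. Hence $P+T\supseteq\Gamma_R$, which is $\fm$-primary.

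I would then try to show that $\fm=P+T$ is Ulrich. Write $\fm P=xP$ and $\fm T=xT$ with a common superficial element $x$; a general $x$ works for finitely many modules since the residue field is infinite. Then $\fm(P+T)=x(P+T)$, so $P+T$ is an $\fm$-primary Ulrich ideal.

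To conclude that $R$ has minimal multiplicity, combine Proposition~\ref{Ptype}(i) for $P$ and for $T$ with the multiplicity additivity $\e^0_\fm(R)=\e^0_\fm(P)+\e^0_\fm(R/P)$. This should yield $\e^0_\fm(R)\le \redu(R)+1$, and the reverse inequality always holds. If $\fm\neq P+T$ this route needs more care. Alternatively, I would argue with Remark~\ref{1Rem}(6): $P+T$ is $\fm$-primary, so it contains a superficial element, hence $P+T=\fm$ and $\fm$ is Ulrich, giving $(ii)$ directly. I would try this second route first, since it is cleaner.

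\emph{$(ii)\Rightarrow(v)$.} By Theorem~\ref{main}, $\fm=\oplus_P\Gamma_P(\fm)$, and Proposition~\ref{UlAll} then says each $\Gamma_P(\fm)$ is Ulrich. By Remark (3), $\Gamma_q(\fm)=\Gamma_q(R)$ for every $q$ when $|\Ass(R)|>1$, because each $\Gamma_q(R)$ lies in the other associated primes and hence in $\fm$. So each $\Gamma_P(R)$ is Ulrich.

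\emph{$(v)\Rightarrow(ii)$.} The direct sum $\Gamma_R=\oplus_P\Gamma_P(R)$ of Ulrich modules is Ulrich, using a common superficial element as above, and it is $\fm$-primary. It therefore contains a superficial element, so $\Gamma_R=\fm$ by Remark~\ref{1Rem}(6), and $\fm$ is Ulrich.
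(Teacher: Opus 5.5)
Your steps $(i)\Leftrightarrow(ii)\Rightarrow(iii)\Rightarrow(iv)$ are fine, and so is the observation that $P+T$ and $\Gamma_R$ are $\fm$-primary Ulrich ideals. The gap is the step you use for both $(iv)\Rightarrow(ii)$ and $(v)\Rightarrow(ii)$: ``$P+T$ (resp.\ $\Gamma_R$) is $\fm$-primary, so it contains a superficial element.'' Being $\fm$-primary does not give an element $z$ with $zR$ a reduction of $\fm$ (think of $\fm^2$), and Remark~\ref{1Rem}(6) needs exactly such an element. Take the tacnode $R=k[[x,y]]/(y(y-x^2))$. Here $P=(y)\cong R/T\cong k[[x]]$ and $T=(y-x^2)\cong R/P$ are Ulrich, with $\Gamma_P(R)=T$ and $\Gamma_T(R)=P$. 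The ideal $P+T=\Gamma_R=(y,x^2)$ is $\fm$-primary with $\mu=2=\e_{\fm}^0(R)$, hence Ulrich, but it is not $\fm$. So by Remark~\ref{1Rem}(6) it contains no superficial element, and your identification $\fm=P+T$ fails. Your other route does not help either: Proposition~\ref{Ptype}(i) only gives $\e_{\fm}^0(R)\le\redu(R)+\e_{\fm}^0(R/P)$, and nothing forces $R/P$ to be regular.

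In fact the gap cannot be closed, because $(iii)$, $(iv)$, $(v)$ do not imply $(i)$. For $k$ infinite, let
$R=k(1,1)+k(t^3,s^3)+k(t^4,s^4)+\bigl(t^6k[[t]]\times s^6k[[s]]\bigr)\subseteq k[[t]]\times k[[s]]$.
This is the pullback of $A=k[[t^3,t^4]]$ and $B=k[[s^3,s^4]]$ over their common quotient by the conductors.
\begin{itemize}
\item $R$ is a one-dimensional reduced (hence Cohen--Macaulay) local ring, finite over $k[[z]]$ with $z=(t^3,s^3)$.
\item Its minimal primes are $P=0\times s^6k[[s]]$ and $T=t^6k[[t]]\times 0$, with $R/P\cong A$ and $R/T\cong B$.
\item As $R$-modules, $P\cong k[[s]]$ and $T\cong k[[t]]$ are Ulrich, with $\mu=\e^0_{\fm}=3$.
\item $\Gamma_P(R)=T$, $\Gamma_T(R)=P$, and $P+T$ is $\fm$-primary and Ulrich with $\mu=6$.
\item However, $\e_{\fm}^0(R)=\e^0(A)+\e^0(B)=6$, while $\fm^2$ is spanned by $(t^6,s^6),(t^7,s^7),(t^8,s^8)$ together with $t^9k[[t]]\times s^9k[[s]]$. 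Hence $\mu(\fm)=5$, and $R$ does not have minimal multiplicity.
\end{itemize}
The paper's own $(v)\Rightarrow(i)$ breaks at the corresponding point. It needs $\Gamma_R\cap xR=x\Gamma_R$, but the componentwise equalities $xR\cap\Gamma_P(R)=x\Gamma_P(R)$ do not pass to the sum: in the tacnode, $x^2\in(\Gamma_R\cap xR)\setminus x\Gamma_R$. Likewise Theorem~\ref{main}, on which your $(ii)\Rightarrow(v)$ and the decomposability claim rest, fails for $C=\fm$ in the tacnode. There $\Gamma_P(\fm)\oplus\Gamma_T(\fm)=(y,x^2)\neq\fm$, and $\fm$ is indecomposable even though $R$ has minimal multiplicity. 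The implication $(ii)\Rightarrow(v)$ itself does hold: apply Lemma~\ref{Uldep} directly to $C=\fm$ and use $\Gamma_P(\fm)=\Gamma_P(R)$.
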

\begin{proof}
	$(i)\Rightarrow(ii)$ is clear by definition of rings with minimal multiplicity.
	
	$(ii)\Rightarrow(iii)$. clear as one can take $I=\fm$.
	
	$(iii)\Rightarrow(iv)$. Follows immidiately by Proposition\ref{PUlrich}(iii). 
	
	$(iv)\Rightarrow(v)$.	As $P$ is Ulrich $\Gamma_{q}(R)$ is Ulrich for all $q\in\Ass(R)$ with $q\neq P$. Also, as $T$ is Ulrich $\Gamma_{P}(R)$ is Ulrich since $T\neq P$. Now the result follows.
	
	
	$(v)\Rightarrow(i)$. Consider $(\Gamma_R + xR)/xR \leq R/xR$. Note that if $x\alpha\in\Gamma_{P}(R)$ then $\fm x\alpha \subseteq \fm\Gamma_{P}(R)=x\Gamma_{P}(R)$ which means $\fm\alpha\subseteq\Gamma_{P}(R)$ and so $\alpha\in\Gamma_{P}(R)$ as $\fm$ is a regular ideal. This means $x\alpha$ is not a generator for $\Gamma_R$ and therefore $\mu((\Gamma_R + xR)/xR) = \mu(\Gamma_R)=\e_{\fm}^0(R))$. Now, as $\mu((\Gamma_R + xR)/xR)$ annihilated by $\fm$ we get $\e_{\fm}^0(R) - 1 \leq \mu((\Gamma_R + xR)/xR) \leq \redu(R/xR) =\redu(R) < \e_{\fm}^0(R) $ which complete the proof.
\end{proof}
This shows immediately that, every $d$-dimensional Cohen Macaulay ring $R$ with minimal multiplicity have quasi-decomposable maximal ideal provided $R$ has a superficial sequence $\q$ such that $\Ass(R/\q)$ is not singleton; see \cite{NT}. However, in case $d>1$, maximal ideals of rings with minimal multiplicity are not Cohen-Macaulay, and so outside the scope of Theorem\ref{main}.

Here comes another equivalence for rings having minimal multiplicity, in term of Ulrichness of first syzygy of canonical module. First we prove a lemma. 

\begin{lem}\label{Ulem}
	Assume that $(R,\fm)$ is a one-dimensional Cohen-Macaulay local ring, with infinite residue field, admitting canonical module $K$.
	\begin{itemize}
		\item [(a)] Assume that $T$ is an arbitrary maximal Cohen-Macaulay module. Then $T$ is Ulrich, if and only if $\Hom_R(T , K)$ is Ulrich.
		
		\item [(b)] Assume that there exist natural number $t$ and exact sequence $0 \lo R \overset{\phi}{\lo} K^t \lo D \lo 0$ such that $D$ is maximal Cohen-Macaulay Ulrich module. Then $ t = \redu(R)$, $\mu(D) = \redu(R)^2 - 1$ and $\Omega^1K$ is Ulrich.
	\end{itemize}
\end{lem}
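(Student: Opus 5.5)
The plan for (a) is to compare the three invariants $\mu$, $\e_{\fm}^0$ and $\redu$ under the duality $(-)^\vee:=\Hom_R(-,K)$ and then apply Lemma~\ref{FirstLemma}.

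For (a), fix a superficial element $x\in\fm$ that is regular on $R$, $T$ and $T^\vee$. Since $T$ is maximal Cohen-Macaulay, $\Ext^1_R(T,K)=0$. Hence $T^\vee/xT^\vee\cong\Hom_R(T/xT,K/xK)$, which is the Matlis dual of $T/xT$ over the Artinian ring $R/xR$. This gives three equalities:
\begin{itemize}
\item $\e_{\fm}^0(T^\vee)=\l(T^\vee/xT^\vee)=\l(T/xT)=\e_{\fm}^0(T)$;
\item $\mu(T^\vee)=\redu(T)$, because the socle and the top are exchanged by Matlis duality;
\item $\redu(T^\vee)=\mu(T)$, for the same reason.
\end{itemize}
By Lemma~\ref{FirstLemma}, $T$ is Ulrich if and only if $\mu(T)=\e_{\fm}^0(T)$. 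By the equalities above this is equivalent to $\redu(T^\vee)=\e_{\fm}^0(T^\vee)$. By Lemma~\ref{FirstLemma}(ii) this holds if and only if $T^\vee$ is Ulrich.

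For (b), apply $(-)^\vee$ to $0\lo R\lo K^t\lo D\lo 0$. Using $\Ext^1_R(D,K)=0$ and $\Hom_R(K,K)=R$, we get an exact sequence $0\lo D^\vee\lo R^t\overset{\phi^\vee}{\lo} K\lo 0$, and $D^\vee$ is Ulrich by (a). Write $\phi(1)=(k_1,\dots,k_t)$. Then $\phi^\vee$ sends $(f_i)\in\Hom_R(K,K)^t=R^t$ to $\sum f_i k_i$. Hence surjectivity of $\phi^\vee$ forces some $k_i\notin\fm K$, so $\phi(1)\notin\fm K^t$. It follows that $\mu(D)=t\,\mu(K)-1=t\,\redu(R)-1$. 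Once $t=\redu(R)$ is known, this gives $\mu(D)=\redu(R)^2-1$.

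To prove $t=\redu(R)=\mu(K)$, it suffices to show that $D^\vee\subseteq\fm R^t$, i.e. that $R^t\lo K$ is a minimal presentation. If $D^\vee\not\subseteq\fm R^t$, then a free summand $R$ splits off from $D^\vee$. A direct summand of an Ulrich module is Ulrich, since the Ulrich condition is $\fm M=xM$ and this passes to summands. So $R$ itself is Ulrich, i.e. $\e_{\fm}^0(R)=1$, and $R$ is regular with $K\cong R$ and $\redu(R)=1$. This degenerate case I will treat separately by hand, presumably by discarding the split free summand. Otherwise $D^\vee$ is the first syzygy of $K$ in a minimal free resolution. Then $t=\mu(K)=\redu(R)$, and $\Omega^1K\cong D^\vee$ is Ulrich.

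I expect the main obstacle to be this minimality step: ruling out that $D^\vee$ contains a free summand when $R$ is not regular, and making the identification $D^\vee\cong\Omega^1K$ precise up to free summands. The rest is formal bookkeeping with Matlis duality modulo $x$.
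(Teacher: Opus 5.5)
\textbf{The deferred regular case cannot be handled: it is a counterexample to (b).} Take $R=k[[x]]$ with $k$ infinite, so $K=R$ and $\redu(R)=1$. Take the split sequence $0\lo R\lo R^2\lo R\lo 0$ with $\phi(1)=(1,0)$. Here $D\cong R$ is maximal Cohen--Macaulay and Ulrich, since $\mu(R)=1=\e^0_{\fm}(R)$. Yet $t=2\neq\redu(R)$ and $\mu(D)=1\neq 0=\redu(R)^2-1$. Similarly $D\cong R^{t-1}$ works for every $t\geq 2$.

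Discarding the split free summand cannot rescue this, because $t$ is part of the given data. So you should not promise a treatment of this case. Statement (b) needs the extra hypothesis that $R$ is not regular, i.e.\ $e:=\e^0_{\fm}(R)\geq 2$. The paper's own proof tacitly uses the same hypothesis. After reducing $\Hom_R(D,K)\oplus R^r\cong\Omega^1K\oplus R^t$ modulo $x$ and multiplying by $\fm$, it compares lengths: $r(e-1)=\ell(\fm\Omega^1K/x\Omega^1K)+t(e-1)$. This says nothing when $e=1$.

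\textbf{With that hypothesis added, your proof is complete.} The step you single out as the main obstacle is already settled by your own splitting argument. If $D^\vee\not\subseteq\fm R^t$, then a basis element of $R^t$ lies in $D^\vee$ and splits off as a summand $R$ of the Ulrich module $D^\vee$. That forces $R$ to be Ulrich, hence regular. So $D^\vee\subseteq\fm R^t$, and $R^t\lo K$ is a minimal free cover. Hence $t=\mu(K)=\redu(R)$, and $D^\vee$ is literally the first syzygy $\Omega^1K$. No ambiguity up to free summands remains.

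\textbf{Comparison with the paper's route.} For (a), the paper argues directly: when $\fm T=xT$, the cokernel $\Ext^1_R(T/xT,K)$ of multiplication by $x$ on $\Hom_R(T,K)$ is killed by $\fm$. It gets the converse from reflexivity, $\Hom_R(\Hom_R(T,K),K)\cong T$. Your Matlis-duality computation modulo $x$ preserves $\e^0_{\fm}$ and swaps $\mu$ and $\redu$; combined with Lemma~\ref{FirstLemma}, this is slightly longer. It is, however, symmetric and yields the extra identities $\mu(T^\vee)=\redu(T)$ and $\redu(T^\vee)=\mu(T)$.

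For (b), the paper gets $t\geq r$ and a stable isomorphism from Schanuel's lemma, then runs the length count above. Its identification $\Omega^1K\cong\Hom_R(D,K)$ implicitly cancels free summands. Your argument, that a direct summand of an Ulrich module is Ulrich, proves minimality of the presentation directly. It avoids cancellation and makes the regular exception visible. The computation $\mu(D)=t\,\mu(K)-1$ from $\phi(1)\notin\fm K^t$ is the same in both.
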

\begin{proof}
	(a). Let $T$ be Ulrich. As $R/\fm$ is infinite, there exists superficial element $x \in \fm$ for $T$ such that $\fm T = xT$. Applying $\hom_R( - , K)$ over exact sequence $0 \lo T \overset{\times x}{\lo} T \lo T/xT \lo 0$ gives the exact sequence $ 0 \lo \Hom_R(T , K) \overset{\times x}{\lo} \Hom_R(T , K) \lo \Ext^1_R(T/xT , K) \lo 0$. Therefore $\fm$ annihilates $\Hom_R(T , K)/x\Hom_R(T , K)$ and so $\Hom_R(T , K)$ is Ulrich. For converse, just note that $\Hom_R(\Hom_R(T , K) , K) \cong K$. 
	
	(b). Set $r = \redu(R)$. Applying $\Hom_R( - , K)$ over $0 \lo R \overset{\phi}{\lo} K^t \lo D \lo 0$ gives the exact sequence $0 \lo \Hom_R(D , K) \lo R^t \lo K \lo 0$. Therefore $t \geq r$ and there is stable isomorphism $\Hom_R(D , K)\oplus R^r \cong \Omega^1K\oplus R^t$. As $R/\fm$ is infinite, we may choose $x \in \fm$ as a superficial element for $\Hom_R(D , K)$ which is also regular over $R$ and $\Omega^1K$. By tensor $R/xR$ in above isomorphism, we get the isomorphism 
	$$(\Hom_R(D , K)/x\Hom_R(D , K))\oplus (R^r/xR^r) \cong (\Omega^1K/x\Omega^1K)\oplus (R^t/xR^t).$$
	
	As $\Hom_R(D , K)$ is Ulrich, $\fm\Hom_R(D , K) = x\Hom_R(D , K)$. Hence, multiplying above sentences by $\fm$, gives the isomorphism
	
	$$\fm R^r/xR^r \cong (\fm\Omega^1K/x\Omega^1K)\oplus(\fm R^t/xR^t).$$   
	
	Therefore $t = r$, and $\Omega^1K \cong \Hom_R(D , K)$ is Ulrich. For final claim, let $\phi(1) \in K^r$. Applying $\Hom_R(D , K)$ over $0 \lo R \overset{\phi}{\lo} K^t \lo D \lo 0$ gives the exact sequence
	
	$$0 \lo \Hom_R(D , K) \lo \Hom_R(K^r , K) \overset{Hom_R(\phi,K)}{\lo} \Hom_R(R , K) \lo 0.$$
	
	Assume that $f \in \Hom_R(K^r , K)$ is a generator. Without loss of generality, we may assume that for all $a = (a_1, a_2, \ldots , a_r) \in K^r$ we have $f(a) = a_1$. Therefore $\Hom_R(\phi , K)(f)(1) = f(\phi(1)) = f\phi(1) \in \fm K$ which is a contradiction because $\Hom_R(\phi , K)$ is onto. Therefore $\phi(1) \notin \fm K^r$, and is a generator for $K^r$. Now, applying $R/\fm\otimes -$ over $0 \lo R \overset{\phi}{\lo} K^t \lo D \lo 0$ gives the exact sequence 
	
	$$ \Tor_1^R(R/\fm , D) \lo R/\fm \overset{\bar{\phi}}{\lo} K^r/\fm K^r \lo D/\fm D \lo 0.$$
	
	As $\phi(1) \notin \fm K^r$, $\bar{\phi}$ is not a zero function and so is one to one. Hence $\mu(D) = r\mu(K) - 1 = r^2 - 1$. 
\end{proof}

\begin{thm}\label{Uprop}
	Assume that $(R,\fm)$ is a $d$-dimensional $(d>0)$ Cohen-Macaulay local ring with infinite residue field, and that $R$ admits a canonical module $K$. The following conditions are equivalent.
	\begin{itemize}
		\item [(i)] $R$ has minimal multiplicity.
		\item [(ii)] $\Omega^1(K)$ is Ulrich.
		\item [(iii)] There exist natural number $t$ and exact sequence $0 \lo R \lo K^{t} \lo D \lo 0$ such that $D$ is maximal Cohen-Macaulay Ulrich module.
	\end{itemize} 
\end{thm}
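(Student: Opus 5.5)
I would first reduce to dimension one. Choose a superficial sequence $x_1,\dots,x_{d-1}$ for $R$ (possible since $R/\fm$ is infinite), regular on $R$, $K$, $\Omega^1K$, and on every maximal Cohen--Macaulay module in sight (finitely many modules, so a general choice works). Set $\bar R=R/(x_1,\dots,x_{d-1})$. The following facts are preserved under passing to $\bar R$:
\begin{itemize}
\item minimal multiplicity, since $\e_{\fm}^0$ and $\redu$ are unchanged;
\item Ulrichness of maximal Cohen--Macaulay modules, since $\mu$ and $\e^0_{\fm}$ are unchanged modulo a superficial regular sequence;
\item the canonical module, since $K/\underline{x}K$ is the canonical module of $\bar R$;
\item the syzygy, since $\Omega^1_R(K)\otimes\bar R\cong\Omega^1_{\bar R}(\bar K)$ by tensoring the minimal presentation $0\to\Omega^1K\to R^{\redu(R)}\to K\to0$, which stays exact and minimal;
\item short exact sequences of maximal Cohen--Macaulay modules.
\end{itemize}
So all three conditions can be tested over $\bar R$, and I assume $d=1$.

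$(iii)\Rightarrow(ii)$ is exactly Lemma~\ref{Ulem}(b).

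$(ii)\Rightarrow(iii)$. Dualize the minimal presentation $0\to\Omega^1K\to R^r\to K\to0$, with $r=\redu(R)$, into $K$. Since $\Ext^1_R(K,K)=0$ and $\Hom_R(K,K)\cong R$, this gives $0\to R\to K^r\to\Hom_R(\Omega^1K,K)\to0$. Take $D=\Hom_R(\Omega^1K,K)$. It is maximal Cohen--Macaulay, and it is Ulrich by Lemma~\ref{Ulem}(a). This yields (iii) with $t=r$.

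$(i)\Leftrightarrow(ii)$ is the main step. Choose a superficial $x$ with $\fm^2=x\fm$ in case (i). Reduce the sequence $0\to\Omega^1K\to R^r\to K\to0$ modulo $x$; it stays exact, and since it is minimal, $\Omega^1K/x\Omega^1K\subseteq \fm(R/xR)^r$. Lengths give
\[
\e(\Omega^1K)=r\,\e(R)-\e(K)=(r-1)\e(R),
\]
while $\mu(\Omega^1K)=r\mu(K)-\ell(K/\fm K)+\ldots$, which is awkward. The cleaner route is to compute in the Artinian ring $A=R/xR$, where $\overline{\Omega^1K}=\Omega^1_A(\omega_A)\subseteq\fm_A^{\oplus r}$. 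For (i)$\Rightarrow$(ii): $\fm_A^2=0$, so $\fm_A$ annihilates $\Omega^1_A(\omega_A)$, and a module over $R$ whose reduction modulo $x$ is killed by $\fm$ is Ulrich. For (ii)$\Rightarrow$(i): $\fm_A\cdot\Omega^1_A(\omega_A)=0$. Now $\Omega^1_A(\omega_A)$ contains $\fm_A\cdot(\text{minimal generators})$, and $\omega_A$ is faithful, so this forces $\fm_A^2\omega_A=0$, hence $\fm_A^2=0$. Here I would use that $\Ann\,\omega_A=0$ and that $\fm_A\omega_A$ is the image of $\fm_A^{\oplus r}$. To make this precise, write $0\to\Omega\to A^r\to\omega_A\to0$. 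If $\fm_A^2\omega_A\neq0$, take $a,b\in\fm_A$ and a generator $e_i$ with $ab\,e_i\neq0$ in $\omega_A$. A length count of $\fm\Omega$ against $\fm^2A^r$ would then produce an element of $\fm\Omega$ not in the zero module. I would instead argue via lengths: $\ell(\Omega)=r\ell(A)-\ell(\omega_A)=(r-1)\ell(A)$ and $\mu(\Omega)=\ell(\Omega/\fm\Omega)$, so Ulrich means $\mu(\Omega)=(r-1)\e(R)$. Combining this with $\Omega\subseteq\fm A^r$ and $\Omega\supseteq\fm^2A^r$...

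I expect the hard step to be exactly this: deducing $\fm_A^2=0$ from $\fm_A\Omega^1_A(\omega_A)=0$. My first attempt would use Matlis duality. We have $\Hom_A(\Omega,\omega_A)\cong$ the cokernel of $A\to\omega_A^r$, which is $\bar D$. Ulrichness of $\Omega$ transfers to $D$ being killed by $\fm$ after reduction. Then $\fm\omega_A^r\subseteq$ the image of $A$, which is cyclic, so $\fm\omega_A^{r}$ is a cyclic module. If $r\ge2$ this forces $\fm\omega_A=0$, which gives $\fm^2=0$. If $r=1$, then $A$ is Gorenstein and $\fm\omega_A\cong\fm\subseteq A\cdot u$ with $u$ a unit multiple, so $\fm=\fm u$ trivially; this case needs care and would be handled by direct computation of $\Omega=0$.
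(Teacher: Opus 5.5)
Your reduction to $d=1$, your equivalence (ii)$\Leftrightarrow$(iii) (dualizing the minimal presentation, then Lemma~\ref{Ulem}), and your proof of (i)$\Rightarrow$(ii) are fine. The last is actually a more direct route than the paper's (i)$\Rightarrow$(iii), which counts $\mu(D)=r^2-1$ against $e(D)=(r-1)e$; you only need $\overline{\Omega^1K}\subseteq\fm_A A^r$ and $\fm_A^2=0$.

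The gap is the return to (i). Your length attempts are left unfinished. The inclusion $\Omega\supseteq\fm^2A^r$ is unjustified, and $\mu(\Omega^1K)$ is not available in general, as the paper itself remarks. Your Matlis-duality step is wrong. From $\fm_A\omega_A^r\subseteq\phi(A)\cong A$ you cannot conclude that $\fm_A\omega_A^r$ is cyclic, because submodules of $A$ are ideals, and ideals need not be cyclic. Worse, the conclusion $\fm_A\omega_A=0$ is false whenever (ii) holds with $r\ge 2$. Since $\omega_A$ is faithful, $\fm_A\omega_A=0$ would force $A$ to be a field. Concretely, for $R=k[[t^3,t^4,t^5]]$ (minimal multiplicity, $e=3$, $r=2$) one has $\ell(\fm_A\omega_A)=e-r=1$. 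So your argument proves too much.

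The missing ingredient is that $\phi(1)\notin\fm_A\omega_A^r$. Dualizing the minimal presentation $A^r\to\omega_A$, $\varepsilon_i\mapsto g_i$, gives $\phi(1)=(g_1,\dots,g_r)$; this is the content of Lemma~\ref{Ulem}(b). Hence $\phi(A)\cap\fm_A\omega_A^r=\phi(\fm_A)$, and your inclusion upgrades to the equality $\fm_A\omega_A^r=\phi(\fm_A)\cong\fm_A$. Comparing lengths gives $r(e-r)=e-1$, i.e.\ $(r-1)(e-r-1)=0$. This is exactly the paper's equation $r^2-1=\mu(D)=e(D)=(r-1)e$ in (iii)$\Rightarrow$(i). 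Passing to $D$ is what makes the count possible, because $\mu(D)$ is computable while $\mu(\Omega^1K)$ is not.

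This equation yields $e=r+1$, hence minimal multiplicity, only when $r\ge 2$. The case $r=1$ cannot be settled by computing $\Omega=0$. For Gorenstein $R$ one has $\Omega^1K=0$, so (ii) carries no information: for example, $k[[t^3,t^4]]$ has $e=3$ and does not have minimal multiplicity. The paper's proof also tacitly divides by $r-1$. So the equivalence genuinely requires $R$ to be non-Gorenstein, or a convention about the zero module.
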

\begin{proof} Without loss of generality, we may assume that $\dim(R) = 1$. 
	Set $r = \redu(R)$ and $e = \e_{\fm}^0(R)$. Applying $\Hom_R(- , K)$ over exact sequence $0 \lo \Omega^1K \lo R^r \lo K \lo 0$ 	gives the exact sequence $0 \lo R \overset{\phi}{\lo} K^r \lo \Hom_R(\Omega^1K , K) \lo 0$. 
	
	(i)$\Rightarrow$(iii). By Lemma \ref{Ulem} (b), $\phi(1)$ is a generator of $K^r$ and $\mu(\Hom_R(\Omega^1K , K)) = r^2 - 1$. On the other hand, $\e_{\fm}^0(\Hom_R(\Omega^1K ,  K)) = re - e = e(r - 1) = (r+1)(r-1) = r^2 - 1$ since $R$ has minimal multiplicity by assumption. Therefore $\Hom_R(\Omega^1K , K)$ is Ulrich and the result follows.

	(iii)$\Rightarrow$(i). By Lemma \ref{Ulem} (b), $t = r$ and $\mu(D) = r^2 - 1$. As $D$ is Ulrich, we have $r^2 - 1 = \mu(D) = \e_{\fm}^0(D) = e(r - 1)$ which means $e = r+1$ and so $R$ has minimal multiplicity. 
	
	(ii)$\Leftrightarrow$(iii).  By Lemma \ref{Ulem}(a), for a maximal Cohen-Macaulay module $T$, $T$ is Ulrich if and only if $\Hom_R(T , K)$ is Ulrich. Now, for (ii)$\Rightarrow$(iii)  apply $\Hom_R( - , K)$ over $0 \lo \Omega^1K \lo R^r \lo K \lo 0$. For (iii)$\Rightarrow$(ii), as by Lemma \ref{Ulem} (b) we have $t = r$, applying $\Hom_R( - , K)$ over $0 \lo R \lo K^t \lo D \lo 0$ gives the result.
\end{proof}

As we see, when the equivalent condition in Theorem \ref{Uprop} happens, $\mu(\Omega^1K) = r^2 - 1$. It is not known that, if this result holds, when $R$ do not have minimal multiplicity.

We end the paper with a Proposition about rings with canonical reductions having an $\fm$-primary ideal which is Ulrich as an $R$-module. Recall that a one-dimensional local ring  $R$ is called having canonical reduction if there exists canonical ideal for $R$ which is also a reduction for $\fm$, see \cite{CanRed} for more conventions. First we prove a lemma. Recall that if $R$ has a canonical ideal $K$, then $\rho(R) = \re(K)$ which is independent from choosing canonical ideal. $R$ is Gorenstein if and only if $\rho(R)=1$. Next interesting case is when $\rho(R) =2$. Remark\cite[Remark 3.4]{CanRed} would give a good background about properties of such rings.

\begin{prop}
	Assume that $(R,\fm)$ is a one-dimensional Cohen-Macaulay local ring admitting a canonical reduction $K$. Assume that there exists an ideal $I$ of $R$, such that $I$ is Ulrich as an $R$-module. 
	\begin{itemize}
			\item[(i)] If $y\in I\cap K$ be a non-zero-divisor then $(yK:I)=x(y:I)$. In particular $(yK:I)$ and $(y:I)$ are both Ulrich.
		\item[(ii)]  $\rho(R)=2$.
		\item[(iii)] $I$ is reflexive $R$-module.
	\end{itemize}
\end{prop}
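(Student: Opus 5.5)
The plan is to fix a minimal reduction $x\in K$ of $K$. Since $K$ is a reduction of $\fm$, $x$ is also a minimal reduction, i.e. a superficial element, for $\fm$. Because $I$ is Ulrich, $\fm I=xI$. Then $xI\subseteq KI\subseteq \fm I=xI$, so
$$KI=xI. \qquad (\ast)$$
This single identity should drive all three parts. Everything will be computed inside the total ring of fractions $Q(R)$, where $y$, $x$ and $I$ are invertible or contain non-zero-divisors.

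For (i), I would use $(K:K)=R$ together with $(\ast)$:
$$(R:I)=((K:K):I)=(K:KI)=(K:xI)=x^{-1}(K:I),$$
so $(K:I)=x(R:I)$. Since $y\in I$ is a non-zero-divisor, $(y:I)=y(R:I)$ and $(yK:I)=y(K:I)$. Multiplying by $y$ gives $(yK:I)=x(y:I)$. For the Ulrich assertions: $(K:I)\cong\Hom_R(I,K)$ is Ulrich by Lemma~\ref{Ulem}(a). The modules $(yK:I)=y(K:I)$ and $(y:I)=x^{-1}y(K:I)$ are isomorphic to it via multiplication by non-zero-divisors, and being Ulrich is an isomorphism invariant, so both are Ulrich.

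For (ii), from $(\ast)$ the fractional ideal $x^{-1}K$ satisfies $(x^{-1}K)I=I$, so $x^{-1}K\subseteq B:=(I:I)$, a finite birational extension of $R$. Hence $x^{-n}K^n\subseteq B$ for all $n$. I would try to combine this with the standard description of rings with canonical reduction from \cite[Remark 3.4]{CanRed}, namely that $\rho(R)=\re(K)$ and that $\re(K)\le 2$ is equivalent to $K^3=xK^2$, equivalently to $x^{-1}K$ generating a ring of a specific shape. The aim is to show that $x^{-1}K\cdot x^{-1}K$ already equals the ring generated by $x^{-1}K$, which gives $K^3=xK^2$. To exclude $\rho(R)=1$, I would observe that if $K=xR$ then $\fm^{n+1}=x\fm^n$ with $R$ Gorenstein, which forces a degenerate case; this presumably has to be ruled out by the standing hypotheses of \cite{CanRed}. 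I expect (ii) to be the main obstacle. The inclusion $x^{-1}K\subseteq (I:I)$ alone does not obviously bound the reduction number, so I would have to use the canonical-module duality, e.g. $(K:(I:I))=(K:I)I$, to pin down $x^{-2}K^2$.

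For (iii), I would compute $(R:(R:I))$ using (i):
$$(R:(R:I))=(R:x^{-1}(K:I))=x\,((K:K):(K:I))=x\,(K:K(K:I)).$$
Then $K(K:I)=xK(R:I)$ by (i), so $(R:(R:I))=(K:K(R:I))$. Duality gives $(K:(K:J))=J$ for fractional ideals $J$. Therefore it suffices to show $K(R:I)=(K:I)$, i.e. $K(R:I)=x(R:I)$. This is the analogue of $(\ast)$ for the Ulrich ideal $(R:I)\cong(y:I)$ from (i), and it follows exactly as $(\ast)$ did. That yields $(R:(R:I))=(K:(K:I))=I$, so $I$ is reflexive.
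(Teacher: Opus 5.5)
Your parts (i) and (iii) are correct. In (i) you work with fractional ideals, $(R:I)=((K:K):I)=(K:KI)=(K:xI)=x^{-1}(K:I)$. This is shorter than the paper's element chase and does not even use $y\in K$. Your (iii) is the paper's computation $I=(yK:(yK:I))=(yK:x(y:I))=(yK:K(y:I))=(y:(y:I))$, divided by $y$.

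The gap is (ii). You give no argument, and the goal you set cannot be reached. Since $\fm^n$ is Ulrich for $n\ge\re(\fm)$ (Remark~\ref{1Rem}(1)), the hypothesis on $I$ is automatic, so (ii) is a claim about every ring with a canonical reduction.

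Your target $K^3=xK^2$ fails for $R=k[[t^4,t^5,t^{11}]]$:
\begin{itemize}
\item the canonical fractional ideal is $R+Rt$ (pseudo-Frobenius numbers $6,7$);
\item so $K=(t^4,t^5)$ is a canonical ideal containing the minimal reduction $x=t^4$ of $\fm$, hence a canonical reduction;
\item yet for $L=x^{-1}K=R+Rt$ one has $t^3\in L^3\setminus L^2$, so $K^3\neq xK^2$ and $x^{-1}K\cdot x^{-1}K$ is not a ring.
\end{itemize}
Your worry about $\rho(R)=1$ is also justified. The ring $k[[t^2,t^3]]$ with $K=t^2R$ and $I=\fm$ satisfies every hypothesis and is Gorenstein, so $\rho(R)=1$ by the paper's own convention. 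No argument can give the equality $\rho(R)=2$ there.

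What the paper actually derives is a containment. Take $y\in I$ a non-zero-divisor and set $I'=x^{-1}yK\subseteq I$; this is possible since $yK\subseteq KI=xI$. Then $I'$ is a canonical ideal containing $y$ with $(y:I')=(x:K)$. From $I'K\subseteq KI=xI\subseteq xR$ the paper gets $I'\subseteq(y:I')$, i.e.\ $I'^2\subseteq yR$, and reads this as $\rho(R)=2$ via the conventions of \cite{CanRed}.

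You were one line from this containment. Your identity $x^{-2}K^2I=I$ gives $yx^{-2}K^2\subseteq I$, hence $I'^2=y\cdot yx^{-2}K^2\subseteq yI\subseteq yR$. So the missing step is not to show that the ring generated by $x^{-1}K$ equals $x^{-2}K^2$; that is false in general. It is to translate $I'^2\subseteq yR$ into the invariant $\rho$ as \cite{CanRed} defines it.

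Whatever that definition is, the Gorenstein example shows only an inequality can come out of it. The first example shows that this inequality cannot be $\re(K)\le 2$ for the ordinary reduction number.
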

\begin{proof}
		
	$(i)$. First note that $x(y:I)I=xy\Tr(I)\subseteq yK$, and so $x(y:I)\subseteq (yK:I)$. For the other inclusion, note that $(yK:I)=(xyK:xI)=(xyK:KI)$. Let $rKI\subseteq xyK$. As $\fm I=xI$ there exists $T\subseteq I$ such that $rI=xT$. Therefore $rKI=xTK\subseteq xyK$ which means $T \subseteq (yK:K) =yR$. Let $T=yT'$ for some ideal $T'$ of $R$. As $y\in I$ and $rI=xT=xyT'$ there exists $t'\in T'$ such that $ry=xyt'$ which means $r=xt'$. Therefore $(yK:I)\subseteq xR$. To complete the proof, we must show that $t'\in(y:I)$. As $r=xt'\in(yK:I)$ one has $t'KI=t'xI\subseteq yK$ and so $t' \in (yK:KI) =(y:I)$.
	
	$(ii)$.First assume that $I\subseteq K$ and choose $y\in I$ as a non-zero-divisor. By $(ii)$ we have $y\in (yK:I)=x(y : I)$ which means $y\in xR$. As $I$ can be generated by regular elements one may conclude that $I\subseteq xR$. Therefore, as by Lemma\ref{Lemx} there exists ideal which contains $I$ and not contained by $xR$ which is Ulrich as $R$-module,    
	we may assume hat $I \nsubseteq K$.  Note that $\fm=(K:I)$ as $I\nsubseteq K$. Hence $(K:\fm)=I+K$; and if $\fm^{n+1}=K\fm^n$ we got $KI=xI$. Now, if $y\in I$ be non-zero-divisor then there exists $I' \subseteq I$ such that $yK=xI'$ and so $I'$ is a canonical ideal of $R$ and $y\in I'$. By the method of choosing $I'$, we have $(x:K)=(y:I')$. As $I'K\subseteq IK\subseteq xR$ we get $I'\subseteq y:I'$ which means $\rho(R)=2$.
	
	$(iii)$. Let $y\in I$ be as in $(i)$. Then:
	\begin{center}
		$I=(yK:(yK:I) ) = (yK:x(y:I) ) = (yK : K(y:I) ) = (y:y:I)$.
	\end{center}
\end{proof}
Next is a characterization for rings with canonical reductions, whom admit $\fm$-primary ideal which are Ulrich as $R$-module, to be almost Gorenstein. Note that almost Gorenstein rings and nearly Gorenstein rings have canonical reduction, see\cite[Corollary 3.10]{CanRed}.
\begin{prop}
	Assume that $(R,\fm)$ is a one-dimensional Cohen-Macaulay local ring admitting a canonical reduction $K$. Assume that there exists an ideal $I$ of $R$, such that $I$ is Ulrich as an $R$-module. The followings are equivalent
	\begin{itemize}
		\item[(i)] If $K\subseteq I$, 
	\item[(ii)] $R$ is almost Gorenstein, 
	\item[(iii)] $I=\fm$. 
	
	\end{itemize} 
	When this is the case $R$, has minimal multiplicity and $\l(R/K) =2$.
	
\end{prop}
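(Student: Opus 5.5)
The plan is to prove the three implications (iii)$\Rightarrow$(ii)$\Rightarrow$(i)$\Rightarrow$(iii), and then to read off the final assertions. Throughout we fix a superficial element $x\in\fm$ with $K$ a reduction of $\fm$, so that $\fm^{n+1}=K\fm^n$ and $\fm I=xI$. By Remark~\ref{1Rem}(4), $I$ is $\fm$-full, i.e. $(\fm I:\fm)=I$. We also use the standing facts from \cite{CanRed}:
\begin{itemize}
\item $R$ is almost Gorenstein if and only if $\fm K\subseteq R$-type containment holds, namely $\fm K=\fm x$ up to the choice of canonical ideal, equivalently $\fm\subseteq(K:\fm)\cdot$ stuff;
\item in the canonical reduction setting, $R$ is almost Gorenstein if and only if $\fm=(K:\fm)\cap\fm$, equivalently $\ell(R/K)\le 2$ after normalising $K\subseteq\fm$ with $x\in K$.
\end{itemize}

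\emph{(iii)$\Rightarrow$(ii).} If $I=\fm$ is Ulrich, then $R$ has minimal multiplicity, $\fm^2=x\fm$. Since $K$ is a reduction of $\fm$ containing $x$ (after the usual normalisation), we get $\fm^2\subseteq K\fm$. From this, the almost Gorenstein criterion $\fm K\subseteq K$ together with $\fm(K:\fm)\subseteq K$ should follow. Concretely, I would show that $(K:\fm)=K+\fm$, using Remark~\ref{1Rem}(7): $\fm$ is Ulrich, so $(K:\fm)$ is Ulrich. Then I would apply the argument of part (ii) of the previous Proposition, which gave $(K:\fm)=I+K$ when $I\not\subseteq K$.

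\emph{(ii)$\Rightarrow$(i).} If $R$ is almost Gorenstein, then $\fm\subseteq(K:\fm)$ fails only in the Gorenstein case. By \cite{AGR}, non-Gorenstein almost Gorenstein rings satisfy $\fm(K:\fm)\subseteq K$. This forces any Ulrich ideal $I$, which satisfies $I=\fm(I:\fm)$ after Lemma~\ref{Lemx}, to sit inside $(K:\fm)$ and to contain $K$. I would use Lemma~\ref{Lemx} to replace $I$ by an Ulrich ideal not contained in $xR$, and then compare generators via Proposition~\ref{Lemtype}.

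\emph{(i)$\Rightarrow$(iii).} Assume $K\subseteq I$. By Remark~\ref{1Rem}(7), $(K:I)$ is Ulrich. Since $K\subseteq I$, we have $(K:I)\subseteq(K:K)=R$, and $(K:I)\subseteq\fm$ unless $I=K$. Note that $I=K$ is impossible: an Ulrich canonical ideal would make $\mu(K)=\redu(R)=e$, contradicting Lemma~\ref{FirstLemma} unless $R$ is regular. The main step is then to show $(K:I)$ contains a superficial element, so that Remark~\ref{1Rem}(6) gives $(K:I)=\fm$, whence $I=(K:\fm)$. Since $K$ is a reduction of $\fm$, $K$ contains a superficial element $x$. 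I would try to show $x\in(K:I)$, i.e. $xI\subseteq K$, using $xI=\fm I$ and the previous Proposition's identity $(yK:I)=x(y:I)$. This is the step I expect to be the main obstacle.

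Once $I=(K:\fm)$ is known to be Ulrich, Remark~\ref{1Rem}(7) gives that $\fm$ is Ulrich, so $\fm$ itself is an Ulrich ideal. Then $\fm$-fullness of $I$ together with $\fm I=\fm\fm$-type comparison should give $I=\fm$.

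\emph{Final assertions.} Minimal multiplicity holds since $\fm$ is Ulrich. For $\ell(R/K)=2$, I would use $\fm=(K:I)=(K:\fm)$ with $K\subsetneq\fm\subsetneq R$. The containment $(K:\fm)=\fm$ gives $\ell(\fm/K)=\ell(R/(K:\fm))$-type duality, which should force $\ell(R/K)=2$ via $\ell(R/K)=\ell((K:K)/(K:\fm))+\ell(\fm/K)$.
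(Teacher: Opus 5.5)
\textbf{The gap is (ii)$\Rightarrow$(i), and it cannot be repaired.} What you write there is not an argument. The inclusion $\fm(K:\fm)\subseteq K$ holds in every ring by the definition of the colon. Nothing explains why it would force $K\subseteq I$. Replacing $I$ by a larger Ulrich ideal via Lemma~\ref{Lemx} changes the ideal about which (i) is asserted.

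More seriously, for a fixed Ulrich ideal $I$, condition (ii) implies neither (i) nor (iii). By Remark~\ref{1Rem}(1), $\fm^n$ is Ulrich for $n\ge\re(\fm)$. For $n\ge 2$, however, $\fm^n\ne\fm$ and $K\not\subseteq\fm^n$, because $K$ contains an $x\in\fm\setminus\fm^2$ with $xR$ a reduction of $\fm$. Two concrete examples, with $k$ an infinite field:
\begin{itemize}
\item $R=k[[t^3,t^4,t^5]]$ is almost Gorenstein with canonical reduction $K=(t^3,t^4)$. The ideal $I=\fm^2=t^6k[[t]]$ satisfies $\fm I=t^3I$, so it is Ulrich, yet $t^3\notin I$ and $I\ne\fm$.
\item The Gorenstein ring $k[[t^3,t^4]]$ has canonical reduction $t^3R$, so (ii) holds, and $\fm^2$ is Ulrich. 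But $\fm$ is not Ulrich, so (iii) fails for every Ulrich ideal. Moreover $R$ does not have minimal multiplicity, and $\ell(R/t^3R)=3$.
\end{itemize}
The paper's own proof of (ii)$\Rightarrow$(iii) breaks at the corresponding point. It asserts $\fm=(x:I)$ without justification; in the first example $(t^3:I)=R$. Also, from $(xK:IK)=(xK:K^2)$ duality only yields $IK+xK=K^2$, not $IK=K^2$. So your cycle (iii)$\Rightarrow$(ii)$\Rightarrow$(i)$\Rightarrow$(iii) cannot close. What is true is (i)$\Leftrightarrow$(iii), and that these imply (ii), minimal multiplicity and $\ell(R/K)=2$.

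\textbf{The true parts are fixable but currently looser than necessary.}

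In (i)$\Rightarrow$(iii), the step you single out as the main obstacle is immediate. Since $x\in K$, we get $\fm I=xI\subseteq K$, so $\fm\subseteq(K:I)$; in particular $x\in(K:I)$. This is exactly how the paper starts. Since $(K:I)\ne R$ unless $I=K$ (which forces $R$ regular), we get $(K:I)=\fm$. Your route through Remark~\ref{1Rem}(6),(7) then goes through. To finish:
\begin{itemize}
\item $\fm=(K:I)$ is Ulrich by Remark~\ref{1Rem}(7), so $\fm^2=x\fm\subseteq K$.
\item Hence $\fm\subseteq(K:\fm)=(K:(K:I))=I$, so $I=\fm$.
\item Duality gives $\ell(\fm/K)=\ell(R/(K:\fm))=1$, i.e.\ $\ell(R/K)=2$.
\end{itemize}

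For (iii)$\Rightarrow$(ii), the two ``criteria'' you list are garbled or tautological, so nothing yet connects minimal multiplicity with almost Gorensteinness. You have two options:
\begin{itemize}
\item cite \cite[Proposition 3.8(a)]{CanRed}, as the paper does; or
\item use the Goto--Matsuoka--Phuong characterization: $R$ is almost Gorenstein iff $\fm K\subseteq xR$ for some $x\in K$ generating a reduction of $K$. This follows at once from $\fm K\subseteq\fm^2=x\fm$.
\end{itemize}
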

\begin{proof}Let $x\in K$ be a superficial element for $\fm$ and so $\fm I = xI$.
	
	$(i)\Rightarrow(ii)$. If $K\subseteq I$ then, as $\fm I=xI\subseteq K$, we have $\mu(\fm)=\mu(K:I)=\redu(I)=\e_{\fm}^0(R)$ and so $\fm$ is Ulrich and has minimal multiplicity. Therefore by \cite[Proposition 3.8(a)]{CanRed} $R$ is almost Gorenstein. 
	
$(ii)\Rightarrow(iii)$. Let $R$ be almost Gorenstein. Therefore $(x:K)=\fm$ and $K^2=x(x:\fm)$, see \cite[Remark 3.4]{CanRed}. Now, as $\fm=(x:I)=(x:K)$ we got $(xK:IK)=(xK:K^2)$ and so, as $xK$ is Gorenstein ideal, $IK=K^2$. Hence $IK=x(x:\fm)$. Multiplying by $\fm$, we get $x^2I=xIK=\fm IK=x\fm(x:\fm)=x^2\fm$ which means $I=\fm$.

$(iii)\Rightarrow(i)$ Is obvious.	
 
 	Finally, when this is the case, as $I=\fm$ we have $R$ has minimal multiplicity and so, again by \cite[Proposition 3.8(a)]{CanRed}, one has $\l(R/K) = 2$. 
\end{proof}

There are some questions, arising from discussions of this paper, for author, which have  no answer for now.

\begin{ques}
\emph{	Assume that $(R,\fm)$ is a one-dimensional Cohen-Macaulay local ring. If $\fm = \Gamma_R$ then, is $\fm$ Ulrich as an $R$-module?   }
\end{ques}

\begin{ques}
	\emph{	Assume that $(R,\fm)$ is a one-dimensional Cohen-Macaulay local ring. As we see in Theorem\ref{Uprop}, having minimal multiplicity can be formulate by Ulrichness of $\Omega^1(K)$ where $K$ is the canonical module of $R$. Therefore $\Tr_R(\Omega^1(K))$ is Ulrich this case. The question is, what can one say about $\Tr_R(\Omega^1(K))$? Is it equal to $\Gamma_R$? Is it equal to $\fm$ or $\Gamma_R$?   }
\end{ques}




\begin{thebibliography}{15}




\bibitem{BHU}
J. P. Brennan, J. Herzog and B. Ulrich, \emph{ Maximally generated Cohen–Macaulay modules}, Math. Scand. 61 (1987), 181–203.



\bibitem{GOTWY1}
S. Goto, K. Ozeki, R. Takahashi, K.-I. Watanabe, K.-I. Yoshida, \emph{Ulrich ideals and modules}, Math. Proc. Cambridge
Philos. Soc. 156 (2014), no. 1, 137–166.


\bibitem{GOTWY2}
S. Goto, K. Ozeki, R. Takahashi, K.-I. Watanabe and K.-I. Yoshida, \emph{Ulrich ideals and modules over two-dimensional rational singularities}, Nagoya Math. J. 221 (2016), no. 1, 69–110.

\bibitem{AGR}
S. Goto, R. Takahashi, and N. Taniguchi, \emph{Almost Gorenstein rings --towards a theory of higher dimension--}, J. Pure Appl. Algebra, 219 (2015), 2666-2712




\bibitem{KT}
T. Kobayashi and R. Takahashi, \emph{Ulrich modules ove Cohen-Macaulay local rings with minimal multiplicity}, The Quarterly Journal of Mathematics, Volume 70, Issue 2, June 2019, Pages 487–507

\bibitem{NT}
Saeed Nasseh, Sean Sather-Wagstaff, Ryo Takahashi and Keller VandeBogert,  \emph{Applications and homological properties of local rings with decomposable maximal ideals}, Journal of Pure and Applied Algebra, Volume 223, Issue 3, 2019, Pages 1272-1287, ISSN 0022-4049,

\bibitem{CanRed}
M. Rahimi, \emph{Rings with canonical reductions}, Bulletin of the Iranian Mathematical Society. 46. 1801-1825. 
\end{thebibliography}
\end{document}